\setlist[itemize]{topsep=0ex,itemsep=0ex,parsep=0ex}
\setlist[enumerate]{topsep=0ex,itemsep=0ex,parsep=0ex}
\crefname{lem}{Lemma}{Lemmas}
\crefname{thm}{Theorem}{Theorems}
\crefname{prop}{Proposition}{Propositions}
\crefname{cor}{Corollary}{Corollaries}
\newcommand{\defn}[1]{\textcolor{Maroon}{\emph{#1}}}
\newcommand{\bigchi}{\raisebox{1.55pt}{\scalebox{1.2}{\ensuremath\chi}}}
\newcommand{\cchi}{\bigchi_{\star}\hspace*{-0.2ex}}
\newcommand{\dchi}{\bigchi\hspace*{-0.1ex}_{\Delta}\hspace*{-0.2ex}}
\def\NAT@spacechar{~}
\renewcommand{\baselinestretch}{1.09}
\DeclarePairedDelimiter{\floor}{\lfloor}{\rfloor}
\DeclarePairedDelimiter{\ceil}{\lceil}{\rceil}
\renewcommand{\epsilon}{\varepsilon}
\renewcommand{\emptyset}{\varnothing}
\renewcommand{\geq}{\geqslant}
\renewcommand{\leq}{\leqslant}
\DeclareMathOperator{\dist}{dist}
\DeclareMathOperator{\col}{col}
\DeclareMathOperator{\tw}{tw}
\DeclareMathOperator{\fvn}{fvn}
\DeclareMathOperator{\pw}{pw}
\DeclareMathOperator{\td}{td}
\DeclareMathOperator{\cir}{circ}
\DeclareMathOperator{\had}{had}
\newcommand{\GG}{\mathcal{G}}
\newcommand{\NN}{\mathbb{N}}
\newcommand{\TT}{\mathcal{T}}
\renewcommand{\thefootnote}{\fnsymbol{footnote}}
\theoremstyle{plain}
\newtheorem{thm}{Theorem}
\newtheorem{lem}[thm]{Lemma}
\newtheorem{cor}[thm]{Corollary}
\newtheorem{prop}[thm]{Proposition}
\crefname{obs}{Observation}{Observations}
\newtheorem*{lem*}{Lemma}
\theoremstyle{definition}
\newtheorem*{conj*}{Conjecture}
\begin{document}
\title{\bf\boldmath\fontsize{18pt}{18pt}\selectfont Defective and Clustered Colouring\\ of Graphs with Given Girth}

\author{%
Marcin~Bria\'nski\,\footnotemark[3] \qquad
Robert~Hickingbotham\,\footnotemark[2] \qquad 
David~R.~Wood\,\footnotemark[1]
}

\maketitle

\begin{abstract}
The \defn{defective chromatic number} of a graph class $\GG$ is the minimum integer $k$ such that for some integer $d$, every graph in $\GG$ is $k$-colourable such that each monochromatic component has maximum degree at most $d$. Similarly, the \defn{clustered  chromatic number} of a graph class $\GG$ is the minimum integer $k$ such that for some integer $c$, every graph in $\GG$ is $k$-colourable such that each monochromatic component has at most $c$ vertices. This paper determines or establishes bounds on the defective and clustered chromatic numbers of  graphs with given girth in minor-closed classes defined by the following parameters: Hadwiger number, treewidth, pathwidth, treedepth, circumference, and feedback vertex number. One striking result is that for any integer $k$, for the class of triangle-free graphs with treewidth $k$, the  defective chromatic number, clustered chromatic number and chromatic number are all equal. The same result holds for graphs with treedepth $k$, and generalises for graphs with no $K_p$ subgraph. 
%We also show, via a result of K{\"u}hn and Osthus~[2003], that $K_t$-minor-free graphs with girth $g\geq 5$ are properly $O(t^{c_g})$ colourable, where $c_g\in(0,1)$ with $c_g\to 0$, thus asymptotically improving on Hadwiger's Conjecture. 
\end{abstract}

\footnotetext[3]{Department of Theoretical Computer Science, Jagiellonian University, Kraków, Poland (\texttt{marcin.brianski@doctoral.uj.edu.pl}). M.~Bria\'nski was partially supported by a Polish National Science Center grant (BEETHOVEN; UMO-2018/31/G/ST1/03718).}

% \footnotetext[2]{School of Mathematics, Monash University, Melbourne, Australia (\texttt{\{robert.hickingbotham, david.wood\}@monash.edu}). Research of Wood supported by the Australian Research Council and by NSERC. Research of Hickingbotham supported by Australian Government Research Training Program Scholarship.}

\footnotetext[2]{D\'epartement d'Informatique, Universit\'e libre de Bruxelles, Belgium ({\tt robert.hickingbotham@ulb.be}). This research was completed when R.\ Hickingbotham was supported by Australian Government Research Training Program Scholarship.}

\footnotetext[1]{School of Mathematics, Monash University, Melbourne, Australia (\texttt{david.wood@monash.edu}). Research of Wood supported by the Australian Research Council and by NSERC.}

 \newpage
 %\tableofcontents
 %\newpage
 
\section{Introduction}
\label{Introduction}
\renewcommand{\thefootnote}{\arabic{footnote}}

This paper studies improper colourings of graphs in minor-closed classes with given  girth. First, we give the essential definitions. A \defn{colouring} of a graph\footnote{We consider simple, finite, undirected graphs~$G$ with vertex-set~${V(G)}$ and edge-set~${E(G)}$. See latter sections and \citep{Diestel5} for graph-theoretic definitions not given here.}  $G$ is a function that assigns one colour to each vertex of $G$. For an integer $k\geq 1$, a \defn{$k$-colouring} is a colouring using at most $k$ colours. A colouring of a graph is \defn{proper} if each pair of adjacent vertices receives distinct colours. The \defn{chromatic number $\bigchi(G)$} of a graph $G$ is the minimum integer $k$ such that $G$ has a proper $k$-colouring. For a graph class $\GG$, the \defn{chromatic number $\bigchi(\GG)$} is $\max\{\bigchi(G):G\in\GG\}$, and is \defn{unbounded} if the maximum does not exist. A \defn{monochromatic component} with respect to a colouring of a graph $G$ is a connected component of the subgraph of $G$ induced by all the vertices assigned a single colour. A colouring has \defn{defect} $d$ if every monochromatic component has maximum degree at most $d$. A colouring has \defn{clustering} $c$ if every monochromatic component has at most $c$ vertices. Note that a colouring with defect 0 or clustering 1 is precisely a proper colouring. The \defn{defective chromatic number $\dchi(\GG)$} of a graph class $\mathcal{G}$ is the minimum integer $k$ for which there exists an integer $d$ such that every graph in  $\GG$ is $k$-colourable with defect $d$, and is \defn{unbounded} if no such $k$ exists. The \defn{clustered chromatic number $\cchi(\GG)$} of a graph class $\mathcal{G}$ is the minimum integer $k$ for which there exists an integer $c$ such that every graph in  $\GG$ is $k$-colourable with clustering $c$, and is \defn{unbounded} if no such $k$ exists. 
By definition, $\dchi(\GG)\leq\cchi(\GG)\leq\bigchi(\GG)$. See \citep{WoodSurvey} for an extensive survey on this topic.

We consider defective and clustered colouring of graphs with given girth, where our aim is to minimise the number of colours rather than attain specific defect or clustering bounds\footnote{Other papers that focus on specific defect or clustering bounds for graphs of given girth include \citep{CE19,Skrekovski00,Skrekovski99a,Skrekovski99,CCJS17}.}. First note that for any integer $g$, the class of graphs with girth at least $g$ has unbounded defective chromatic number. To see this, suppose that for some $g$ and $k$, the class of graphs with girth at least $g$ has defective chromatic number at most $k$. That is, for some $d$, every graph $G$ with girth at least $g$ is $k$-colourable with defect at most $d$. Each monochromatic component is properly $(d+1)$-colourable, so $G$ is properly $k(d+1)$-colourable. This contradicts the result of \citet{Erdos59}, which states that graphs with girth $g$ have unbounded chromatic number. Hence the class of graphs with girth at least $g$ has unbounded defective chromatic number, and thus has unbounded clustered chromatic number. This negative result motivates studying defective and clustered colourings of graphs with given girth and satisfying some other structural assumptions. To obtain interesting results, it is not enough to just consider degeneracy, since  \citet[Property~6]{KosNes99} proved that for all $k,d,g$ there is a $k$-degenerate graph with girth at least $g$ that is not $k$-colourable with defect $d$. Thus chromatic number, defective chromatic number and clustered chromatic number of the class of $k$-degenerate graphs with girth $g$ equals $k+1$.

Here we focus on minor-closed graph classes. Defective and clustered colourings of graphs excluding a fixed minor are widely studied~\citep{DEMWW22,DEMW23,OOW19,Liu24,KM07,Norin15,vdHW18,LW1,LW2,LW3,LW4,Kawa08,Wood10,EKKOS15,LO18,KO19,DN17,NSSW19,NSW22,CE19}. We focus on the following minor-monotone parameters: 
treewidth $\tw(G)$, 
pathwidth $\pw(G)$, 
treedepth $\td(G)$, 
feedback vertex number $\fvn(G)$, 
circumference $\cir(G)$, 
and Hadwiger number $\had(G)$. 
See \cref{Definitions} for these well-known definitions. For now, note the following inequalities:
\begin{align}
    \had(G)-1 \leq \tw(G) & \leq \pw(G) \leq \td(G)-1     \label{Parameters}\\
    \had(G)-1 \leq \tw(G) & \leq \fvn(G)+1\\
    \had(G)-1 \leq \tw(G) & \leq \cir(G)-1.
\end{align}

\subsection{Defective Colouring of Bounded Treewidth Graphs}

Here we summarise results on defective colouring of minor-closed graph classes; refer to \cref{TableTreewidthDefective,TableMinorDefective}. Original results in this paper are indicated by $\ast$.

\begin{table}[!ht]
% \begin{center}
\caption{Defective chromatic number of bounded treewidth classes with respect to girth (ignoring lower order additive terms).}
\label{TableTreewidthDefective}
\centerline{
\begin{tabular}{c|ccccc}
\hline
girth & fvn $k$ & circumference $k$ & treedepth $k$ & pathwidth $k$ & treewidth $k$ \\
\hline
3 & $3^{\ast}$ & $\Theta(\log k)$ & $k$ & $k+1$ & $k+1$ \\
4 & $3^{\ast}$ & $\Omega(\sqrt{\log k})^{\ast}\dots O(\log k)$ & $\sqrt{2k}^{\ast}\dots\ceil{\frac{k+2}{2}}^{\ast}$ & $\sqrt{2k}^{\ast}\dots\ceil{\frac{k+3}{2}}^{\ast}$ &
$\ceil{\frac{k+3}{2}}^{\ast}$ \\
$\geq 5$ & 2 & 2 & 2 & 2 & 2 \\
\hline
\end{tabular}}
% \end{center}
\end{table}

First consider the case with no girth constraint (girth 3). Graphs with treewidth $k$ are $k$-degenerate and thus properly $(k+1)$-colourable. This implies that  graphs with pathwidth $k$ are $(k+1)$-colourable, and graphs with treedepth $k$ are properly $k$-colourable. Standard examples (see \Cref{sect:stdexamples} and \citep{WoodSurvey}) show that these bounds cannot be improved for defective or clustered colouring. \citet{MRW17} showed that graphs with circumference $k$ are $O(\log k)$-colourable with clustering $k$, and thus with defect $k-1$. The number of colours is tight up to the leading constant. Thus the defective and clustered chromatic number of the class of graphs with circumference $k$ is in $\Theta(\log k)$. Every graph $G$ with $\fvn(G)\leq k$ is 3-colourable with clustering $k$ and defect $k-1$: just properly 2-colour the underlying forest and give a third colour to each apex vertex. \cref{FVN1TriangleFreeNotDefectLowerBound} shows that three colours is best possible even for feedback vertex number 1. Thus for each integer $k\geq 1$, the clustered and defective chromatic number of the class of  graphs $G$ with $\fvn(G)\leq k$ equals 3.

Now consider the triangle-free case (girth 4). \citet{DK17} proved the following important result. 

\begin{thm}[\citep{DK17}]
\label{DK}
For any $k\in\NN$ the chromatic number of the class of triangle-free graphs with treewidth $k$ equals $\ceil{\frac{k+3}{2}}$.
\end{thm}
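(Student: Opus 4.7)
Let $c := \ceil{(k+3)/2}$. The theorem comprises a matching upper and lower bound, and I would attack each by induction.

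For the upper bound, that every triangle-free $G$ with $\tw(G) \leq k$ satisfies $\bigchi(G) \leq c$, I would induct on $|V(G)|$ using a rooted tree decomposition of $G$ of width at most $k$. Pick a leaf bag $B$ with separator $S = B \cap B'$ (where $B'$ is its parent bag) and introduced vertices $U = B \setminus S$, so $|B| \leq k+1$ and every vertex of $U$ has all its neighbours in $B$. The inductive hypothesis yields a proper $c$-colouring of $G - U$, and the task reduces to extending this colouring to $U$. Viewed as list colouring on the triangle-free graph $G[U]$, each $v \in U$ receives a list of size at least $c - |N(v) \cap S|$. I would split into cases on $|S|$: when $|S|$ is small the lists are large and a greedy argument on the triangle-free graph $G[U]$ (on at most $k+1$ vertices) suffices; when $|S|$ is large, so $|U|$ is small, a Hall-type deficiency argument — exploiting that the neighbourhood of any vertex of $B$ is independent because $G[B]$ is triangle-free — should close the extension.

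For the matching lower bound I would construct, for each $k$, a triangle-free graph $H_k$ with $\tw(H_k) \leq k$ and $\bigchi(H_k) = c$. Starting from $H_1 = K_2$ and $H_2 = C_5$, the natural recursive step is to build $H_k$ from $H_{k-2}$ by attaching a $C_5$-like gadget along two vertices in a way that simultaneously raises the chromatic number by one and the treewidth by two while preserving triangle-freeness. Verifying the exact values of $\bigchi(H_k)$ and $\tw(H_k)$ is a routine induction once the recursive construction is pinned down.

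The main obstacle is the list-colouring extension step in the upper bound, particularly in the regime where $|S|$ is close to $k+1$ and the lists on $U$ shrink to size two or three. Here triangle-freeness of $G[B]$ must be used non-trivially: because every vertex of $B$ has an independent neighbourhood, a single colour class of the partial colouring cannot occupy two adjacent neighbours of any $v \in U$, which provides the slack needed to run a Hall-type argument. Establishing a clean deficiency bound in this regime is, I expect, the most technical piece of the argument.
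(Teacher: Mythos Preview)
First, note that the paper does not prove this theorem; it is quoted from \citet{DK17} and used as a black box. So there is no in-paper proof to compare against.

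That said, your upper-bound induction has a genuine gap. You colour $G-U$ with an \emph{arbitrary} proper $c$-colouring and then attempt to extend over $U$. Take the extreme case $|U|=1$, say $U=\{v\}$ with $|S|=k$. Since $G$ is triangle-free, $N_G(v)\cap S$ is independent and can have size up to $k$; for every $k\geq 3$ we have $c=\ceil{(k+3)/2}\leq k$, so nothing stops the $c$-colouring of $G-U$ from assigning all $c$ colours to vertices of $N_G(v)\cap S$, leaving $v$ with an empty list. No Hall-type argument salvages an empty list. Your key observation---that neighbourhoods in $B$ are independent---is precisely what \emph{permits} this bad colouring of $S$, not what prevents it. To make a leaf-bag induction go through you must carry a stronger invariant constraining the colouring on the interface (roughly, that no independent subset of the current bag uses all $c$ colours), and establishing and maintaining such an invariant is the actual content of the upper bound; your plan does not touch this.

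For the lower bound, the hint available in this paper (see the proof of \cref{TriangleFreeTreedepthProperLowerBound}, described there as ``a variant of the approach of \citet{DK17}'') indicates that the relevant construction is a tree-shaped graph in which each new vertex is made adjacent to a prescribed independent set of its ancestors, forcing any proper colouring to spend many colours along some root--leaf path. Your $C_5$-gadget recursion is a different idea and, as written, is too vague to verify that it simultaneously hits chromatic number $\ceil{(k+3)/2}$ and treewidth exactly $k$.
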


We extend their lower bound to prove that for the class of triangle-free graphs with treewidth $k$, the defective chromatic number, the clustered chromatic number, and the chromatic number all equal $\ceil{\frac{k+3}{2}}$ (see \cref{TriangleFreeTreewidthDefectiveClusteredProper} which in fact holds for any fixed excluded complete subgraph). We prove the following similar result for treedepth: for the class of triangle-free graphs with treedepth $k$, the defective chromatic number, the clustered chromatic number, and the proper chromatic number are all equal, and this value is between $(1-o(1))\sqrt{2k}$ and $\ceil{\frac{k+2}{2}}$ (see \cref{TriangleFreeTreedepth}). The lower bound for triangle-free graphs with treedepth $k$ also leads to a $\Omega(\sqrt{\log k})$ lower bound for triangle-free graphs with circumference $k$ (see \cref{CircumferenceLowerBound}). So the defective and clustered chromatic number of the class of triangle-free graphs with circumference $k$ is between $\Omega(\sqrt{\log k})$ and  $O(\log k)$. Finally, \cref{FVN1TriangleFreeNotDefectLowerBound} shows that three colours is best possible even for triangle-free graphs with feedback vertex number 1. Thus for each integer $k\geq 1$, the clustered and defective chromatic number of the class of triangle-free graphs $G$ with $\fvn(G)\leq k$ equals 3. 

Now consider the girth at least 5 case. The following result of \citet{OOW19} is paramount. Here $\nabla(G)$ is the maximum average degree of a graph $H$ such that the 1-subdivision of $H$ is isomorphic to a subgraph of $G$. 

\begin{thm}[\citep{OOW19}]
\label{OOW}
Every $K_{s,t}$-subgraph-free graph $G$ is $s$-colourable with defect at most some function $f(s,t,\nabla(G))$.
\end{thm}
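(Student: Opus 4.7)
The plan is to combine a degeneracy reduction with a local recolouring argument driven by the $K_{s,t}$-subgraph-free condition.

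First, I would reduce to bounded degeneracy. I claim that $\nabla(G)\leq N$ together with $K_{s,t}$-subgraph-freeness forces $G$ to be $d$-degenerate for some $d=d(s,t,N)$. The reason is that any subgraph $H\subseteq G$ of very high minimum degree, being $K_{s,t}$-subgraph-free, has the property that every pair of vertices shares fewer than $t$ common neighbours, so many internally-disjoint length-two paths between pairs of vertices of high degree in $H$ are available. Assembling these paths yields a $1$-subdivision of an auxiliary graph of large average degree as a subgraph of $G$, contradicting $\nabla(G)\leq N$. Hence a subgraph of $G$ with minimum degree exceeding some threshold $d(s,t,N)$ cannot exist.

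Second, fix a degeneracy ordering $v_{1},\dots,v_{n}$ and colour greedily in this order, assigning each $v_{i}$ a colour in which it has the fewest back-neighbours. Each vertex then has at most $\lfloor d/s\rfloor$ earlier same-coloured neighbours, so each monochromatic subgraph is $\lfloor d/s\rfloor$-degenerate. This alone does not yet give a defect bound, because a vertex may still have many later same-coloured neighbours.

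Third, I would upgrade this to a defect bound by an iterative local recolouring. Fix a threshold $T=T(s,t,N)$ and, while some vertex $v$ has monochromatic degree exceeding $T$, recolour $v$ with a colour class in which it has fewest neighbours (which is at most $\deg_{G}(v)/s$), guided by a potential function such as the lexicographically-ordered sequence of per-vertex monochromatic degrees in non-increasing order. The $K_{s,t}$-freeness is used to show that the process must terminate: otherwise, some vertex would accumulate large monochromatic degree in each colour class it visits, which combined with the degeneracy of each monochromatic subgraph yields a local structure containing either a $K_{s,t}$ subgraph or a $1$-subdivision of a dense auxiliary graph, contradicting one of the two hypotheses.

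The main obstacle is the quantitative control in Step 3: the potential and threshold must be tuned so that each recolouring strictly decreases the potential and so that termination yields a uniform defect bound $f(s,t,\nabla(G))$. Establishing strict decrease requires arguing that a vertex with monochromatic degree exceeding $T$ cannot simultaneously have comparable monochromatic degree in every other colour class, and it is precisely here that the $K_{s,t}$-freeness (controlling common neighbourhoods locally) and the $\nabla$-bound (controlling sparse topological density globally) must be combined. I expect the resulting $f(s,t,\nabla(G))$ to be polynomial in $\nabla(G)$ with coefficients depending on $s$ and $t$.
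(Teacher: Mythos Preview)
Your Step~3 is where the argument breaks. The standard local-recolouring move (shift a high-defect vertex to its sparsest colour class) terminates because it cannot increase the total number of monochromatic edges, but what it guarantees at termination is only that every vertex $v$ has monochromatic degree at most $\deg_G(v)/s$. That bounds the defect by $\Delta(G)/s$, not by any function of $s$, $t$, and $\nabla(G)$. Your proposed lexicographic potential on the sorted sequence of monochromatic degrees does not obviously decrease either: moving $v$ to a new class can raise the monochromatic degree of each of $v$'s new same-colour neighbours, possibly above the previous maximum. You then appeal to $K_{s,t}$-freeness and the $\nabla$-bound to ``force termination'' with the right defect, but no mechanism is supplied; those hypotheses constrain static neighbourhood structure, not the dynamics of recolouring. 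As you yourself note, this is the main obstacle, and it is not overcome.

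The paper's proof sidesteps recolouring altogether by strengthening the ordering in Step~2. Rather than a degeneracy ($\col_1$) ordering, it takes an ordering $\preceq$ witnessing the $2$-strong colouring number $\col_2(G)$, which is tied to $\nabla(G)$ by a result of Zhu. Rather than choosing the sparsest colour, it assigns each vertex a colour \emph{distinct from that of its $s-1$ leftmost neighbours}. The point of this rule is structural: if $w\succ v$ is a monochromatic neighbour of $v$, then $v$ cannot be among the $s-1$ leftmost neighbours of $w$, so $w$ has an $(s-1)$-set $A_w$ of neighbours strictly left of $v$. Every vertex of every $A_w$ is $2$-reachable from $v$ (via $w$), hence $\bigcup_w A_w$ has size at most $\col_2(G)$. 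Since any fixed $(s-1)$-set $A'$ together with $v$ forms an $s$-set, at most $t-1$ such $w$ can have $A_w=A'$ (else a $K_{s,t}$ appears), giving at most $(t-1)\binom{\col_2(G)}{s-1}$ right monochromatic neighbours and hence defect at most $\col_2(G)+(t-1)\binom{\col_2(G)}{s-1}$. The passage from $\col_1$ to $\col_2$ is exactly what lets a single greedy pass control forward monochromatic neighbours---this is the idea your outline is missing.
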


We give a short proof of a result that is equivalent to \cref{OOW} in \cref{StrongColouringNumbers}. 

Every graph with girth at least $5$ contains no $K_{2,2}$ subgraph. Thus \cref{OOW} with $s=t=2$ implies that for any graph class $\GG$ with bounded $\nabla$ (which includes every minor-closed class), the  graphs in $\GG$ with girth at least 5 are $2$-colourable with bounded defect. Girth 5 here cannot be reduced to 4, even for graphs with bounded treedepth, since we prove that the class of triangle-free graphs with treedepth $k$ has defective chromatic number at least $\Omega(\sqrt{k})$ (see \cref{TriangleFreeTreedepthProperLowerBound}). 
By \eqref{Parameters}, the same result holds with `treedepth' replaced by `pathwidth', `treewidth' or `Hadwiger number'. For each of these parameters,  there is a qualitative change in behaviour from girth 4 to 5. For a class of graphs with girth at least 5 and with any of these parameters bounded, the defective chromatic number equals 2. On the other hand, for graphs with girth 4, the defective chromatic number increases with the parameter. 

\subsection{Defective Colouring of General Minor-Closed Classes}

Now consider defective colourings of planar graphs and more general minor-closed classes; the results are summarised in \cref{TableMinorDefective}.

\begin{table}[!ht]
\begin{center}
\caption{Defective chromatic number of minor-closed classes with respect to girth.}
\label{TableMinorDefective}
\begin{tabular}{c|cccc}
\hline
girth & planar graphs & Euler genus $\gamma$ & $(\gamma,k)$-apex & Hadwiger number $t$\\
\hline
3 & $3$ & $3$ & $4$ & $t$\\
4 & 2 or 3 & 2 or 3 & 2, 3 or 4 & $\ceil{\frac{t+2}{2}}^{\ast}\dots t$   \\
$\geq 5$ & $2$ & $2$ & 2 & 2\\
\hline
\end{tabular}
\end{center}
\end{table}

First consider the case with no girth constraint. \citet{CCW86} proved that planar graphs are 3-colourable with defect 2 (see \citep{Poh90,EH99,vdHW18} for various strengthenings). More generally, \citet{Archdeacon87} proved that graphs of Euler genus $\gamma$ are 3-colourable with defect $O(\gamma)$, and \citet{CGJ97} improved the defect bound to $O(\sqrt{\gamma})$, which is best possible. Standard examples provide, for any integer $d\geq 0$, a planar graph that is not 2-colourable with defect $d$ (see \citep{WoodSurvey}). Thus for each integer $\gamma\geq 0$, the defective chromatic number of the class of graphs with Euler genus $\gamma$ equals $3$. 

Now consider apex graphs. For integers $\gamma,k\geq 0$, a graph $G$ is \defn{$(\gamma,k)$-apex} if there exists $A\subseteq V(G)$ such that $|A|\leq k$ and $G-A$ has Euler genus at most $\gamma$. A $(0,k)$-apex graph is called \defn{$k$-apex}, in which case $G-A$ is planar. Every $(\gamma,k)$-apex graph is 4-colourable with defect $O(\max\{\sqrt{\gamma},k\})$: just apply the above 3-colour result for graphs of Euler genus $\gamma$, and use a fourth colour for the $k$ apex vertices. Standard examples show that four colours is best possible, even for $(0,1)$-apex graphs (see \citep{WoodSurvey}). 

Now consider the class of graphs $G$ with Hadwiger number $\had(G)\leq t$. \citet{Hadwiger43} famously conjectured that such graphs are properly $t$-colourable. The best known upper bound on the chromatic number is $O(t\log \log t)$ due to \citet{DP25}. Standard examples provide a lower bound of $t$ on the defective (and thus clustered) chromatic number of this class. \citet{EKKOS15} proved that graphs $G$ with $\had(G)\leq t$ are $t$-colourable with defect $O(t^2\log t)$. Thus the defective chromatic number of the class of graphs $G$ with $\had(G)\leq t$ equals $t$. The defect bound in the above result of \citet{EKKOS15} was improved to $O(t)$ by \citet{vdHW18}.

Finally, consider the triangle-free (girth 4) case. Each of the upper bounds in the triangle-free case of 
\cref{TableMinorDefective} follow from the analogous upper bounds without the triangle-free assumption. The lower bounds of 2 are trivial. The lower bound of $\ceil{\frac{t+2}{2}}$ for graphs with Hadwiger number $t$ follows from the fact that the defective chromatic number of the class of triangle-free graphs with treewidth $k$ equals $\ceil{\frac{k+3}{2}}$ and since $\had(G)\leq\tw(G)+1$ for every graph $G$.

\subsection{Clustered Colouring of Bounded Treewidth Graphs}

Here we consider clustered colourings of various bounded treewidth graph classes with respect to girth. \cref{TableTW} summarises the results. 

\begin{table}[!ht]
\setlength{\tabcolsep}{1.5mm}
\caption{Clustered chromatic number of bounded treewidth classes with respect to girth (ignoring lower order additive terms).}
\label{TableTW}
\centerline{
\begin{tabular}{c|ccccc}
\hline
girth & fvn $k$ & circumference $k$ & treedepth $k$ & pathwidth $k$ & treewidth $k$ \\
\hline
3 & 3 & $\Theta(\log k)$ & $k$ & $k+1$ & $k+1$ \\
4 & 
3  & 
$\Omega(\sqrt{\log k})^{\ast} \dots O(\log k)$  & $\sqrt{2k}^{\ast}\dots\ceil{\frac{k+2}{2}}^{\ast}$ &
$\sqrt{2k}^{\ast}\dots\ceil{\frac{k+3}{2}}^{\ast}$ &
$\ceil{\frac{k+3}{2}}^{\ast}$ \\
5 or 6 & $3^{\ast}$ & $2^{\ast}$ & $2^{\ast}$ & 3 & 3 \\
$\geq 7$ & $2^{\ast}$ & $2^{\ast}$ & $2^{\ast}$ & 2 or 3 & 2 or 3 \\
$\Omega(\log k)$ & $2^{\ast}$& $2^{\ast}$& $2^{\ast}$& $2^{\ast}$ & $2^{\ast}$ \\
\hline
\end{tabular}}
\end{table}

\citet{LW2} proved the following analogue of \cref{OOW} for clustered colouring of bounded treewidth graphs. 

\begin{thm}[\citep{LW2}]
\label{LiuWoodHplanar}
For any $s, t, k \in \NN$, every $K_{s,t}$-subgraph-free graph of treewidth at most $k$ is  $(s+1)$-colourable with clustering at most some function $f(s,t,k)$.
\end{thm}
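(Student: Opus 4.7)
The plan is to prove the theorem by induction on $s$, using a rooted tree decomposition $(T,(B_x)_{x \in V(T)})$ of $G$ of width at most $k$; for each vertex $v$ let $t(v)$ denote the node of $T$ closest to the root with $v \in B_{t(v)}$.

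For the base case $s=1$, a $K_{1,t}$-subgraph-free graph has maximum degree at most $t-1$. Graphs of bounded maximum degree $\Delta$ and bounded treewidth $k$ are $2$-colourable with clustering bounded by some function $f_0(\Delta,k)$: this follows from a standard recursion on a balanced separator of size at most $k+1$ (which exists since $\tw(G) \le k$), where the two recursively coloured sides are glued together and the separator vertices are absorbed into existing components whose size stays controlled by the maximum-degree bound.

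For the inductive step $s \ge 2$, I would construct a set $S \subseteq V(G)$ such that (a) every component of $G[S]$ has at most $c_0 = c_0(s,t,k)$ vertices and (b) $G - S$ is $K_{s-1,t'}$-subgraph-free for some $t' = t'(s,t,k)$. Given such an $S$, the inductive hypothesis applied to $G - S$ yields an $s$-colouring of $G-S$ with clustering at most $f(s-1,t',k)$; assigning the $(s+1)$-th colour to $S$ then produces an $(s+1)$-colouring of $G$ with clustering at most $\max\{c_0,\,f(s-1,t',k)\}$. The set $S$ would be assembled bag-by-bag: inside each bag $B_x$ select a bounded-size subset $S_x \subseteq B_x$ that hits every potential $K_{s-1,t'}$-subgraph having a vertex $v$ with $t(v)=x$. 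The connected-subtree property of the tree decomposition then forces each component of $G[S]$ to live inside a small subtree of $T$, bounding its size in terms of $k$ and the per-bag budget.

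The main obstacle is the simultaneous tension between requirements (a) and (b): hitting all $K_{s-1,t'}$-subgraphs typically demands a largish $S$, while keeping $|S \cap B_x|$ bounded limits how many vertices can be selected locally. The $K_{s,t}$-subgraph-freeness of $G$ is the key lever, since any would-be $K_{s-1,t'}$ in $G-S$ combined with an additional vertex on the small side must avoid forming a $K_{s,t}$ in $G$; this sharply restricts the structure of such subgraphs relative to each bag of size $\le k+1$ and allows a pigeonholing argument to identify a small hitting set inside each bag. Calibrating $t'$ and $c_0$ against $k$, and verifying that the chosen $S_x$'s indeed hit every potential obstruction without interfering across bags, is the delicate step.
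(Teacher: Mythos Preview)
First, note that the paper does not prove this theorem: it is quoted from \citet{LW2} and used as a black box, so there is no in-paper argument to compare your proposal against.

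As for the proposal itself: it is a plausible-sounding strategy, but it is not a proof --- you have correctly located the hard step and then not done it. In the inductive step you need a set $S$ with $G[S]$ having bounded components and $G-S$ being $K_{s-1,t'}$-free. Your plan is to choose $S_x\subseteq B_x$ bag-by-bag to ``hit every potential $K_{s-1,t'}$-subgraph having a vertex $v$ with $t(v)=x$''. But a copy of $K_{s-1,t'}$ need not be localised at one bag: the $t'$ vertices on the large side may each have a different home node, so it is unclear what local condition at $x$ guarantees a hit. If instead you aim to hit the $(s-1)$-side, there can be unboundedly many distinct $(s-1)$-sets playing that role, and a bag of size $\leq k+1$ does not obviously control them all. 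The $K_{s,t}$-freeness of $G$ does give leverage, but converting it into an explicit per-bag hitting rule with provable bounds on $t'$ and on the component sizes of $G[S]$ is exactly the substance of the theorem, and you have not supplied it. Your final paragraph openly says as much.

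The base case is also only a sketch. In a balanced-separator recursion on a bounded-degree bounded-treewidth graph, monochromatic components from the two sides can merge through the separator and grow at every level of the recursion; ``absorbing the separator vertices into existing components'' does not by itself keep clustering bounded without a further argument limiting how many levels a single component can span.
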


As before, \cref{LiuWoodHplanar} with $s=t=2$ implies every graph with girth at least 5 and with treewidth at most $k$ is 3-colourable with clustering at most some function $f(k)$. In both \cref{LiuWoodHplanar,LiuWoodH} the number of colours is best possible, but the extremal examples have many triangles. 

Consider the triangle-free case (girth 4). As mentioned above, we extend \cref{DK} to show that the clustered chromatic number of triangle-free graphs with treewidth $k$ equals $\ceil{\frac{k+3}{2}}$. This implies the same upper bound for triangle-free graphs with pathwidth $k$, and an upper bound of $\ceil{\frac{k+2}{2}}$ for triangle-free graphs with treedepth $k$. As mentioned above, \citet{MRW17} showed that graphs with circumference $k$ are $O(\log k)$-colourable with clustering $k$. We show that this bound on the number of colours is best possible even for triangle-free graphs (see  \cref{CircumferenceLowerBound}). Thus the clustered chromatic number of the class of triangle-free graphs with circumference $k$ is in $\Theta(\log k)$. For triangle-free graphs with $\fvn(G)\leq k$ the upper bound of 3 is best possible, even for $k=1$ (see \cref{FVN1TriangleFreeNotDefectLowerBound}). 

Now consider the case of girth at least 5. \cref{OOW} implies that graphs with girth at least 5 and with bounded treedepth are 2-colourable with bounded clustering, since in a graph of bounded treedepth, every path has bounded length (see \citep{Sparsity}). This observation in fact implies that for every graph class $\GG$ with bounded treedepth, $\chi_*(\GG)=\chi_\Delta(\GG)$. Similarly, we improve the number of colours from 3 to 2 in the case of graphs with girth at least 5 and circumference $k$  (see \cref{CircumferenceGirth5}), and in the case of graphs with girth at least 7 and feedback vertex number $k$  (see \cref{FVNgirth7}).  The upper bound of 2 for graphs with treewidth~$k$ and girth $\Omega(\log k)$  in \cref{TableTW} follows from our more general results for graphs with Hadwiger number $t$ and with girth $\Omega(\log t)$ (see \cref{BigGirth}).

%We now focus on results of the form, there are constants $g$ and $c$, such that that graphs in a particular minor-closed graph class with girth at least $g$ are $c$-colourable with bounded clustering.

\subsection{Clustered Colouring of General Minor-Closed Classes}

Here we consider clustered colourings of various minor-closed graph classes (with unbounded treewidth) with respect to girth. \cref{TableMinor} summarises the results. 

\begin{table}[!ht]
\begin{center}
\caption{Clustered chromatic number of minor-closed classes with respect to girth.}
\label{TableMinor}
\begin{tabular}{c|cccc}
\hline
girth & planar graphs & Euler genus $\gamma$ & $(\gamma,k)$-apex & Hadwiger number $t$\\
\hline
3 & $4$ & $4$ & $5$ & $t$\\
4 & $3$ & $3$ & 3 or 4 & $\ceil{\frac{t+2}{2}}^{\ast}\dots t$   \\
5 or 6 & $2$ & $2$ & $3^{\ast}$ & 3 or 4\\
7 or 8 & $2$ & $2$ & $2^{\ast}$ or $3^{\ast}$ & 2, 3 or 4\\
$\geq 9$ & $2$ & $2$ & $2^{\ast}$ & 2, 3 or 4\\
$\Omega(\log t)$ &&&& $2^{\ast}$\\
\hline
\end{tabular}
\end{center}
\end{table}

First consider planar graphs. The  4-Colour Theorem says that planar graphs are properly 4-colourable~\citep{RSST97}. A much simpler proof by \citet{CCW86} shows that  planar graphs are 4-colourable with clustering 2. Standard examples show that the clustered chromatic number of the class of planar graphs equals 4 (see \citep{WoodSurvey}). Similarly, Grotzsch's Theorem~\citep{Grotzsch7,Thomassen03,Thomassen94,SY89} says that triangle-free planar graphs are properly 3-colourable. As a lower bound, for any integer $d\geq 0$,  \citet{Skrekovski99} constructed triangle-free planar graphs that are not 2-colourable with defect $d$; thus the defective and clustered chromatic number of triangle-free planar graphs equals 3. (Note that \cref{SSS} below implies that triangle-free planar graphs are 3-colourable with bounded clustering, without using Grotzsch's Theorem). 

Now consider girth $g\geq 5$. \citet{LW2} proved the following analogue of \cref{LiuWoodHplanar}.

\begin{thm}[\citep{LW2}]
\label{LiuWoodH}
For any $s,t\in\NN$ and for any graph $X$, every $K_{s,t}$-subgraph-free $X$-minor-free graph is $(s+2)$-colourable with clustering at most some function $f(s,t,X)$.
\end{thm}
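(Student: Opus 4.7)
The plan is to reduce to the bounded-treewidth case of Theorem~\ref{LiuWoodHplanar}, paying exactly one extra colour to handle the difference between bounded treewidth and $X$-minor-free.

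First, invoke the Dujmović--Morin--Wood layered-treewidth theorem: every $X$-minor-free graph $G$ admits a BFS layering $V_0, V_1, \dots$ such that, for any $h$, the union of any $h$ consecutive layers induces a subgraph of treewidth at most $wh - 1$, where $w = w(X)$. Choose $h$ much larger than the clustering bound $c_0 := f(s, t, wh - 1)$ supplied by Theorem~\ref{LiuWoodHplanar}, group the layers into blocks $B_j := V_{jh} \cup \dots \cup V_{(j+1)h - 1}$, and apply Theorem~\ref{LiuWoodHplanar} to each $G[B_j]$ to obtain an $(s+1)$-colouring $\phi_j$ with clustering at most $c_0$, all drawn from a common palette $\{1, \dots, s+1\}$.

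Next, glue these per-block colourings using the extra colour $s+2$. For each $j$, recolour with $s+2$ every vertex in the top layer $V_{(j+1)h - 1}$ that lies in a $\phi_j$-monochromatic component with at least one neighbour in $V_{(j+1)h}$. Since each such component has at most $c_0$ vertices, this removes at most $c_0$ vertices per component, so the residual colouring in colours $1, \dots, s+1$ still has clustering at most $c_0$ and is now trapped inside its block. Moreover colour-$(s+2)$ vertices lie only in the layers $V_{(j+1)h - 1}$, which are pairwise non-adjacent in $G$ (provided $h \geq 2$), so every colour-$(s+2)$ component is confined to a single such layer.

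The main obstacle is to bound the clustering of colour $s+2$ within a single top layer: each such layer induces a $K_{s,t}$-subgraph-free subgraph of treewidth at most $w - 1$, which by Theorem~\ref{LiuWoodHplanar} is $(s+1)$-colourable with bounded clustering, but a naive application would add $s+1$ fresh colours rather than just one. The delicate step is to fix a bounded-clustering $(s+1)$-colouring of each top layer \emph{first} (by Theorem~\ref{LiuWoodHplanar} inside the layer), and then extend this boundary colouring into each block's interior via an extension-form of Theorem~\ref{LiuWoodHplanar} for bounded treewidth, reusing the same $(s+1)$ colours. Producing such extensions — or replacing this scheme with a more careful structural argument using the $X$-minor-free hypothesis directly, perhaps via Robertson--Seymour apex-vortex structure — is where the constant $f(s, t, X)$ arises and is the heart of the proof.
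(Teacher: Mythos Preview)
The paper does not prove this theorem at all: it is quoted verbatim from \cite{LW2} and used as a black box. So there is no ``paper's own proof'' to compare against here.

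That said, your sketch has genuine gaps. First, the Dujmovi\'c--Morin--Wood bounded-layered-treewidth theorem you invoke does \emph{not} hold for arbitrary $X$-minor-free graphs; it holds precisely when $X$ is an apex graph. For a general excluded minor $X$ (say $X=K_6$), there exist $X$-minor-free graphs with unbounded layered treewidth, so the very first step of your reduction fails. The actual proof in \cite{LW2} cannot be a pure layering argument and instead uses deeper structure (a tree-decomposition into nearly-embeddable pieces). Second, even granting bounded layered treewidth, your choice of $h$ is circular: you ask for $h$ much larger than $c_0=f(s,t,wh-1)$, but $c_0$ already depends on $h$. Third, you yourself flag that the glueing step---extending a prescribed $(s+1)$-colouring on the boundary layer into each block with only the same $s+1$ colours and bounded clustering---is unproved and is ``the heart of the proof''; that is not a detail, it is the whole difficulty. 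As written, this is a plan with the key ingredient missing, built on a structural hypothesis that is false in the stated generality.
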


As before, \cref{LiuWoodH} with $s=t=2$ implies that every $X$-minor-free graph with girth at least 5 is 4-colourable with clustering at most some function $f(X)$.

% \begin{prop}
% For all $c$ there is a triangle-free apex-forest graph (thus with treewidth $2$ and planar) that is not 2-colourable with clustering at most $c$. 
% \end{prop}

% \begin{proof}
% \includegraphics[width=60mm]{}
% \end{proof}

Now consider graphs embeddable on surfaces. 
 \citet{EO16} and \citet{KT12} proved that graphs of bounded Euler genus are 5-colourable with bounded clustering. Improving these results, \citet{DN17} showed that every graph with Euler genus $\gamma$ is 4-colourable with clustering $O(\gamma)$. Hence the clustered chromatic number of the class of graphs embeddable on any fixed surface equals 4.  The proof is based on the following lemma, which is proved using the so-called island method (see \citep{WoodSurvey} for a concise presentation).

\begin{lem}[\citep{DN17}]
\label{SSS}
Let $\GG$ be a hereditary graph class such that for some real numbers $c,\epsilon,\delta>0$ and some integers $k,n_0\geq 2$, for all $n\geq n_0$, every $n$-vertex graph in $\GG$ has a balanced separator of order at most $cn^{1-\epsilon}$, and has at most $(k-\delta)n$ edges. Then every graph in $\GG$ is $k$-colourable with clustering at most some function $f(k,c,\epsilon,\delta,n_0)$.
\end{lem}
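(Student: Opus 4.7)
The plan is to prove by induction on $n=|V(G)|$ that every $G\in\GG$ admits a $k$-colouring with clustering at most some function $f=f(k,c,\epsilon,\delta,n_0)$. The base case $n\leq n_0$ is trivial: assign colours arbitrarily and obtain clustering at most $n_0$. For the inductive step, I would use the \emph{island method}: locate a small subset $I\subseteq V(G)$ with bounded size and a small edge-boundary, recursively $k$-colour $G-I$ (which lies in $\GG$ by heredity and still meets the hypotheses), and then extend the colouring to $I$ in a way that does not create overly large monochromatic components.

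To locate the island I would iterate the balanced separator hypothesis. Starting with $G_0=G$, at each step take a balanced separator $S_i$ of $G_i$ of order at most $c|V(G_i)|^{1-\epsilon}$, pass to the larger side as $G_{i+1}$, and stop once $|V(G_i)|\leq F_1$ for a threshold $F_1$ chosen at the end. Since $(2/3)^{i(1-\epsilon)}$ decays geometrically, the accumulated set $T=\bigcup_i S_i$ has size $O(n^{1-\epsilon})$, and every connected component of $G-T$ — each ``tile'' — has at most $F_1$ vertices. The edge-density hypothesis then guarantees, by averaging over the roughly $n/F_1$ tiles, that some tile $I$ has small edge-boundary into $T$, say $|E_G(I,V\setminus I)|\leq (k-\delta)F_1$.

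Given such an island, I would $k$-colour $G-I$ by induction and extend by colouring all of $I$ with a single colour $j\in\{1,\dots,k\}$ chosen to minimise the number of $j$-coloured neighbours of $I$ in $G-I$. The number of monochromatic colour-$j$ components of $G-I$ thereby joined to $I$ is at most $|E_G(I,V\setminus I)|/k\leq(1-\delta/k)F_1$ by pigeonhole, so the new colour-$j$ component of $G$ containing $I$ has at most $F_1+(1-\delta/k)F_1\cdot f$ vertices. Choosing $f$ to satisfy the fixed-point inequality $F_1+(1-\delta/k)F_1 f\leq f$ then closes the induction; the strict positivity of $\delta$ is essential, as it keeps the multiplier $(1-\delta/k)$ below $1$ and makes the inequality solvable.

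The main obstacle is making the fixed-point bookkeeping actually consistent: rearranged, the inequality demands roughly $F_1<k/(k-\delta)$, which is only marginally larger than $1$, so a naive averaging over tiles is typically not sharp enough to produce an island meeting the required bound. The standard remedy is to refine the island construction — either by iteratively discarding tiles of above-average boundary and re-running the separator on what remains, or by passing to a quotient multigraph whose super-vertices contract each colour-$j$ component of the inductive colouring into a single node (so that pigeonhole counts components, not edges). Once the constants $F_1$ and $f$ are balanced against $c$, $\epsilon$, $\delta$, and $n_0$, verification of the inductive step is routine; this balancing is where the bulk of the work lies and is what produces the unspecified function $f(k,c,\epsilon,\delta,n_0)$ in the statement.
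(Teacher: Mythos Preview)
The paper does not prove this lemma; it is quoted from \citet{DN17} with a pointer to the island method in \citep{WoodSurvey}. Your sketch is in the right family, but the specific mechanism has a real gap that your suggested remedies do not close. You take a tile $I$ with bounded \emph{total} edge-boundary and try to splice it into the inductive colouring by pigeonhole on colours; as you correctly compute, the resulting recursion $F_1+(1-\delta/k)F_1\,f\le f$ has multiplier $(1-\delta/k)F_1>1$ for any usable $F_1$, so no fixed point exists. Neither of your fixes helps: discarding above-average tiles still only controls total boundary, and contracting monochromatic components before pigeonholing still bounds the number of joined components by the same $(1-\delta/k)F_1$. Your closing claim that ``verification of the inductive step is routine'' once constants are balanced is therefore not right --- with this island notion the constants cannot be balanced.

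The missing idea is a \emph{per-vertex} island: a nonempty set $I$ of bounded size in which every vertex has at most $k-1$ neighbours in $V(G)\setminus I$. After inductively colouring $G-I$, greedily colour each $v\in I$ avoiding the colours of its $\le k-1$ outside neighbours; then no monochromatic edge crosses the boundary, every monochromatic component lies entirely inside $I$ or entirely inside $G-I$, and the induction closes with clustering $\max\{|I|,n_0\}$ --- no recursive blow-up at all. To find such an $I$, decompose $G$ by iterated balanced separators into pieces $P_1,\dots,P_m$ of size $\le s$ with total separator $T$ of size $\le\alpha n$ (possible for any fixed $\alpha>0$ once $s$ is large, since the separator sizes sum geometrically). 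Within each $P_j$, iteratively delete any vertex with $\ge k$ neighbours outside the current set. If this process empties every $P_j$, then each deleted vertex had $\ge k$ neighbours in $T$ or among earlier deletions from the same piece, whence $|E_G(P_j,T)|+|E(G[P_j])|\ge k|P_j|$; summing over $j$ gives $|E(G)|\ge k(n-|T|)\ge(k-k\alpha)n$, contradicting $|E(G)|\le(k-\delta)n$ once $\alpha<\delta/k$. Hence some $P_j$ leaves a nonempty residue, and that residue is the required island.
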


The balanced separator assumption in \cref{SSS} is satisfied with $\epsilon=\frac12$ for planar graphs~\cite{LT79,LT80}, for graphs of bounded genus~\cite{GHT-JAlg84,Djidjev85a}, and for any minor-closed class~\cite{AST90}.

The above-mentioned clustered  4-colour theorem of \citet{DN17} is implied by \cref{SSS}   with $\delta=\frac12$ and 
$n_0:=6(\gamma-2)$, since then it follows from Euler's formula that every graph $G$ with $n\geq 3$ vertices and Euler genus $\gamma$ satisfies  $|E(G)|\leq 3(n+\gamma-2)\leq (4-\delta)n$ for $n\geq n_0$. Similarly, every triangle-free graph  with $n\geq 3$ vertices and Euler genus $\gamma$ has at most $2(n+\gamma-2)$ edges. Thus such graphs are 3-colourable with bounded clustering by \cref{SSS}. More generally, every $n$-vertex graph  with Euler genus $\gamma$ and girth $g$ has at most $\frac{g}{g-2}(n+\gamma-2)$ edges. In particular, if $g\geq 5$, then $G$ has at most $\frac53(n+\gamma-2)$ edges, implying such graphs are 2-colourable with bounded clustering by \cref{SSS}. These results were mentioned by \citet{WoodSurvey}. 

Our results are summarised in \cref{TableMinor}. Every $(\gamma,k)$-apex graph $G$ is 5-colourable with bounded clustering: 4-colour $G-A$ using \cref{SSS} and use one new colour for $A$. Standard examples show that the clustered chromatic number of $(0,1)$-apex graphs equals 5. Similarly, triangle-free $(\gamma,k)$-apex graphs are 4-colourable with bounded clustering. Our other proofs for $(\gamma,k)$-apex graphs are in \cref{ApexGraphs}.
For $(0,5)$-apex graphs with girth at least $6$,
\cref{FeedbackLowerBound} implies that the clustered chromatic number is at least $3$.

%\robert{What is our lower bound result showing that the clustered chromatic number of $(\gamma,k)$-apex graphs with grith 5 or 6 equals $3$?}

%\citep{GT97}:  It is well known that the maximum chromatic number of a graph on the orientable surface Sg is θ(g1/2). We prove that there are positive constants c1,c2 such that every triangle-free graph on Sg has chromatic number less than c2(g/log(g))1/3 and that some triangle-free graph on Sg has chromatic number at least c1g1/3/log(g). We obtain similar results for graphs with restricted clique number or girth on Sg or Nk. As an application, we prove that an Sg-polytope has chromatic number at most O(g3/7). For specific surfaces we prove that every graph on the double torus and of girth at least six is 3-colorable and we characterize completely those triangle-free projective graphs that are not 3-colorable.

Now consider the clustered chromatic number of the class of graphs $G$ with $\had(G)\leq t$. As mentioned above, standard examples show a lower bound of $t$. \citet{KM07} first proved a $O(t)$ upper bound. Their bound was $\ceil{\frac{31}{2}(t+1)}$, which was improved to $\ceil{\frac{7t+4}{2}}$ by \citet{Wood10}\footnote{The result of \citet{Wood10} depended on a result announced by Norin and Thomas~\cite{Thomas09}, which has not yet been fully written.}, to $4t$ by \citet{EKKOS15}, to $3t$ by \citet{LO18}, to $2t$ independently by \citet{Norin15}, \citet{vdHW18} and \citet{DN17}, to $t+1$ by \citet{LW3}, and to $t$ by \citet{DEMW23} (announced earlier by \citet{DN17}).
Thus the clustered chromatic number of the class of graphs $G$ with $\had(G)\leq t$ equals $t$. 

Finally, we mention the final rows in \cref{TableTW,TableMinor}, where the girth is allowed to grow slowly with the excluded minor. In \cref{BigGirth}, we prove that every graph with Hadwiger number at most $t$ and with girth $\Omega(\log t)$ is 2-colourable with clustering 2 (see \cref{GirthLogt}). This section also explores results on proper colouring of graphs with given Hadwiger number and given girth. We show, via a result of \citet{KO03}, that for every integer $g\geq 5$ there exists $c_g\in(0,1)$ with $c_g\to 0$, such that graphs with Hadwiger number $t$ and girth $g$ are properly $O(t^{c_g})$ colourable, thus asymptotically improving on Hadwiger's Conjecture for girth at least 5.

%%%%%%%%%%%%%%%%%%%%%%%%%%%%%%%%%%%%%%
\section{Definitions}
\label{Definitions}

We consider finite simple undirected graphs $G$ with vertex-set $V(G)$ and edge-set $E(G)$. For a vertex $v\in V(G)$, let $N_G(v):=\{w\in V(G): vw\in E(G)\}$ and $N_G[v]:=N_G(v)\cup\{v\}$. 

A \defn{graph class} $\GG$ is a set of graphs closed under isomorphism.  A graph class $\GG$ is \defn{proper} if some graph is not in $\GG$. A graph class $\GG$ is \defn{hereditary} if it is closed under induced subgraphs. 

A graph $G$ is \defn{$k$-degenerate} if every subgraph of $G$ has minimum degree at most $k$. Such graphs are properly $(k+1)$-colourable by a greedy algorithm.

A graph $H$ is a \defn{minor} of a graph $G$ if $H$ is isomorphic to a graph that can be obtained from a subgraph of $G$ by contracting edges. A graph~$G$ is \defn{$H$-minor-free} if~$H$ is not a minor of~$G$. The \defn{Hadwiger number} of a graph $G$, denoted \defn{$\had(G)$}, is the maximum integer $t$ such that $K_t$ is a minor of $G$. A graph class $\GG$ is \defn{minor-closed} if for every graph $G\in\GG$ every minor of $G$ is in $\GG$. Similarly, a graph parameter $f$ is \defn{minor-monotone} if $f(H)\leq f(G)$ for every graph $G$ and every minor $H$ of $G$.

A \defn{surface} is a compact 2-dimensional manifold. For any fixed surface $\Sigma$, the class of graphs embeddable on $\Sigma$ (without crossings) is minor-closed. A surface with $h$ handles and $c$ cross-caps has \defn{Euler genus} $2h+c$. The \defn{Euler genus} of a graph $G$ is the minimum Euler genus of a surface in which $G$ embeds. Euler genus is a minor-monotone graph parameter. See \cite{MoharThom} for more about graph embeddings in surfaces.

A \defn{tree-decomposition} of a graph $G$ is a collection $\TT=(B_x :x\in V(T))$ of subsets of $V(G)$ (called \defn{bags}) indexed by the vertices of a tree $T$, such that (a) for every edge $uv\in E(G)$, some bag $B_x$ contains both $u$ and $v$, and (b) for every vertex $v\in V(G)$, the set $\{x\in V(T):v\in B_x\}$ induces a non-empty (connected) subtree of $T$. The \defn{width} of $\TT$ is $\max\{|B_x| \colon x\in V(T)\}-1$. The \defn{treewidth} of a graph $G$, denoted \defn{$\tw(G)$}, is the minimum width of a tree-decomposition of $G$.  

A \defn{path-decomposition} is a tree-decomposition in which the underlying tree is a path, simply denoted by the corresponding sequence of bags $(B_1,\dots,B_n)$. The \defn{pathwidth} of a graph $G$, denoted \defn{$\pw(G)$}, is the minimum width of a path-decomposition of $G$. 
By definition, $$\tw(G)\leq\pw(G).$$ 

A tree $T$ with a distinguished vertex $r\in V(T)$ is said to be \defn{rooted} (at $r$). The \defn{depth} of a vertex $v$ in $T$ is the number of vertices in the $rv$-path in $T$. For distinct vertices $u,v$ in $T$, if $v$ is on the $ur$-path in $T$, then $v$ is an \emph{ancestor} of $u$ and $u$ is a \emph{descendant} of $v$ in $T$. For each vertex $v$ in a rooted tree $T$, the subtree induced by $v$ and all its descendents is the \defn{subtree of $T$ rooted at} $v$. A forest is \emph{rooted} if each component tree is rooted. The \defn{depth} of a rooted forest $F$ is the maximum depth of a vertex in $F$. The \defn{closure} of $F$ is the graph with vertex set $V(F)$, where $vw$ is an edge if and only if $v$ is an ancestor of $w$ or vice versa. The \defn{treedepth} of a graph $G$, denoted \defn{$\td(G)$}, is the minimum depth of a rooted forest $F$ such that $G$ is a spanning subgraph of the closure of $F$. It is well-known that for every graph $G$,
$$\pw(G)\leq \td(G)-1.$$ To see this, introduce one bag for each root--leaf path, ordered left-to-right according to a plane drawing of $F$, producing a path-decomposition of $G$. 
Finally, if $G$ is a subgraph of the closure of a rooted tree $T$, and $v$ is a vertex of $T$, then the \defn{subgraph of $G$ rooted at} $v$ is the subgraph of $G$ induced by the vertices of the subtree of $T$ rooted at $v$. 

For a graph $G$, a set $A\subseteq V(G)$ is a \defn{feedback vertex set} of $G$ if $G-A$ is a forest. The \defn{feedback vertex number} of a graph $G$, denoted \defn{$\fvn(G)$}, is the minimum number of vertices in a feedback vertex set of $G$. Since a forest has treewidth at most 1, $$\tw(G)\leq\fvn(G)+1.$$

The \defn{circumference $\cir(G)$} of a graph $G$ that is not a forest is the length of the longest cycle in $G$. The \defn{circumference} of a forest is 2. \citet{Birmele03} showed that $$\tw(G)\leq\cir(G)-1;$$ 
see \citep{MarshallWood15,BJMMSS} for related results.

It is well-known and easily seen that treewidth, pathwidth, treedepth, feedback vertex number and circumference are minor-monotone.

% A graph class $\GG$ is \defn{hereditary} if for every graph $G$, every induced subgraph of $G$ is in $\GG$. A graph class $\GG$ is \defn{monotone} if for every graph $G\in\GG$, every subgraph of $G$ is in $\GG$. 

% A \defn{rooted tree} is a tree $T$ with a nominated vertex called the \defn{root}. Consider a tree $T$ rooted at a vertex $r$. A path $P$ in $T$ is \defn{vertical} if the vertex in $P$ closest to $r$ is an endpoint of $P$. Consider a non-root vertex $x$ in $T$, and let $P$ be the $xr$-path in $T$. Every vertex $y$ in $P-x$ is an \defn{ancestor} of $x$, and $x$ is a \defn{descendent} of $y$. The neighbour $y$ of $x$ on $P$ is the \defn{parent} of $x$, and $x$ is a \defn{child} of $y$. A \defn{leaf} of $T$ is a non-root vertex with no children. A \defn{rooted forest} is a disjoint union of rooted trees. The \defn{vertex-height} of a rooted forest $F$ is the maximum number of vertices on a path from a root to a leaf in $F$. The \defn{closure} of $F$ is the graph with vertex set $V(F)$ and edge set $\{vw : v \text{ is a descendant of $w$ in $F$}\}$. 

%For example, for each $k\in\NN_0$, the class of graphs with treewidth at most $k$ is minor-closed.

%%%%%%%%%%%%%%%%%%%%%%%%%%%%%%%
\section{Lower Bounds}
\label{LowerBounds}

\subsection{Generalised Standard Examples}
\label{sect:stdexamples}

Many authors have noted that closures of rooted trees provide lower bounds on the defective and clustered chromatic  numbers~\citep{WoodSurvey,HS06,EKKOS15,OOW19,NSSW19,vdHW18}. Such graphs have been dubbed `standard examples'. The following definition and theorem generalises these results.

% The next two results provide well-known lower bounds on the defective and clustered chromatic numbers \citep{????}. They establish many of the lower bounds in \cref{TableTW,TableMinor}. 

Let $H$ be a graph equipped with a vertex ordering $V(H)=\{v_1,\dots,v_n\}$. For an integer $d\geq 1$, let $H^{(d)}$ be the following graph. Start with a complete $d$-ary tree $T$ of vertex-height $n$. Let $V(H^{(d)}):=V(T)$, where for each root-leaf path $(x_1,\dots,x_n)$ in $T$, 
$x_ix_j\in E(H^{(d)})$  if and only if $v_iv_j\in E(H)$. For example, $K_n^{(d)}$ is the closure of the complete $d$-ary tree of height $n$. Numerous papers have noted that $K_n^{(d+1)}$ is not $(n-1)$-colourable with defect $d$ (see \citep{WoodSurvey}). Here we prove the following generalisation.

\begin{thm}
\label{GeneralisedStandardExample}
Let $H$ be a graph equipped with a vertex ordering $V(H)=\{v_1,\dots,v_n\}$. Fix integers $d,k\geq 1$. Then the following are equivalent:
\begin{enumerate}[(1)]
\item $H^{(d)}$ is $k$-colourable with defect $d-1$, 
\item $H^{(d)}$ is $k$-colourable with clustering $d$, 
\item $H^{(d)}$ is properly $k$-colourable,
\item $H$ is properly $k$-colourable.
\end{enumerate}
\end{thm}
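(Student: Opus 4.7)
The easy implications $(3)\Rightarrow(1)$ and $(3)\Rightarrow(2)$ hold because a proper colouring has defect $0\le d-1$ and clustering $1\le d$. For $(4)\Rightarrow(3)$, I would colour each $x\in V(T)$ by $\phi(v_i)$, where $\phi$ is a proper $k$-colouring of $H$ and $i$ is the depth of $x$ in $T$; any edge of $H^{(d)}$ joins vertices on a common root-leaf path at depths $i\ne j$ with $v_iv_j\in E(H)$, and so receives distinct colours.

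The bulk of the work is $(1)\Rightarrow(4)$ and $(2)\Rightarrow(4)$, which I would handle simultaneously by induction on $n=|V(H)|$ via the following strengthening: \emph{if $c$ is a $k$-colouring of $H^{(d)}$ of either defect $d-1$ or clustering $d$, then some root-leaf path $(x_1,\dots,x_n)$ of $T$ satisfies $c(x_i)\ne c(x_j)$ whenever $v_iv_j\in E(H)$.} Setting $\phi(v_i):=c(x_i)$ then witnesses (4). The base case $n=1$ is trivial.

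For the inductive step, let $x_1$ be the root of $T$ with children $y_1,\dots,y_d$ rooting subtrees $T_1,\dots,T_d$. Each $T_j$ induces in $H^{(d)}$ a copy of $H'^{(d)}$, where $H':=H[\{v_2,\dots,v_n\}]$ carries the inherited ordering; moreover the restriction $c|_{V(T_j)}$ inherits whichever of the two bounds $c$ satisfies globally, since passing to an induced subgraph can neither enlarge a monochromatic component nor increase its maximum degree. By induction, each $T_j$ contains a root-leaf path $P_j=(z_2^j,\dots,z_n^j)$ whose colours give a proper $k$-colouring of $H'$.

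Now I aim to prepend $x_1$ to some $P_j$ and extend it to a root-leaf path of $T$ without conflict. The only possible obstruction is, for every $j\in\{1,\dots,d\}$, the existence of some $m_j\ge 2$ with $v_1v_{m_j}\in E(H)$ and $c(z_{m_j}^j)=c(x_1)$. If this held for every $j$, then $z_{m_1}^1,\dots,z_{m_d}^d$ would be $d$ distinct descendants of $x_1$ (distinct because the subtrees $T_j$ are vertex-disjoint), each adjacent to $x_1$ in $H^{(d)}$ and sharing its colour; this would give $x_1$ at least $d$ same-coloured neighbours (violating defect $d-1$) and place $x_1$ in a monochromatic component of size at least $d+1$ (violating clustering $d$). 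Hence some $P_j$ extends, as required. The only real obstacle is choosing the right inductive statement: applying (4) to $H'$ inductively would not suffice, since one needs a proper colouring of $H'$ that agrees with $c$ on some concrete root-leaf path of $T_j$; the strengthened claim packages exactly this information, and the disjointness of the $T_j$ makes the final counting argument collapse to a single line.
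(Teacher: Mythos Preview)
Your proof is correct and follows essentially the same approach as the paper: both show $(1)\Rightarrow(4)$ by exhibiting a root--leaf path of $T$ on which the given colouring restricts to a proper colouring of $H$, using the $d$-way branching to force one subtree in which the root avoids a same-coloured neighbour. The paper does this by a direct top-down iteration (at each step choose a child whose entire subtree contains no same-coloured neighbour of the current vertex), whereas you package the same idea as an induction on $n$; your separate treatment of $(2)\Rightarrow(4)$ is unnecessary since clustering $d$ immediately implies defect $d-1$, which is how the paper handles $(2)\Rightarrow(1)$.
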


\begin{proof}
We first prove that (4) $\Longrightarrow$ (3). Consider any proper $k$-colouring of $H$. Let $T$ be the rooted tree underlying $H^{(d)}$. For each root--leaf path $(x_1,\dots,x_n)$ of $T$, assign $x_i$ the colour assigned to $v_i$. We obtain a proper $k$-colouring of $H^{(d)}$, and (2) holds. 

It is immediate that (3) $\Longrightarrow$ (2) $\Longrightarrow$ (1). 

It remains to prove that (1) $\Longrightarrow$ (4).
Suppose $H^{(d)}$ is $k$-colourable with defect $d-1$. For each vertex $x$ of $H^{(d)}$,  let $T_x$ be the subtree rooted at $x$ of the complete $d$-ary tree underlying $H^{(d)}$. Let $x_1$ be the root vertex. Say $x_1$ is blue. If each of the subtrees of $T$ rooted at the children of $x_1$ has a blue neighbour of $x_1$, then $x_1$ has monochromatic degree at least $d$. Thus there is a child $x_2$ of $x_1$ such that no neighbour of $x_1$ in $T_{x_2}$ is blue. Repeating this argument, we find a root-leaf path $x_1,\dots,x_n$ in $H^{(d)}$, such that for each vertex $x_i$, no neighbour of $x_i$ in $T_{x_{i+1}}$ is assigned the colour of $x_i$. Thus the copy of $H$ induced by $\{x_1,\dots,x_n\}$ is properly coloured with $k$ colours, and (4) holds. 
\end{proof}

%%%%%%%%%%%%%%%%%%%%%%%%%%%%%%%%%%%
\subsection{Triangle-Free Graphs}
\label{TriangleFree}

We now prove lower bounds for triangle-free graphs with given treedepth. By construction, $H^{(d+1)}$ has treedepth at most $|V(H)|$, and if $H$ is $K_p$-subgraph-free, then so is $H^{(d+1)}$. Thus \cref{GeneralisedStandardExample} implies:

\begin{cor}
\label{DefectiveTreedepthLowerBound}
The defective chromatic number of the class of $K_p$-free graphs of treedepth $k$ is at least the chromatic number of the class of $K_p$-free graphs on $k$ vertices.  
\end{cor}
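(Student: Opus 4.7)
The plan is to apply \cref{GeneralisedStandardExample} to a worst-case choice of $H$. Let $r$ denote the chromatic number of the class of $K_p$-free graphs on $k$ vertices, and fix a $K_p$-free graph $H$ on $k$ vertices with $\chi(H)=r$. Equip $V(H)$ with an arbitrary ordering $\{v_1,\dots,v_k\}$. To bound the defective chromatic number of $K_p$-free graphs of treedepth at most $k$ from below by $r$, I would show that for every integer $d\geq 0$, the graph $H^{(d+1)}$ lies in this class and is not $(r-1)$-colourable with defect $d$.

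The first step is to verify the two structural properties of $H^{(d+1)}$. The underlying complete $(d+1)$-ary host tree $T$ has vertex-height $k$, so its closure has treedepth $k$; since $H^{(d+1)}$ is a spanning subgraph of that closure, $\td(H^{(d+1)})\leq k$. For $K_p$-subgraph-freeness, note that the construction only adds edges between pairs of vertices lying on a common root--leaf path of $T$. Hence any clique in $H^{(d+1)}$ is pairwise ancestor-comparable, and so lies on a single root--leaf path; that path carries a faithful copy of $H$, so the clique embeds into $H$ and must therefore have fewer than $p$ vertices.

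The second step is simply to invoke \cref{GeneralisedStandardExample}: since $H$ is not properly $(r-1)$-colourable, the equivalence gives that $H^{(d+1)}$ is not $(r-1)$-colourable with defect $d-1 \geq d$-defect either. (More precisely, $H^{(d+1)}$ plays the role of ``$H^{(d')}$'' for $d'=d+1$, and the theorem rules out $(r-1)$-colourings of defect $d'-1=d$.) Since $d$ was arbitrary, this gives the required lower bound on the defective chromatic number.

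There is no real obstacle here: the corollary is essentially a packaging of \cref{GeneralisedStandardExample} once one observes that the map $H\mapsto H^{(d+1)}$ preserves being $K_p$-subgraph-free and keeps the treedepth at most $|V(H)|=k$. The only point that needs a sentence of care is the clique observation, where one must note that pairwise ancestor-comparability of a vertex set in a rooted tree forces all vertices onto a single root--leaf path.
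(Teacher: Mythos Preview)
Your proposal is correct and follows exactly the paper's approach: the paper simply observes that $H^{(d+1)}$ has treedepth at most $|V(H)|$ and that $K_p$-subgraph-freeness is preserved by the $H\mapsto H^{(d+1)}$ construction, then invokes \cref{GeneralisedStandardExample}. Your write-up is just a more explicit unpacking of the same two observations (the clique-on-a-single-path argument and the treedepth bound via the host tree), together with the correct instantiation $d'=d+1$ so that the forbidden defect is $d'-1=d$; the garbled phrase ``defect $d-1 \geq d$-defect'' should be cleaned up, but your parenthetical already states the intended application correctly.
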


\cref{DefectiveTreedepthLowerBound} applies for triangle-free graphs, but not for girth at least 5 (since $H^{(d)}$ contains a 4-cycle whenever $d\geq 2$ and $H$ contains a path $v_iv_kv_j$ with $i<j<k$). 

\citet{Kim95} showed that the Ramsey number $R(3,t)\in\Omega(t^2/\log t)$. That is, there exist triangle-free graphs $H$ on $n=\Omega(t^2/\log t)$ vertices with $\alpha(H)< t$. Thus 
\[\chi(H) \geq n/\alpha(H) > n/t \geq \Omega(t/\log t) \geq \Omega(\sqrt{n/\log n}).\] 
\cref{DefectiveTreedepthLowerBound} with $p=3$ and \cref{Parameters} thus imply:

% $t/\log t \geq \sqrt{n}/ \log^x n$

% $t/\log t \geq ( \sqrt{t^2/ \log t} ) /\log^x (t^2 / \log t)$

% $t/\log t \geq ( t / \sqrt{\log t} ) /  (\log t)^x$

% $(\log t)^{x-1} t \geq ( \log t )^{-1/2} $

% $x-1 \geq -1/2 $

% $x \geq 1/2$

\begin{cor}
\label{k/logk}
The defective chromatic number of the class of triangle-free graphs with treedepth $k$ (or pathwidth $k$ or treewidth $k$) is in $\Omega(\sqrt{k/\log k})$. 
\end{cor}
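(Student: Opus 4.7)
The plan is to directly combine \cref{DefectiveTreedepthLowerBound} (with $p=3$) with Kim's lower bound on the off-diagonal Ramsey number $R(3,t)$, exactly as set up in the paragraph preceding the statement.

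First, I would invoke Kim's theorem to get, for every $k$, a triangle-free graph $H$ on $k$ vertices with independence number $\alpha(H)\le O(\sqrt{k\log k})$. From this the chromatic number satisfies
\[
\bigchi(H) \;\ge\; \frac{|V(H)|}{\alpha(H)} \;\ge\; \Omega\!\bigl(\sqrt{k/\log k}\bigr),
\]
so the chromatic number of the class of triangle-free graphs on $k$ vertices is at least $\Omega(\sqrt{k/\log k})$.

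Second, I would apply \cref{DefectiveTreedepthLowerBound} with $p=3$: the defective chromatic number of the class of triangle-free graphs of treedepth $k$ is at least the chromatic number of the class of triangle-free graphs on $k$ vertices. Combining with the previous step yields the desired $\Omega(\sqrt{k/\log k})$ lower bound in the treedepth case.

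Third, I would extend the bound to pathwidth and treewidth using the inequalities $\tw(G)\le\pw(G)\le\td(G)-1$ from \eqref{Parameters}. These inequalities mean that the class of triangle-free graphs with treewidth (resp.\ pathwidth) at most $k$ contains all triangle-free graphs with treedepth at most $k+1$, so the lower bound from the treedepth case transfers with only an additive constant shift in $k$ that is absorbed by the $\Omega(\cdot)$ notation. There is no genuine obstacle here: the result is packaged as a corollary precisely because all the work has already been done in \cref{GeneralisedStandardExample}, \cref{DefectiveTreedepthLowerBound}, and Kim's bound; the proof amounts to stringing these three facts together and checking that the parameter relationships preserve the asymptotics.
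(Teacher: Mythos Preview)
Your proposal is correct and matches the paper's own argument essentially line for line: the paper also derives $\bigchi(H)\ge\Omega(\sqrt{n/\log n})$ from Kim's bound, applies \cref{DefectiveTreedepthLowerBound} with $p=3$, and then invokes \eqref{Parameters} to transfer the bound to pathwidth and treewidth.
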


\begin{thm}
\label{KpFreeTredepthDefClusPropEqual}
Fix $p\geq 3$ and $k\geq 1$. 
Let $\GG$ be the class of $K_p$-free graphs of treedepth $k$.
Then the defective chromatic number, 
the clustered chromatic number, and
the chromatic number of $\GG$ are equal. 
\end{thm}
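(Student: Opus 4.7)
The inequalities $\dchi(\GG)\leq\cchi(\GG)\leq\chi(\GG)$ are immediate, so the plan is to prove $\dchi(\GG)\geq m$ where $m:=\chi(\GG)$. Note $m\leq k$ since treedepth $k$ implies proper $k$-colourability, so $m$ is attained by some $H\in\GG$; I will fix such $H$ together with a rooted forest $F$ of depth $k$ such that $H$ is a spanning subgraph of the closure of $F$. For each fixed $d\geq 1$ the goal is then to build a graph $G_d\in\GG$ that is not $(m-1)$-colourable with defect~$d$.

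The construction will be a ``forest-blowup'' of $F$ analogous to the graph $H^{(d+1)}$ of \cref{GeneralisedStandardExample}, but using the forest $F$ in place of a linear ordering. For each $v\in V(F)$, recursively define a rooted tree $T_v$ with a label map $\pi$: $T_v$ has a fresh root $v^\star$ with $\pi(v^\star):=v$, and for every child $c$ of $v$ in $F$ I attach $d+1$ pairwise-disjoint fresh copies of $T_c$ so that each of their roots becomes a child of $v^\star$ in $T_v$. Let $T$ be the disjoint union of $T_r$ over roots $r$ of $F$. A routine induction on $\td(F)$ gives $\mathrm{depth}(T)=\mathrm{depth}(F)=k$. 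Define $G_d$ on $V(T)$ by putting $u^\star v^\star\in E(G_d)$ iff $u^\star,v^\star$ are in $T$-ancestor-descendant relation \emph{and} $\pi(u^\star)\pi(v^\star)\in E(H)$. Then $G_d$ sits inside the closure of $T$ (so $\td(G_d)\leq k$), and any clique of $G_d$ lies on a $T$-chain whose $\pi$-image is an injective clique in $H$ (using that $H$ is loopless), so $G_d$ is $K_p$-free. Thus $G_d\in\GG$.

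Assuming $\phi\colon V(G_d)\to[m-1]$ has defect $d$, I will pick a ``representative'' $v^\star\in\pi^{-1}(v)$ for every $v\in V(F)$ by a top-down greedy descent through $T$: start with the unique root of $T$ per component as the representative of the corresponding root of $F$; and given a representative $v^\star$ with colour $\alpha:=\phi(v^\star)$, for each child $c$ of $v$ in $F$ examine the $d+1$ attached copies of $T_c$ beneath $v^\star$ and pick one with no vertex $w^\star$ satisfying $\pi(v^\star)\pi(w^\star)\in E(H)$ and $\phi(w^\star)=\alpha$. Such a copy must exist: otherwise each of the $d+1$ copies would contribute at least one monochromatic $G_d$-neighbour of $v^\star$, contradicting defect~$d$. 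The representative of $c$ is the root of the chosen copy. Iterating produces representatives satisfying $\phi(u^\star)\neq\phi(v^\star)$ whenever $u$ is an $F$-ancestor of $v$ with $uv\in E(H)$. Since $H\subseteq\mathrm{closure}(F)$, every edge of $H$ is such a pair, so the representatives induce a copy of $H$ in $G_d$ on which $\phi$ is a proper $(m-1)$-colouring --- contradicting $\chi(H)=m$.

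The step I expect to require the most care is making the construction ``small enough'' in treedepth: a direct application of \cref{GeneralisedStandardExample} to a linear ordering of $V(H)$ only yields a graph of treedepth $\leq|V(H)|$, which can easily exceed $k$ (for instance when $p=3$ and $k=4$, $C_5\in\GG$ has $\chi=3$ while every triangle-free graph on $4$ vertices is bipartite, so \cref{DefectiveTreedepthLowerBound} alone gives only $\dchi(\GG)\geq 2$). Branching along $F$ rather than along a linear ordering keeps the treedepth capped at $k$ while retaining the $(d+1)$-fold pigeonhole that drives the descent argument. The remaining verifications --- that $\mathrm{depth}(T)=k$, that $G_d$ is $K_p$-free, and that the monochromatic-avoidance constraint propagates consistently through the descent --- should be straightforward inductions on $\td(F)$.
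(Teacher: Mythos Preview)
Your proposal is correct and follows essentially the same approach as the paper: both construct the same ``forest-blowup'' $\widehat{H}$ (your $G_d$) by replacing each subtree rooted at a non-root vertex with $d+1$ copies, and both extract a properly coloured copy of $H$ via the same top-down greedy descent using the defect-$d$ pigeonhole. The only cosmetic difference is framing --- the paper starts from $c:=\dchi(\GG)$ and shows every $H\in\GG$ is properly $c$-colourable, whereas you start from $m:=\chi(\GG)$ and show $G_d$ is not $(m-1)$-colourable with defect $d$ --- but the construction and extraction arguments are identical.
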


\begin{proof}
Obviously, the defective chromatic number of $\GG$ is at most the clustered chromatic number of $\GG$, which is at most the chromatic number of $\GG$. Let $c$ be the defective chromatic number of $\GG$. That is, for some $d$ every graph in $\GG$ is $c$-colourable with defect $d$. We now show that every graph in $\GG$ is properly $c$-colourable.

Consider a graph $H\in \GG$. So $H$ is a spanning subgraph of the closure of some rooted tree with vertex-height $k$. Define $\widehat{H}$ as follows. (This construction is an extension of the definition of $H^{(d)}$ above.) Initialise $\widehat{H}:=H$. We now make a sequence of changes to $\widehat{H}$, where at each step, we  maintain the property that $\widehat{H}$ is a spanning subgraph of the closure of some rooted tree with vertex-height $k$. In particular, for $i=2,3,\dots,k$ and for each vertex $w$ at depth $i$ in $\widehat{H}$, replace the current subgraph $Q_w$ of $\widehat{H}$ rooted at $w$ by $d+1$ copies of $Q_w$ rooted at $w$, and for every edge $xy$ where $x$ is an ancestor of $w$ and $y$ is a descendent of $w$ or equal to $w$, add an edge between $x$ and the copy of $y$ in each of the $d+1$ copies of $Q_w$. The final graph $\widehat{H}$ is in $\GG$. By assumption, $\widehat{H}$ is $c$-colourable with defect $d$. 

We now show that $\widehat{H}$ contains a properly $c$-coloured induced subgraph isomorphic to $H$. Let $T$ be the tree of vertex-height $k$ rooted at $r$ such that $H$ is a subgraph of the closure of $T$. For each vertex $v$ in $T$, we will choose exactly one vertex $v'$ in $\widehat{H}$ to represent $v$. Consider $w$ in non-decreasing order of $\dist_T(r,w)$. Let $r$ represent itself. Now assume $w\neq r$. Let $v$ be the parent of $w$ in $T$. So some vertex $v'$ in $\widehat{H}$ is already chosen to represent $v$. Say $v'$ is blue. In $\widehat{H}$ there are $d+1$ copies $w_1,\dots,w_{d+1}$ of $w$ with parent $v'$. 
If for each $i\in\{1,\dots,d+1\}$, the subgraph of $\widehat{H}$ rooted at $w_i$ has a blue neighbour of $v'$, then $v'$ has monochromatic degree at least $d+1$, which is a contradiction. Thus for some $i\in\{1,\dots,d+1\}$, the subgraph of $\widehat{H}$ rooted at $w_i$ has no blue neighbour of $v'$. Let  this vertex $w_i$  represent $w$. By construction, the chosen vertices induce a properly $c$-coloured subgraph of $\widehat{H}$ isomorphic to $H$. Hence every graph in $\GG$ is properly $c$-colourable. 
\end{proof}

%There is an infinite graph $G$ in $\GG$ such that every graph $H$ in $\GG$ is isomorphic to an induced subgraph of $G$, where every vertex of $H$ at depth $i$ is mapped to a depth $i$ vertex in $G$. In particular, for every $H$ in $\GG$, the graph $H^{(d+1)}$ is isomorphic to a subgraph of $G$, where every vertex of $H^{(d+1)}$ at depth $i$ is mapped to a depth $i$ vertex in $G$. By assumption and by the Erd\H{o}s-Gallai Theorem, $G$ is $c$-colourable with defect $d$. By the argument in \cref{GeneralisedStandardExample}, every graph in $\GG$ can be found as a properly $c$-colourable subgraph in $G$, as claimed. 

We now use a variant of the approach of \citet{DK17} to improve the lower bound on the chromatic number in \cref{k/logk} to $(1-o(1))\sqrt{2k}$.

\begin{thm}
\label{TriangleFreeTreedepthProperLowerBound}
For any integer $k\geq 1$ there is a triangle-free graph with treedepth $\binom{k+1}{2}$ and chromatic number at least $k$. 
\end{thm}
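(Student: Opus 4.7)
The plan is induction on $k$, adapting the construction of \citet{DK17} (originally phrased for treewidth) to the setting of treedepth. The base case $k=1$ is handled by $K_1$: triangle-free, $\chi=1$, and $\td=1=\binom{2}{2}$.

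For the inductive step, assume $H$ is triangle-free with $\td(H)\leq\binom{k}{2}$ and $\chi(H)\geq k-1$. I would construct $G$ by taking $N$ disjoint copies of $H$ for some large $N$, and attaching a ``scaffold'' of $k$ new rooted levels of vertices above them. Concretely, fix a rooted forest $F$ of depth $\binom{k}{2}$ whose closure contains $H$, and build a rooted tree $T$ of depth $k$ whose leaves are made the parents of the roots of the $N$ copies of $F$. The combined rooted forest has depth $\binom{k}{2}+k=\binom{k+1}{2}$, yielding the treedepth bound. Edges between the scaffold vertices of $T$ and the vertices of each copy of $H$ are chosen so that each scaffold vertex's neighbourhood within any single copy is an independent set of $H$; this makes $G$ triangle-free, since any triangle through a scaffold vertex would require two adjacent vertices lying in a common copy of $H$.

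The crux is to show $\chi(G)\geq k$. Suppose for contradiction that $c$ is a proper $(k-1)$-colouring of $G$. Each copy of $H$ uses every one of the $k-1$ colours, since $\chi(H)\geq k-1$. For $N$ sufficiently large, a pigeonhole argument produces many copies of $H$ sharing the same ``colour profile'' on the vertices adjacent to a given scaffold vertex. Propagating this shared profile up through the $k$ scaffold levels, at each level some scaffold vertex is forced to avoid the colour of an entire colour class in a copy below it, so the set of available colours for scaffold vertices shrinks level-by-level; after $k$ levels no colour remains, contradicting the existence of $c$.

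The main obstacle is specifying the exact adjacency pattern between $T$ and the copies of $H$, together with the bookkeeping of forbidden colours at each level. One must carefully select which independent sets of $H$ are linked to each scaffold vertex, and set the branching factor of $T$ and the number $N$ of copies so that the pigeonhole step iterates $k$ times. This is precisely where the treewidth construction of \citet{DK17} needs the most rework: their argument is phrased around tree decompositions, whereas here the scaffold must be built level-by-level in a rooted forest, and the colour-class obstruction must be tracked across all $k$ levels rather than inside a single bag.
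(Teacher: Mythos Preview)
Your proposal is a sketch rather than a proof, and the part you flag as ``the main obstacle'' is exactly where the argument currently fails. You assert that ``the set of available colours for scaffold vertices shrinks level-by-level'', but nothing in the construction forces this. If the scaffold vertices form an independent set in $G$ (which is what your triangle-freeness argument implicitly assumes, since you only discuss scaffold-to-$H$ edges), then the colour constraints on $s_1,\dots,s_k$ are independent of one another: each $s_i$ is forbidden the colours appearing in its own neighbourhood, and there is no mechanism by which the constraint on $s_i$ feeds into the constraint on $s_{i+1}$. If instead you add scaffold-to-scaffold edges from the closure of $T$, you must rule out triangles of the form $(s_i,s_j,v)$ with $v$ in a copy of $H$, which your triangle-freeness paragraph does not do; and even then, adjacency $s_is_{i+1}$ only forbids $\col(s_i)$ for $s_{i+1}$, not the colour that $s_i$ itself was forbidden. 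So the ``shrinking'' never gets off the ground. The pigeonhole step is also underspecified: you need to say in advance which independent sets each scaffold vertex is wired to, and then argue that for \emph{every} colouring some scaffold vertex hits the right colour classes---this is the entire content of the construction, not a detail to be filled in later.

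The paper's proof is organised quite differently. It does not build $G$ inductively from smaller instances; instead it defines a single graph in one shot (for each vertex $v$ and each independent set $I$ on the root-to-$v$ path, add a child of $v$ with neighbourhood exactly $I$), and the induction is in the \emph{analysis} of an arbitrary proper colouring. The invariant is not ``some vertex has few available colours'' but rather ``some root-to-$v$ path carries an independent set using at least $k$ distinct colours''. The inductive step is a dichotomy: starting from such an $I$, one picks a child $w_1$ non-adjacent to $I$; either $w_1$ introduces a new colour (done), or one iteratively swaps vertices of $I$ for newly created vertices $w_1,w_2,\dots$, each adjacent only to the already-swapped part of $I$, until the $w_j$'s themselves form a rainbow independent set of size $k+1$. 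This swap argument is what consumes the extra $k+1$ levels, and it is the idea your sketch is missing.
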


\begin{proof}
Let $T$ be the tree rooted at a vertex $r$, and let $G$ be a subgraph of the closure of $T$ defined as follows. For each vertex $v$ of $T$ considered in non-decreasing order of $\dist_T(r,v)$, if $P_v$ is the $rv$-path in $T$, then for each independent set $I$ of $G[V(P_v)]$ introduce one child of $v$ with neighbourhood $I$. Consider any proper colouring of $G$. Let $\col(v)$ be the colour assigned to each vertex $v$. The following claim implies the theorem.

\textbf{Claim.} For each integer $k\geq 1$ there is a vertex $v$ in the first $\binom{k+1}{2}$ layers of $G$ and there is an independent set of $G[V(P_v)]$ assigned at least $k$ distinct colours.

We proceed by induction on $k$. The $k=1$ case is true with $v=r$. Assume the claim is true for some $k\geq 1$. Thus there is a vertex $w_0$ in the first $\binom{k+1}{2}$ layers of $G$ and there is an independent set $I$ of $G[V(P_{w_0})]$ assigned at least $k$ distinct colours.

By construction, there is a child $w_1$ of $w_0$ adjacent to no vertices in $I$. If no vertex in $I$ is assigned the same colour as $w_1$, then $I\cup\{w_1\}$ is an independent set of 
$G[V(P_{w_1})]$ assigned at least $k+1$ distinct colours, and $w_1$ is in the first $\binom{k+1}{2}+1\leq\binom{k+2}{2}$ layers of $G$. Now assume $\col(y_1)=\col(w_1)$ for some vertex $y_1\in I$.

We now construct a sequence of distinct vertices $w_1,\dots,w_{k+1}$ in $T$ and a sequence of distinct vertices $y_1,\dots,y_k$ in $I$, such that:
\begin{itemize}
\item for each $j\in\{1,\dots,k\}$, 
$\col(y_j)=\col(w_j)$
\item for each $j\in\{1,\dots,k+1\}$, $w_j$ is a child of $w_{j-1}$
\item for each $j \in \{1, \dots, k+1\}$, the vertex $w_j$ is adjacent to each vertex in  $\{y_1,\dots,y_{j-1}\}$ and is adjacent to no vertex in $(I\setminus\{y_1,\dots,y_{j-1}\})\cup\{w_1,\dots,w_{j-1}\}$. 
\end{itemize}
These properties hold for $w_1$ as shown above. 

Assume that $w_1,\dots,w_j$ and $y_1,\dots,y_{j-1}$ satisfy these properties for some $j\in\{1,\dots,k\}$. Thus $\col(w_j)\neq\col(y_i)=\col(w_i)$ for all $i\in\{1,\dots,j-1\}$. 
If no vertex in $I\setminus\{y_1,\dots,y_{j-1}\}$ is assigned the same colour as $w_j$, then $(I\setminus\{y_1,\dots,y_{j-1}\})\cup\{w_1,\dots,w_{j}\}$ is an independent set of $G[V(P_{w_j})]$ assigned $k+1$ distinct colours, where $w_j$ is in the first $\binom{k+1}{2}+j\leq \binom{k+2}{2}$ levels of $G$, and we are done. Thus we may assume that $\col(y_j)=\col(w_j)$ for some vertex $y_j$ in $I\setminus\{y_1,\dots,y_{j-1}\}$. By the definition of $G$, there is a child $w_{j + 1}$ of $w_j$ that 
is adjacent only to the independent set $\{y_1, y_2, \dots, y_j\}$. This allows us
to extend our sequence.

Repeating this step constructs the desired sequences. In particular,  
$\{w_1,\dots,w_{k+1}\}$ is an independent set of  $G[V(P_{w_{k+1}})]$ assigned at least $k+1$ distinct colours, and $w_{k+1}$ is in the first $\binom{k+1}{2}+k+1 = \binom{k+2}{2}$ layers of $G$, as claimed. 
\end{proof}

\cref{DK,KpFreeTredepthDefClusPropEqual,TriangleFreeTreedepthProperLowerBound} imply:

\begin{cor}
\label{TriangleFreeTreedepth}
For any integer $k\geq 1$ for the class of triangle-free graphs with treedepth $k$, the chromatic number, defective chromatic number and clustered chromatic number are equal, and are at least $(1-o(1))\sqrt{2k}$ and at most $\ceil{\frac{k+2}{2}}$.
\end{cor}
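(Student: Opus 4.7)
The statement is the direct combination of three preceding results, so the plan is to show that each of the three assertions (equality of the three chromatic parameters, the upper bound, and the lower bound) follows by a short reduction.

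First, for the equality of the chromatic number, defective chromatic number, and clustered chromatic number, I would simply invoke \cref{KpFreeTredepthDefClusPropEqual} with $p=3$: triangle-free is exactly $K_3$-free, so that theorem gives
\[
\bigchi(\GG) = \cchi(\GG) = \dchi(\GG)
\]
for the class $\GG$ of triangle-free graphs with treedepth $k$. Once this is established, only the proper chromatic number of $\GG$ needs to be bounded.

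For the upper bound $\lceil\frac{k+2}{2}\rceil$, I would use the inequalities from \cref{Definitions}, namely $\tw(G)\leq\pw(G)\leq\td(G)-1$. So every graph in $\GG$ has treewidth at most $k-1$, and \cref{DK} applied with parameter $k-1$ yields a proper $\lceil\frac{(k-1)+3}{2}\rceil=\lceil\frac{k+2}{2}\rceil$-colouring.

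For the lower bound, I would apply \cref{TriangleFreeTreedepthProperLowerBound} after a change of variable. That theorem produces, for each integer $m\geq 1$, a triangle-free graph of treedepth $\binom{m+1}{2}$ and chromatic number at least $m$. Given $k$, choose the largest $m$ with $\binom{m+1}{2}\leq k$, i.e.\ $m=\lfloor(-1+\sqrt{1+8k})/2\rfloor$, which satisfies $m=(1-o(1))\sqrt{2k}$ as $k\to\infty$; since treedepth is monotone under adding isolated vertices (or taking induced subgraphs of a graph of treedepth $k$), the resulting graph witnesses $\bigchi(\GG)\geq(1-o(1))\sqrt{2k}$.

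There is no real obstacle here since the substantive work is already done in \cref{DK,KpFreeTredepthDefClusPropEqual,TriangleFreeTreedepthProperLowerBound}; the only mildly delicate point is being careful that $m=(1-o(1))\sqrt{2k}$ follows from the floor arithmetic, but that is immediate from solving the quadratic $m(m+1)/2\leq k$.
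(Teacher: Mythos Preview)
Your proposal is correct and matches the paper's approach exactly: the paper simply states that \cref{DK,KpFreeTredepthDefClusPropEqual,TriangleFreeTreedepthProperLowerBound} imply the corollary, and your three-step reduction (equality via $p=3$, upper bound via $\tw\leq\td-1$ and \cref{DK}, lower bound via the change of variable in \cref{TriangleFreeTreedepthProperLowerBound}) is precisely how those citations unpack.
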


We have the following corollary of \cref{TriangleFreeTreedepthProperLowerBound} for graphs of given circumference, since $\cir(H)< 2^{\td(H)}$ for every graph $H$ (see \citep[Proposition~6.2]{Sparsity}). 

\begin{cor}
\label{CircumferenceLowerBound}
The chromatic number, defective chromatic number and clustered chromatic number of the class of triangle-free graphs with circumference $k$ is in $\Omega(\sqrt{\log k})$.
\end{cor}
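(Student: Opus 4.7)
The plan is to combine the triangle-free treedepth construction of \cref{TriangleFreeTreedepthProperLowerBound} with the cited inequality $\cir(H)<2^{\td(H)}$, transferring a lower bound in terms of treedepth into a lower bound in terms of circumference. The key point is that triangle-free graphs with very restricted treedepth (hence with very restricted circumference) can still be forced to have large chromatic number, and for such graphs the three parameters of interest all agree.

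First, for each positive integer $n$, invoke \cref{TriangleFreeTreedepthProperLowerBound} to obtain a triangle-free graph $H_n$ with $\td(H_n)\le\binom{n+1}{2}$ and $\bigchi(H_n)\geq n$. By \cref{KpFreeTredepthDefClusPropEqual} applied with $p=3$, the chromatic number, defective chromatic number and clustered chromatic number coincide on any class of triangle-free graphs with bounded treedepth, and in particular $\dchi(H_n),\cchi(H_n)\geq n$ as well. This takes care of the ``same-colour'' content of the statement: any lower bound on $\bigchi$ for this family is automatically a lower bound on $\dchi$ and $\cchi$.

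Next, apply the cited bound $\cir(H)<2^{\td(H)}$ to $H=H_n$, giving $\cir(H_n)<2^{\binom{n+1}{2}}$. So for any target circumference $k$, one chooses the largest $n$ with $2^{\binom{n+1}{2}}\leq k+1$. The graph $H_n$ then lies in the class of triangle-free graphs with circumference at most $k$ and has chromatic, defective and clustered chromatic numbers all at least $n$. Inverting the bound $2^{\binom{n+1}{2}}\leq k+1$ expresses $n$ as a function of $k$, which yields the claimed asymptotic lower bound for the three parameters in terms of the circumference.

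The main work is already done by the two cited ingredients: the bulk of the creativity is in \cref{TriangleFreeTreedepthProperLowerBound} and the fact that bounded treedepth forces bounded circumference. The only remaining step is the quantitative inversion, and this is a direct arithmetic calculation rather than a genuine obstacle.
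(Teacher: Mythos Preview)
Your approach is the same as the paper's: both derive the corollary from \cref{TriangleFreeTreedepthProperLowerBound} together with the inequality $\cir(H)<2^{\td(H)}$. You correctly add \cref{KpFreeTredepthDefClusPropEqual} to pass from the chromatic lower bound to defective and clustered lower bounds, an ingredient the paper's one-line justification leaves implicit.

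There is a phrasing slip and a real quantitative gap. The slip: $\dchi$ and $\cchi$ are invariants of graph classes, not of single graphs, so ``$\dchi(H_n)\geq n$'' is not well-posed. What you want is that the class of triangle-free graphs with treedepth at most $\binom{n+1}{2}$ has defective (and hence clustered and proper) chromatic number at least $n$, by combining \cref{TriangleFreeTreedepthProperLowerBound} with \cref{KpFreeTredepthDefClusPropEqual}; since this class sits inside the class of triangle-free graphs with circumference below $2^{\binom{n+1}{2}}$, the larger class inherits the lower bound. The witnessing graph is not $H_n$ itself but the blown-up graph $\widehat{H_n}$ produced in the proof of \cref{KpFreeTredepthDefClusPropEqual}.

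The gap: you should actually do the inversion you defer to the last line. From $2^{\binom{n+1}{2}}\leq k+1$ one gets $\binom{n+1}{2}\leq\log_2(k+1)$, hence only $n=\Theta(\sqrt{\log k})$. So this route delivers an $\Omega(\sqrt{\log k})$ lower bound, not the stated $\Omega(\log k)$. The paper's own justification cites exactly the same two ingredients and so suffers from the same shortfall, but you should not assert that the arithmetic ``yields the claimed asymptotic lower bound'' without checking that it does.
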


% \begin{proof}
% Let $T$ be as in \cref{DefectiveLowerBoundLemma}, where $h$ is the maximum integer such that $2^{\binom{h+1}{2}} \leq k$. 
% So $2^{\binom{h+2}{2}} > k$ and 
% $h^2\geq \binom{h+2}{2} > \log_2 k$.  
% Note that $\cir(G)< 2^{\td(G)}$ for every graph $G$. 
% So $\cir(G_T)< 2^{\binom{h+1}{2}}\leq k$.
% By \cref{DefectiveLowerBoundLemma}, if $G_T$ is $p$-colourable with defect $d$, then $h\leq 2^p$, 
% implying $\sqrt{\log_2 k} < h \leq 2^p$ and 
% $p> \log_2 \sqrt{\log_2 k} = \frac12 \log_2\log_2 k$. 
% \end{proof}

Now consider triangle-free graphs with given treewidth.

\begin{thm}
\label{TriangleFreeTreewidth}
Fix $p\geq 3$ and $k\geq 1$. 
Let $\GG$ be the class of $K_p$-free graphs of treewidth $k$.
Then the defective chromatic number, 
the clustered chromatic number, and
the chromatic number of $\GG$ are equal. 
\end{thm}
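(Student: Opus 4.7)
The plan is to mirror the proof of \cref{KpFreeTredepthDefClusPropEqual} (the treedepth case), replacing the rooted-tree closure blow-up with a tree-decomposition blow-up. Since $\dchi(\GG)\le\cchi(\GG)\le\bigchi(\GG)$ holds automatically, it suffices to show $\bigchi(\GG)\le\dchi(\GG)$. Set $c:=\dchi(\GG)$ with a corresponding defect bound $d$, fix $H\in\GG$, and aim to build an auxiliary graph $\widehat H\in\GG$ such that any $c$-colouring of $\widehat H$ with defect $d$ contains a properly coloured induced copy of $H$; this gives $\bigchi(H)\le c$.

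To construct $\widehat H$, I would fix a \emph{nice} tree decomposition $(T,(B_x))$ of $H$ rooted at $r$ (each non-root node is of introduce, forget, or join type), set $m:=kd+1$, and define $\widehat H$ recursively top-down: at each introduce node $x$ of $T$ (adding a single vertex $v$), replace the subtree of $T$ rooted at $x$ by $m$ disjoint copies, each attached to $p(x)$ via the shared interface $B_{p(x)}$, then recursively apply the same replication inside each of the copies. The resulting graph inherits a nice tree decomposition of width at most $k$, and for each bag $B$ of $\widehat H$ the ``forget which replica'' map carries $\widehat H[B]$ isomorphically onto $H[B^{*}]$ for the corresponding bag $B^{*}$ of $H$. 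Since any $K_p$ in $\widehat H$ must lie inside a single bag, $K_p$-freeness transfers from $H$ to $\widehat H$, giving $\widehat H\in\GG$.

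Given a $c$-colouring $\phi$ of $\widehat H$ with defect $d$, I would construct an embedding $\pi\colon V(H)\hookrightarrow V(\widehat H)$ by processing $v\in V(H)$ in the order it is introduced in $T$. Assume $\pi$ is already defined on every vertex introduced earlier, and that we are inside the ancestor branch of $\widehat H$ determined by those choices. Within this branch, the introduction step at $x(v)$ offers $m=kd+1$ copies $v^{(1)},\dots,v^{(m)}$ of $v$; each $v^{(j)}$ is adjacent in $\widehat H$ to $\pi(u)$ exactly for $u\in A(v):=\{u\in B_{p(x(v))}:uv\in E(H)\}$, and to no other previously chosen vertex. For each fixed $u\in A(v)$, the defect bound applied at $\pi(u)$ limits the number of $v^{(j)}$'s with $\phi(v^{(j)})=\phi(\pi(u))$ to at most $d$; since $|A(v)|\le k$, at most $kd$ copies are ``bad'', so at least one good copy remains, which I set as $\pi(v)$. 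By construction $\pi(V(H))$ induces a copy of $H$ in $\widehat H$ on which $\phi$ is proper, so $\bigchi(H)\le c$.

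The main subtlety is coordinating the up to $k$ ancestor constraints at each newly introduced vertex, which forces the replication factor $m=kd+1$ (as opposed to $d+1$ in the treedepth proof, where each vertex has a unique parent in the rooted tree). Using a \emph{nice} tree decomposition is essential here: if a single introduce step added two new vertices joined by an edge of $H$, the defect condition would not directly prevent those two vertices from receiving the same colour inside every replica, so the restricted colouring could fail to be proper on that within-bag edge.
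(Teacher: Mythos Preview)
Your proof is correct and follows essentially the same approach as the paper's: build a tree-decomposition blow-up of $H$ that stays in $\GG$, then extract a properly $c$-coloured copy of $H$ from any defect-$d$ $c$-colouring of the blow-up. The only notable difference is the direction in which the defect bound is applied: you fix each already-embedded ancestor $\pi(u)$ and bound how many of the $m$ candidate copies of $v$ share its colour (forcing $m=kd+1$), whereas the paper fixes $v$ and, for each subtree below $h(v)$, selects one of $d+1$ replicas containing no neighbour of $v$ in $v$'s colour---so only $m=d+1$ replicas are needed, but otherwise the arguments are interchangeable.
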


\begin{proof}
Obviously, the defective chromatic number of $\GG$ is at most the clustered chromatic number of $\GG$, which is at most the chromatic number of $\GG$. Let $c$ be the defective chromatic number of $\GG$. That is, for some $d$ every graph in $\GG$ is $c$-colourable with defect $d$. We now show that every graph in $G\in\GG$ is properly $c$-colourable.

Consider a tree-decomposition $(B_x:x\in V(T))$ of $G$ with width at most $k$. Root $T$ at a node $r$. Normalise the tree-decomposition so 
that for each edge $xy\in E(T)$ with $x$ the parent of $y$, $|B_y\setminus B_x|=1$, and for each vertex $v$ of $G$ there is an edge $xy\in E(T)$ with $x$ the parent of $y$, such that $B_y\setminus B_x=\{v\}$. In this case, let $h(v):=y$ (the `home' of $v$). Let $\preceq$ be the partial order on $V(G)$, where $v\prec w$ if and only if $h(v)$ is an ancestor of $h(w)$ in $T$. The \defn{depth} of $v$ is $\dist_T(r,h(v))$. 

We now define a sequence of $K_p$-free graphs $G_1,G_2,\dots,G_m$, each with a rooted normalised tree-decomposition $(B_x:x\in V(T_i))$ of width at most $k$, where $G_1:=G$. In each graph $G_i$, each vertex will be `processed' or `unprocessed'. Initially, all vertices in $G_1$ are unprocessed. Suppose that $G_1,\dots,G_i$ are defined, and some vertex in $G_i$ is unprocessed. Let $v$ be an unprocessed vertex in $G_i$ at minimum depth. Let $y:= h(v)\in V(T_i)$. Let $U_1,\dots,U_m$ be the subtrees of $T_i$ rooted at the children of $y$. Let $V_j$ be the set of vertices $w\in V(G_i)$ where $h(w)\in V(U_j)$. Let $U:=\bigcup_{j=1}^m V_j$. Let $G_{i+1}$ be obtained from $G_i$ by making $d+1$ copies of each vertex in $U$. For each edge $pq$ of $G[U]$, add an edge to $G_{i+1}$ between the $\ell$-th copy of $p$ and the $\ell$-th copy of $q$, for each $\ell$. For each edge $wp\in E(G_i)$ where $w\in B_y$ and $p\in U$, add an edge between $w$ and the $\ell$-th copy of $p$, for each $\ell$. This includes the case $w=v$. 
Construct a normalised tree-decomposition of $G_{i+1}$ as follows. Let $T_{i+1}$ be obtained from $T_i$ by taking $d+1$ copies of each $U_j$, where the root of each copy is a child of $y$. For each vertex $p\in U$, if $p\in B_x$ then place the $\ell$-th copy of $p$ in the bag corresponding to the $\ell$-th copy of $x$ in $T_{i+1}$. We obtain a normalised tree-decomposition $(B'_x:x\in V(T_{i+1}))$ of $G_{i+1}$, such that for each vertex $w\in U$, if $h(w)=x$ then the home bags of the $d+1$ copies of $w$ are the $d+1$ copies of $x$. In $G_{i+1}$ say that $v$ is processed. We now show that $G_{i+1}$ is $K_p$-free. Suppose that $X$ is a set of vertices in $G_{i+1}$ that induce $K_p$. By the Helly property, $X$ is a subset of some bag $B'_x$. By construction, $G_{i+1}[B'_x]$ is isomorphic to the subgraph of $G_i$ induced by some bag $B_x$, implying that $G_i$ contains $K_p$. This contradiction shows that $G_{i+1}$ is $K_p$-free. 

This completes the definition of $G_1,G_2,\dots$. Note that for each vertex $v$ of $G$ the depth of each copy of $v$ always equals the original depth of $v$. In $G_i$, let $d_i$ be the minimum depth of an unprocessed vertex, and let $n_i$ be the number of processed vertices in $G_i$ at depth $d_i$. By construction, $(d_{i+1},n_{i+1})$ is lexicographically greater than $(d_i,n_i)$. Thus the sequence $G_1,G_2,\dots$ is finite. Let $G_n$ be the final graph, in which every vertex is processed. By construction, $G_n$ has treewidth at most $k$ and is $K_p$-free. 

Consider a $c$-colouring of $G_n$ with defect $d$. 
We now show that $G_n$ contains a properly $c$-coloured induced subgraph isomorphic to $G$. Say $v$ is the unprocessed vertex in $G_i$ chosen by the above process. 
Use the notation above. Let $y:= h(v)\in V(T_i)$. Let $U_1,\dots,U_m$ be the subtrees of $T_i$ rooted at the children $z_1,\dots,z_m$ of $y$. Let $V_j$ be the set of vertices $w\in V(G_i)$ where $h(w)\in V(U_j)$. Say $v$ is blue. For each $j\in\{1,\dots,m\}$, if each of the $d+1$ copies of $V_j$ in $G_{i+1}$ has a blue neighbour of $v$, then $v$ has defect $d+1$, which is a contradiction. So some copy of $V_j$ in $G_{i+1}$ has no blue neighbour of $v$. Say that this copy of $V_j$ and the corresponding edge of $T_{i+1}$ is `chosen' by $v$. This chosen edge remains in $T_n$. By construction, the union of the bags in $(B_x:x\in V(T_n))$ at nodes that are endpoints of the chosen edges in $T_n$ contains a properly $c$-coloured induced subgraph isomorphic to $G$. Therefore every graph in $\GG$ is properly $c$-colourable. 
\end{proof}

\cref{DK,TriangleFreeTreewidth} imply:

\begin{cor}
\label{TriangleFreeTreewidthDefectiveClusteredProper}
Let $\GG$ be the class of triangle-free graphs with treewidth at most $k$. Then the  defective chromatic number of $\GG$, the clustered chromatic number of $\GG$, and the proper chromatic number of $\GG$ all equal $\ceil{\frac{k+3}{2}}$.
\end{cor}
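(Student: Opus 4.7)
The statement is essentially a direct combination of two results already available in the excerpt, so the proof plan is immediate and there is no real obstacle beyond recognising this.

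My plan is to apply Theorem \ref{TriangleFreeTreewidth} with $p=3$. Triangle-free graphs are exactly $K_3$-free graphs, so that theorem tells us that on the class $\GG$ of triangle-free graphs with treewidth at most $k$, we have
\[
\dchi(\GG) \;=\; \cchi(\GG) \;=\; \bigchi(\GG).
\]
This collapses all three invariants to a single value.

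Next, I would invoke Theorem \ref{DK} of \citet{DK17}, which computes this common value: the chromatic number of the class of triangle-free graphs with treewidth $k$ equals $\ceil{\frac{k+3}{2}}$. Combining the two yields
\[
\dchi(\GG) \;=\; \cchi(\GG) \;=\; \bigchi(\GG) \;=\; \ceil{\tfrac{k+3}{2}},
\]
as required.

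There is no real obstacle here: both ingredients are stated earlier in the paper, and the only thing to observe is that Theorem \ref{TriangleFreeTreewidth} is formulated for $K_p$-free graphs for any $p\geq 3$, so specialising to $p=3$ exactly produces the triangle-free class considered in Theorem \ref{DK}. (A minor cosmetic point worth noting in the write-up is that the upper bound $\bigchi(\GG)\leq \ceil{\frac{k+3}{2}}$ already implies $\cchi(\GG)\leq \ceil{\frac{k+3}{2}}$ trivially, while the matching lower bound for $\dchi(\GG)$ is what Theorem \ref{TriangleFreeTreewidth} supplies by pushing the proper-colouring lower bound down to the defective setting.)
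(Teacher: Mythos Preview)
Your proposal is correct and matches the paper's approach exactly: the corollary is stated as an immediate consequence of \cref{DK,TriangleFreeTreewidth}, with the latter specialised to $p=3$.
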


%%%%%%%%%%%%%%%%%%%%%%%%%%%%%%%%%%%
\subsection{Feedback Vertex Number}

We now prove two lower bounds on the clustered chromatic number of graphs with given feedback vertex number and given girth.

\begin{prop}
\label{FVN1TriangleFreeNotDefectLowerBound}
For each integer $d\geq 0$, there is a triangle-free graph $G$ with $\fvn(G)=1$ that is not 2-colourable with defect $d$.
\end{prop}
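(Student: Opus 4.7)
The plan is to invoke the generalised standard example \cref{GeneralisedStandardExample} with $H = C_5$ (equipped with its natural cyclic vertex ordering $v_1, \dots, v_5$) and set $G := C_5^{(d+1)}$. By construction, $G$ sits on a complete $(d+1)$-ary tree $T$ of vertex-height $5$, with edges only between ancestor-descendant pairs in $T$, so that each root-leaf path of $T$ induces a copy of $C_5$ in $G$.

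I would first verify that $G$ is triangle-free: any three pairwise adjacent vertices of $G$ must lie on a common root-leaf path (since every edge joins an ancestor to a descendant), and the induced triangle would then project onto a triangle in $C_5$, which is impossible. Next I would show $\fvn(G) = 1$. The ancestor-descendant constraint on edges, together with the fact that the only cycle in any subgraph of $C_5$ is $C_5$ itself, implies that every cycle of $G$ is a $5$-cycle using exactly one vertex at each depth $1, \dots, 5$ along a single root-leaf path of $T$. In particular every cycle passes through the unique depth-$1$ vertex, namely the root $r$ of $T$; hence $G - r$ is a forest, giving $\fvn(G) \le 1$, and equality holds because $G$ itself contains a $C_5$ (any root-leaf path induces one).

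Finally, because $C_5$ is not properly $2$-colourable, \cref{GeneralisedStandardExample} applied with $k = 2$ and tree arity $d + 1$ immediately yields that $G$ is not $2$-colourable with defect $d$. The main obstacle is showing that every cycle of $G$ passes through the root; this requires analysing the cycle structure inherited from $H$, and is what makes $H = C_5$ the correct choice: as the smallest triangle-free non-$2$-colourable graph, its unique cycle uses every vertex, so that every cycle of $C_5^{(d+1)}$ is pinned to a single root-leaf path and therefore contains the root—in contrast to, say, Mycielskian constructions, where short cycles in $H$ would give cycles of $H^{(d+1)}$ avoiding the root.
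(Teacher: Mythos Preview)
Your approach is correct and genuinely different from the paper's. The paper builds an explicit example by hand: take a $(d+1)$-leaf star $T_0$, attach $2d+1$ pendant leaves to every vertex of $T_0$, and add a single apex vertex $\alpha$ adjacent to all the new leaves; a short pigeonhole argument then forces every vertex of $T_0$ to receive the colour of $\alpha$ in any $2$-colouring with defect $d$. Your construction instead reuses \cref{GeneralisedStandardExample} with $H=C_5$, so the non-$2$-colourability comes for free from $\chi(C_5)=3$. Your route is more systematic and explains why such an example should exist, at the cost of relying on the earlier machinery; the paper's route is self-contained.

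One step in your argument needs correction. You assert that every cycle of $G=C_5^{(d+1)}$ is a $5$-cycle lying on a single root--leaf path of $T$. This is false once $d\ge 1$: if $x_4$ is any depth-$4$ vertex with distinct children $y,y'$, then $(r,y,x_4,y')$ is a $4$-cycle in $G$ using two depth-$5$ vertices from different root--leaf paths. (Indeed the paper itself remarks, just after \cref{DefectiveTreedepthLowerBound}, that $H^{(d)}$ contains a $4$-cycle whenever $d\ge 2$ and $H$ has a path $v_iv_kv_j$ with $i<j<k$; here $v_1v_5v_4$ is such a path in $C_5$.) Your weaker conclusion---that every cycle of $G$ contains $r$---is nevertheless correct and follows more directly: with the cyclic ordering of $C_5$ the only edge of $H$ not between consecutive indices is $v_1v_5$, so every edge of $G-r$ joins a vertex to its parent in $T$, whence $G-r\subseteq T$ is a forest. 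This is all you need for $\fvn(G)\le 1$.
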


\begin{proof}
As illustrated in \cref{FVN1}, let $T_0$ be the $(d+1)$-leaf star. Let $T$ be obtained from $T_0$ by adding, for each vertex $v$ in $T_0$, a set $L_v$ of $2d+1$ leaves adjacent to $v$. Let $G$ be obtained from $T$ by adding one vertex $\alpha$ adjacent to all the leaves of $T$. So $\fvn(G)=1$ and $G$ is triangle-free (since $N_G(\alpha)$ is an independent set). Suppose that $G$ is 2-colourable with defect $d$. Say $\alpha$ is blue, and the other colour is red. For each vertex $v$ in $T_0$, at most $d$ vertices in $L_v$ are blue (since they are all adjacent to $\alpha$). So at least $d+1$ vertices in $L_v$ are red, implying $v$ is blue. Thus every vertex in $T_0$ is blue, which is a contradiction since $T_0$ has a vertex of degree $d+1$. Hence $G$ is not 2-colourable with defect $d$. 
\end{proof}

\begin{figure}[!ht]
\centering
\includegraphics{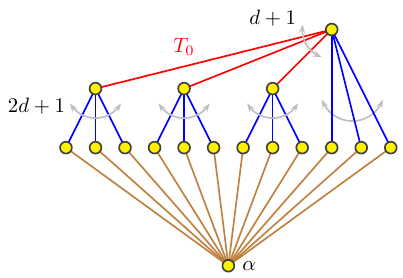}
\caption{Construction in \cref{FVN1TriangleFreeNotDefectLowerBound}.}
\label{FVN1}
\end{figure}

\begin{prop}
\label{FeedbackLowerBound}
For any $c\geq 1$ there is a graph $G$ with girth $6$ and $\fvn(G)\leq 7$, and $\tw(G)\leq \pw(G)\leq 8$, such that $G$ has no 2-colouring with clustering $c$.
\end{prop}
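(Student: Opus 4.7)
My plan is to build $G$ from an apex set $A=\{a_1,\dots,a_7\}$ of seven vertices together with $G-A$ a disjoint union of paths (a linear forest). This structural choice automatically yields $\fvn(G)\le|A|=7$ and $\pw(G)\le \pw(G-A)+|A|\le 1+7=8$, so only girth $\ge 6$ and the failure of 2-colouring with clustering $c$ remain to be verified.

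To defeat every 2-colouring with clustering at most $c$, the main tool is a pigeonhole-on-apex argument. Taking $N$ sufficiently large in terms of $c$ and $|A|$ (specifically $N>c\cdot 2^{|A|}=128c$), I glue $N$ copies of a small forcing gadget $H$ onto the common apex set $A$. Because there are only $2^7=128$ possible 2-colourings of $A$, at least $N/128$ of the copies must witness the same apex colouring $\chi_A$. The gadget $H$ is to be designed so that, conditional on $\chi_A$ and on clustering at most $c$, a distinguished ``output'' vertex of $H$ is forced to take a colour prescribed by $\chi_A$, since otherwise a monochromatic component inside $H$ already exceeds size $c$. Arranging the output vertices of consecutive copies to lie consecutively on a single path of the linear forest $G-A$ then produces a monochromatic path of length at least $N/128>c$, contradicting the clustering assumption.

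The main technical obstacle is designing $H$ so that three constraints are simultaneously satisfied: (a) gluing many copies of $H$ to a shared $A$ creates no cycle of length less than $6$, forcing neighbours of a single apex vertex in $G-A$ to lie at path-distance at least $4$ and forbidding certain short alternating paths between pairs of apex vertices; (b) the interior of each $H$ lies within a single path of $G-A$, preserving the linear-forest structure needed for the parameter bounds; and (c) the apex--interior edges are rich enough to propagate the colour constraint to the output vertex. I plan to realise each copy of $H$ as a length-$4$ segment of a path of $G-A$ attached at its two endpoints to a carefully chosen pair of apex vertices, varying this pair among the $\binom{7}{2}=21$ possibilities in a cyclic schedule so that no pair reappears until a safe spacing gap has passed.

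The final verification of girth $\ge 6$ is then a direct case analysis: since $G-A$ is a forest, every cycle must use at least one apex vertex; if it uses exactly one apex vertex $a_i$ it closes a segment of $G-A$ between two of its neighbours, which by (a) has length at least $4$, giving cycle length at least $6$; if it uses two or more apex vertices, the chosen cyclic schedule guarantees the alternating path between the relevant apex-attachment points has length at least $4$, again giving cycle length at least $6$. Together with the parameter bounds from the first paragraph and the pigeonhole contradiction from the second, this completes the proof.
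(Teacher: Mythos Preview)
Your overall framework---a $7$-vertex apex set $A$ with $G-A$ a disjoint union of paths---matches the paper, and your derivation of $\fvn(G)\le 7$ and $\pw(G)\le 8$ is fine. The problem is the colouring argument: the gadget you describe cannot do the forcing you claim.

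You say each copy of $H$ is a length-$4$ path segment with its two endpoints attached to a pair of apex vertices, and that ``conditional on $\chi_A$ and on clustering at most $c$, a distinguished output vertex of $H$ is forced to take a colour prescribed by $\chi_A$, since otherwise a monochromatic component inside $H$ already exceeds size $c$.'' But $H$ has only five vertices, independent of $c$; for $c\ge 5$ no colouring of $H$ can produce a monochromatic component of size exceeding $c$ inside $H$, so nothing is forced. More broadly, a path segment with only two apex attachments admits a proper $2$-colouring of its interior regardless of how the apex vertices are coloured, so the output vertex is never constrained. Your pigeonhole step is also confused: since all gadgets share the single set $A$, they all witness the \emph{same} colouring $\chi_A$; there is no distribution over $2^7$ possibilities to pigeonhole on.

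The paper's construction fixes this by making \emph{every} vertex of each path adjacent to the apex set, not just two per gadget. Specifically, for every $4$-subset $S\in\binom{A}{4}$ it takes a long path $P_S$ and attaches each vertex of $P_S$ to one vertex of $S$ (cycling through $S$ with period $4$, which also handles the girth constraint). The pigeonhole is then applied to $A$ itself: any $2$-colouring of seven vertices contains a monochromatic $4$-subset $S$, say white. Now each white vertex of $P_S$ lies in the white cluster of some $v\in S$, so at most $4(c-1)$ vertices of $P_S$ are white; the remaining black vertices of $P_S$ split into at most $4c$ intervals, one of which must be long. The essential idea you are missing is that the forcing comes from the apex cluster-size bound summed over a monochromatic subset of $A$, which requires dense attachment along the whole path, not two edges per constant-size gadget.
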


\begin{proof}
As illustrated in \cref{FVN7}, let $V$ be a set of 7 vertices. 
Let $n:=4c^2+4c+1$. 
For each set $S\in\binom{V}{4}$, 
add a path $P_S$ of length $n$ to $G$ (disjoint from $V$), where $P_S$ and $P_T$ are disjoint for distinct $S,T\in\binom{V}{4}$. 
Consider $S\in \binom{V}{4}$. 
For each vertex $x$ in $P_S$ add an edge to $G$ from $x$ to one vertex $v\in S$, so that for any distinct vertices $x,y\in V(P_S)$, if $xv,yv\in E(G)$ then $\dist_{P_S}(x,y)\geq 4$. This is possible since $|S|=4$. By construction, $\fvn(G)\leq 7$. In fact, $G-V$ is a forest of paths. Thus $\tw(G)\leq \pw(G)\leq 8$. 

\begin{figure}[!ht]
\centering
\includegraphics{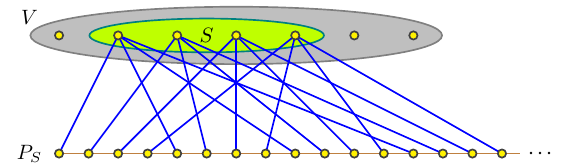}
\caption{Construction in \cref{FeedbackLowerBound}.}
\label{FVN7}
\end{figure}

Suppose for the sake of contradiction that $G$ has a cycle $C$ of length at most 5. Since $G-V$ is acyclic, $C\cap V\neq\emptyset$. Since $V$ is an independent set in $G$, $|C\cap V|\leq 2$. First suppose that $C\cap V=\{v\}$. Say $C=(v,a,b,c,d)$. Thus $(a,b,c,d)$ is a subpath of some $P_S$, and $\dist_{P_S}(a,d)\leq 3$, which contradicts the construction. Hence $|C\cap V|= 2$, say $C\cap V=\{v,w\}$. Since $V$ is an independent set, $v$ and $w$ are not consecutive in $C$. Thus $v$ and $w$ have a common neighbour in $C$, which must be in some $P_S$, but no vertex in $P_S$ is adjacent to two vertices in $V$, which is the desired contradiction. Hence $G$ has girth at least 6. In fact, $G$ has girth 6.

Suppose that $G$ has a 2-colouring with clustering $c$. There is a monochromatic subset $S$ of $V$ of size 4. Say $S$ is white. 
For each $v\in S$ there are less than $c$ white vertices in $P_S$ adjacent to $v$. Each vertex in $P_S$ is adjacent to some vertex in $S$. Thus there are less than $4c$ white vertices in $P_S$. The black subgraph of $P_S$ has at most $4c$ components. 
Hence $P_S$ contains a black subpath on $(n-4c)/4c>c$ vertices. This contradiction shows that $G$ has no 2-colouring with clustering $c$. 
\end{proof}

Note the following concerning the graph $G$ in \cref{FeedbackLowerBound}. Since $G-V$ is a forest of paths, it follows that for any $v,w\in V$, $G-(V\setminus\{v\})$ is outerplanar and so $G-(V\setminus\{v,w\})$ is planar. So $G$ is $5$-apex, which shows that $(\gamma,k)$-apex graphs with girth at most 6 have clustered chromatic number at least 3 (for $\gamma \geq 0$ and $k\geq 5$).

%%%%%%%%%%%%%%%%%%%%%%%%%%%%%%%%%%%
\section{Upper Bounds}

%%%%%%%%%%%%%%%%%%%%%%%%%%%%%%%%%%%
\subsection{Defective Colouring via Strong Colouring Numbers}
\label{StrongColouringNumbers}

\citet{KY03} introduced the following definition. For a graph $G$, a total order $\preceq$ of $V(G)$, and  $r\in\NN$, a vertex $w\in V(G)$ is \defn{$r$-reachable} from a vertex $v\in V(G)$ if there is a path $v=w_0,w_1,\dots,w_{r'}=w$ of length $r'\in[0,r]$ such that $w\preceq v$ and $v\prec w_i$ for all $i\in[r'-1]$. For a graph $G$ and integer $r\in\NN$, the \defn{$r$-strong colouring number} $\col_r(G)$ is the minimum integer such that there is a total order~$\preceq$ of $V(G)$ such that at most $\col_r(G)$ vertices are $r$-reachable from each vertex of $G$. 

%Similarly, let $\wreach_r(G,\preceq,v)$ be the set of vertices $x\in V(G)$ for which there is a path $v=w_0,w_1,\dots,w_{r'}=x$ of length $r'\in[0,r]$ such that $x\preceq v$ and $x\prec w_i$ for all $i\in[r'-1]$. Similarly, the \defn{$r$-weak colouring number} $\wcol_r(G)$ is the minimum integer $k$ such that there is a linear ordering $\preceq$ of~$V(G)$ with $|\wreach_r(G,\preceq,v)|\leq k$ for each vertex $v$ of $G$.

An attractive aspect of strong colouring numbers is that they interpolate between degeneracy and treewidth \citep{KPRS16}. Indeed, it follows from the definition that $\col_1(G)$ equals the degeneracy of $G$ plus 1, implying $\chi(G)\leq \col_1(G)$. At the other extreme, \citet{GKRSS18} showed that $\col_r(G)\leq \tw(G)+1$ for all $r\in\NN$, and indeed $$\lim_{r\to\infty}\col_r(G)=\tw(G)+1.$$

Strong colouring numbers provide upper bounds on several graph parameters of interest, including 
acyclic chromatic number \citep{KY03}, 
game chromatic number \citep{KT94,KY03}, Ramsey numbers \citep{CS93}, oriented chromatic number \citep{KSZ-JGT97}, arrangeability~\citep{CS93}, etc. 
Strong colouring numbers are important also because they characterise bounded expansion classes \citep{Zhu09}, they characterise nowhere dense classes \citep{GKRSS18}, and have several algorithmic applications such as the constant-factor approximation algorithm for domination number by \citet{Dvorak13}, and the almost linear-time model-checking algorithm of \citet{GKS17}. 

Van~den~Heuvel~et~al.~\citep{HOQRS17} proved the following linear bounds on $\col_r(G)$ (always for every $r\in\NN$): Every finite planar graph $G$ satisfies $\col_r(G) \leq 5r+1$. More generally,  every finite graph $G$ with Euler genus $\gamma$ satisfies $\col_r(G) \leq (4\gamma+5)r + 2\gamma+1$. Even more generally, for $t\geq 4$, every $K_t$-minor-free graph $G$ satisfies $\col_r(G) \leq \binom{t-1}{2} (2r+1)$. 

\begin{thm}
\label{DefectiveColNumber}
Every $K_{s,t}$-subgraph-free graph is $s$-colourable with defect at most 
$\col_2(G)+(t-1)\binom{\col_2(G)}{s-1}$.
\end{thm}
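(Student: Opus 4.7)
The plan is to colour greedily along the order witnessing $\col_2(G)\le k$, where $k:=\col_2(G)$. Fix a total order $\preceq$ of $V(G)$ such that $|R_2(v)|\le k$ for every $v$, and write $N^-(v)=\{w\in N(v):w\prec v\}$ and $N^+(v)=\{w\in N(v):w\succ v\}$. Every back-neighbour of $v$ is $1$-reachable, hence $2$-reachable, from $v$, so $N^-(v)\subseteq R_2(v)\setminus\{v\}$ and in particular $|N^-(v)|\le k-1$. I process vertices in $\preceq$-increasing order, assigning each $v$ a colour $c(v)\in\{1,\dots,s\}$ that minimises $|\{w\in N^-(v):c(w)=c\}|$. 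By averaging, at most $\lfloor(k-1)/s\rfloor$ back-neighbours of $v$ share its colour, so the backward contribution to the defect of $v$ is at most $k-1$.

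The main content is bounding the forward same-colour count $d^+(v):=|\{u\in N^+(v):c(u)=c(v)\}|$ using the $K_{s,t}$-subgraph-free hypothesis. Fix such a $u$. Since $v\in N^-(u)$ has colour $c(v)=c(u)$, the minimising rule applied at $u$ forces colour $c(v)$ to be one of minimum multiplicity in $N^-(u)$, so every colour in $\{1,\dots,s\}$ appears at least once in $N^-(u)$. For each colour $c'\neq c(v)$ pick a representative $w^u_{c'}\in N^-(u)\setminus\{v\}$ of colour $c'$, and set $S_u:=\{w^u_{c'}:c'\ne c(v)\}$. Then $\{v\}\cup S_u$ is an $s$-subset of $N(u)$, so $u$ is a common neighbour of this $s$-subset; the $K_{s,t}$-freeness of $G$ therefore limits the number of such $u$ (for a fixed $S_u$) to $t-1$.

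To conclude $d^+(v)\le(t-1)\binom{k}{s-1}$, I want to show that the representatives $w^u_{c'}$ can always be picked inside $R_2(v)\setminus\{v\}$, so that each $S_u$ is one of at most $\binom{k-1}{s-1}\le\binom{k}{s-1}$ possible $(s-1)$-subsets of the at-most-$(k-1)$-element set $R_2(v)\setminus\{v\}$. Combining then gives total defect at most $(k-1)+(t-1)\binom{k}{s-1}\le k+(t-1)\binom{k}{s-1}$.

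The main obstacle is precisely this in-$R_2(v)$ selection: the greedy rule only forces each colour $c'\ne c(v)$ to occur somewhere in $N^-(u)$, not necessarily in $R_2(v)\cap N^-(u)$. A colour-$c'$ neighbour of $u$ may lie strictly between $v$ and $u$ in $\preceq$, hence outside $R_2(v)$. I expect this to be resolved by either refining the choice rule (e.g.\ minimising over $R_2(u)\setminus\{u\}$ and using $2$-reachability to transfer the representative to a neighbour of $u$ inside $R_2(v)$), or by a careful tie-breaking/charging that routes the `misplaced' representatives without inflating the effective universe of $S_u$'s beyond size $k$. Once that step is in place, the rest of the bookkeeping is straightforward.
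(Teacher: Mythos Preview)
Your overall strategy matches the paper's, and you have correctly identified the genuine obstacle: with your minimising rule, a forward same-colour neighbour $u$ of $v$ is only guaranteed to have a representative of each colour somewhere in $N^-(u)$, possibly strictly between $v$ and $u$, and such a representative need not be $2$-reachable from $v$. This is not a cosmetic difficulty that tie-breaking or charging will absorb; a concrete instance with $s=2$ shows that the only colour-$c'$ neighbour of $u$ in $N^-(u)$ can sit between $v$ and $u$, so the universe for $S_u$ genuinely escapes $R_2(v)$.

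The paper closes this gap not by patching the analysis but by changing the colouring rule. Instead of minimising over all of $N^-(v)$, assign $c(v)$ to be any colour \emph{different from the colours of the $s-1$ leftmost neighbours of $v$}. Now suppose $u\succ v$ with $c(u)=c(v)$. Since $v\in N^-(u)$ carries colour $c(u)$, the rule at $u$ says $v$ is not among the $s-1$ leftmost neighbours of $u$; hence those $s-1$ leftmost neighbours of $u$ all lie strictly to the left of $v$. Taking $A_u$ to be this set gives $A_u\subseteq R_2(v)$ automatically (via the path $v,u,a$ with $a\prec v\prec u$), and $|A_u|=s-1$ with $\{v\}\cup A_u\subseteq N(u)$. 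From here your counting goes through verbatim: at most $(t-1)$ forward neighbours $u$ can share a given $A_u$ by $K_{s,t}$-freeness, and there are at most $\binom{\col_2(G)}{s-1}$ choices for $A_u$, while the backward contribution is at most $\col_2(G)$. So the fix you anticipated (``refining the choice rule'') is exactly right, and the refinement is this leftmost-$(s-1)$ rule.
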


\begin{proof}
Let $\preceq$ be a total ordering of $V(G)$ witnessing $\col_2(G)$. Consider each $v\in V(G)$ in the order given by $\preceq$, and choose $\col(v)\in \{1,\dots,s\}$ distinct from the colour assigned to the $s-1$ leftmost neighbours of $v$.
Suppose that $v$ has monochromatic degree at least  $\col_2(G)+(t-1)\binom{\col_2(G)}{s-1}+1$. At most $\col_2(G)$ neighbours of $v$ are to the left of $v$. Thus there is a set $W$ of $(t-1)\binom{\col_2(G)}{s-1}+1$ neighbours $w$ of $v$ with $\col(w)=\col(v)$ and $v\prec w$ for each $w\in W$. For each $w\in W$, by the choice of $\col(w)$, there is a set $A_w$ of $s-1$ neighbours of $w$ to the left of $v$. Thus $A:=\bigcup_{w\in W}A_w$ is 2-reachable from $v$. Hence $|A|\leq\col_2(G)$. For each $(s-1)$-subset $A'$ of $A$ there are at most $t-1$ vertices $w\in W$ with $A_w=A'$, as otherwise with $v$ there would be a $K_{s,t}$-subgraph in $G$. Hence $|W|\leq (t-1)\binom{\col_2(G)}{s-1}$, which is the desired contradiction. Thus each vertex $v$ has monochromatic degree at most $\col_2(G)+(t-1)\binom{\col_2(G)}{s-1}$. Hence $G$ is $s$-colourable with defect at most
$\col_2(G)+(t-1)\binom{\col_2(G)}{s-1}$.
\end{proof}

\citet{Zhu09} proved that $\nabla(G)$ and $\col_2(G)$ are tied. So \cref{DefectiveColNumber} and \cref{OOW} are equivalent (up to the defect term). Note that   \cref{OOW,DefectiveColNumber} both hold in the choosability setting.

We now give an application of \cref{DefectiveColNumber} for graphs of given circumference. We need the following tweak to \cref{DefectiveColNumber}. In a coloured graph $G$ a vertex $v$ is \defn{properly coloured} if $\col(v)\neq\col(w)$ for each edge $vw\in E(G)$; that is, $v$ is the only vertex in the monochromatic component containing $v$.

\begin{lem}
\label{DefectiveColNumberTweaked}
For any integer $t\geq s\geq 2$, for any $K_{s,t}$-subgraph-free graph $G$ and vertex $v\in V(G)$, there is an $s$-colouring of $G$ with defect at most 
$\col_2(G)+1+(t-1)\binom{\col_2(G)+1}{s-1}$, such that $v$ is properly coloured. 
\end{lem}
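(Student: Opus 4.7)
The approach is to adapt the proof of \cref{DefectiveColNumber} so that $v$ is coloured first and each of its neighbours is then forced to avoid $v$'s colour. Start from a total order $\preceq$ of $V(G)$ witnessing $\col_2(G)$, and let $\preceq'$ be the order obtained from $\preceq$ by moving $v$ to the front. A short case analysis shows that each vertex $u$ has at most $\col_2(G)+1$ vertices $2$-reachable in $\preceq'$: any length-$2$ path $u \to x \to w$ with interior vertex $x=v$ is disallowed in $\preceq'$ because $v$ is now $\preceq'$-minimum, while $v$ itself may become newly $2$-reachable from $u$ as an endpoint; hence the $2$-reachable set in $\preceq'$ is contained in the $2$-reachable set in $\preceq$ together with possibly $\{v\}$.

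Next, colour the vertices greedily in $\preceq'$-order. First assign colour $1$ to $v$. For each subsequent vertex $w$, define the forbidden set $S(w)$ as follows: if $v \in N_G(w)$, let $S(w) := \{v\}$ together with the $s-2$ $\preceq'$-leftmost already-coloured neighbours of $w$ other than $v$; if $v \notin N_G(w)$, let $S(w)$ be the $s-1$ $\preceq'$-leftmost already-coloured neighbours of $w$. In either case $|S(w)| \leq s-1$, so some colour in $\{1,\dots,s\}$ is not used on $S(w)$; assign any such colour to $w$. By construction, $\col(v) \neq \col(w)$ for every $w \in N_G(v)$, so $v$ is properly coloured.

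For the defect bound, mimic the argument of \cref{DefectiveColNumber} with the ordering $\preceq'$ in place of $\preceq$. Assume some $u \neq v$ has monochromatic degree exceeding $\col_2(G)+1+(t-1)\binom{\col_2(G)+1}{s-1}$. Since $u$ has at most $\col_2(G)+1$ $\preceq'$-left neighbours, it has more than $(t-1)\binom{\col_2(G)+1}{s-1}$ monochromatic $\preceq'$-right neighbours $w$. For each such $w$, the condition $\col(w)=\col(u)$ means $u \notin S(w)$, and the crucial point is that even in the case $v \in N_G(w)$, the vertex $v$ is $\preceq'$-left of $u$ and occupies one slot of $S(w)$, so at least $s-1$ neighbours of $w$ lie strictly $\preceq'$-left of $u$. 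Pick $A_w \subseteq N_G(w)$ of size $s-1$ among such left neighbours; the union $A := \bigcup_w A_w$ is $2$-reachable from $u$ in $\preceq'$, so $|A| \leq \col_2(G)+1$, and the $K_{s,t}$-subgraph-free hypothesis forces $|\{w : A_w = A'\}| \leq t-1$ for each $(s-1)$-subset $A'$ of $A$ (together with $u$ this would otherwise give $K_{s,t}$), contradicting the lower bound on the number of such $w$. The main technical obstacle is verifying that the case $v \in N_G(w)$ still yields $s-1$ (not $s-2$) neighbours of $w$ strictly $\preceq'$-left of $u$, which is precisely where placing $v$ at the very front of $\preceq'$ and paying the extra $+1$ in both factors of the bound pays off.
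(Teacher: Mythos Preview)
Your proposal is correct and follows essentially the same approach as the paper: move $v$ to the front of the witnessing order, observe that this increases each vertex's $2$-reachable set by at most $\{v\}$, and run the greedy procedure of \cref{DefectiveColNumber}. The only cosmetic difference is your case split on whether $v\in N_G(w)$ when defining $S(w)$; since $v$ is the $\preceq'$-minimum, it is automatically the leftmost neighbour of any $w\in N_G(v)$, so your rule coincides with simply taking the $s-1$ $\preceq'$-leftmost neighbours, and the separate treatment (and the ``crucial point'' paragraph) is unnecessary.
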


%\marcin{If the length of lists increases by at least one, doesn't this obviously follow by first colouring v arbitrarily, deleting the assigned colour from every list of other vertices and applying cref{DefectiveColNumber}?} \david{We do not want to increase the length of the lists. In both results the list size is $s$. We allow larger defect in the second result. }

\begin{proof}
    Let $\preceq$ be the total order of $V(G)$ witnessing $\col_2(G)$. Let $\preceq'$ be the total order of $V(G)$ obtained from $\preceq$ by putting $v$ first. For each vertex $x\in V(G)$, the only vertex that is $2$-reachable from $x$ in $\preceq'$ but not in $\preceq$ is $v$. Thus at most $\col_2(G)+1$ vertices are 2-reachable from $x$ in $\preceq'$. Apply the colouring procedure from the proof of \cref{DefectiveColNumber} to $\preceq'$. So $v$ will be the first vertex coloured. If $vx\in E(G)$ then $v$ will be one of the $s-1$ leftmost neighbours of $x$, so $x$ will be assigned a colour distinct from the colour of $v$. That is, $v$ is properly coloured. The proof for the bound on the defect is analogous to the proof of \cref{DefectiveColNumber}.    
\end{proof}

\begin{thm}
\label{CircumferenceGirth5}
For every graph $G$ with circumference $k$ and girth at least 5, for every vertex $v$ of $G$, there is a 2-colouring of $G$ with clustering $(4k^2)^{k^2}$ such that $v$ is properly coloured. 
\end{thm}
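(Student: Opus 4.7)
My plan is to invoke \cref{DefectiveColNumberTweaked} with $s = t = 2$ to obtain a $2$-colouring of $G$ with bounded defect, and then to promote this to a clustering bound by analysing the structure of each monochromatic component. The girth hypothesis ensures that $G$ is $K_{2,2}$-subgraph-free, so \cref{DefectiveColNumberTweaked} applies with $s = t = 2$. Since $\cir(G) \leq k$, \citet{Birmele03} gives $\tw(G) \leq k - 1$, and so $\col_2(G) \leq \tw(G) + 1 \leq k$ by the bound of van~den~Heuvel~et~al.\ recalled in \cref{StrongColouringNumbers}. Applying \cref{DefectiveColNumberTweaked} yields a $2$-colouring $\phi$ of $G$ with defect at most $\col_2(G) + 1 + \binom{\col_2(G) + 1}{1} \leq 2k + 2$, in which $v$ is properly coloured.

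Each monochromatic component $H$ of $\phi$ inherits girth at least $5$ and circumference at most $k$ from $G$, and has the additional bound $\Delta(H) \leq 2k + 2$ coming from the defect. The remaining work is to show $|V(H)| \leq (4k^2)^{k^2}$. For this I would use a BFS argument within $H$: since $H$ has girth at least $5$, every vertex in BFS layer $L_i$ has a unique neighbour in $L_{i-1}$ (a second would close a $4$-cycle), so $|L_i| \leq (\Delta(H) - 1)\,|L_{i-1}| \leq (2k + 1)^i$. Hence if the BFS exploration terminates by depth $r$, then $|V(H)| \leq (2k + 2)^{r+1}$, and taking $r$ of order $k^2$ comfortably absorbs the target bound $(4k^2)^{k^2}$.

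The hard part is bounding the BFS depth of $H$, since girth and circumference alone do not preclude long induced paths (a long induced path has arbitrary diameter but no cycles). My plan is to exploit the block-cut decomposition of $H$: every $2$-connected block $B$ of $H$ has $\cir(B) \leq k$ and girth at least $5$, and since any two vertices of a $2$-connected graph lie on a common cycle, the diameter of $B$ is at most $\floor{k/2}$. Thus each block contributes at most $O(k)$ to the BFS depth of $H$. It then remains to bound the number of blocks traversed along any BFS path in $H$. I expect this to follow by pigeonholing against the $\col_2$-witness ordering used inside the proof of \cref{DefectiveColNumberTweaked}: along a long chain of blocks the distinguishing cut-vertices should furnish many $2$-reachable pairs which, combined with $\cir(G) \leq k$, contradict the defect bound on $v$'s monochromatic component. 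If this can be made to yield $O(k)$ blocks along any BFS path, the overall BFS depth of $H$ is $O(k^2)$, and the clustering bound $(4k^2)^{k^2}$ follows. The ``proper colouring of $v$'' conclusion is preserved throughout, since it is already guaranteed by \cref{DefectiveColNumberTweaked} in the first step and never disturbed by the analysis of monochromatic components.
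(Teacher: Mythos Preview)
Your setup (apply \cref{DefectiveColNumberTweaked} with $s=t=2$, use girth $\geq 5$ to get $K_{2,2}$-freeness, and bound $\col_2(G)\leq\tw(G)+1\leq\cir(G)\leq k$) matches the paper exactly. The divergence is in how the defect bound is upgraded to a clustering bound, and there your argument has a genuine gap.

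The paper does \emph{not} analyse the colouring on all of $G$ at once. Instead it inducts on $|V(G)|$: if $G$ is not 2-connected, take a separation $(G_1,G_2)$ at a cut vertex $w$ with $v\in V(G_1)$, colour $G_1$ by induction with $v$ properly coloured, colour $G_2$ by induction with $w$ properly coloured, permute colours in $G_2$ so that $w$ matches, and glue. Since $w$ is properly coloured in $G_2$, no monochromatic component crosses $w$, so the clustering bound is preserved. Only when $G$ is 2-connected is \cref{DefectiveColNumberTweaked} invoked, and then the key external fact is Dirac's theorem: in a 2-connected graph with circumference $k$, every path has length less than $k^2$. Combining this with the defect bound $\Delta(H)\leq 2k+2$ on a monochromatic component $H$, every path in $H$ has fewer than $k^2$ edges, hence $|V(H)|<(2k+2)^{k^2}\leq(4k^2)^{k^2}$.

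Your route instead tries to bound the block-cut tree of $H$ directly and ends with ``I expect this to follow by pigeonholing against the $\col_2$-witness ordering''. This is not a proof, and it is not clear that it can be made into one: without first reducing to a 2-connected host graph, $G$ can have arbitrarily long paths (take a chain of 5-cycles glued at cut vertices: circumference~5, girth~5, unbounded diameter), and the defect bound $2k+2$ does not by itself prevent a monochromatic component from being a long path threading many blocks. You give no indication of what contradiction the pigeonhole would produce. The missing idea is precisely the reduction to 2-connectedness, after which Dirac's bound on path length does the work your block-count argument was trying to do.

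A minor point: your BFS claim that girth $\geq 5$ forces each vertex in $L_i$ to have a unique neighbour in $L_{i-1}$ is false (in $C_6$ the antipodal vertex has two BFS parents). The harmless bound $|L_i|\leq \Delta(H)\,|L_{i-1}|$ is all you need there and is immediate.
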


\begin{proof}
%, which implies that $\mad(G)\leq 2k$ and $\nabla(G)\leq 2k$. 
%
We proceed by induction on $|V(G)|$. We may assume that $G$ is connected. If $G$ is not 2-connected, then $G$ has a separation $(G_1,G_2)$ with $V(G_1\cap G_2)=\{w\}$ for some vertex $w$. Without loss of generality, $v\in V(G_1)$. By induction, there is a 2-colouring of $G_1$ with clustering $c$ such that $v$ is properly coloured. By induction, there is a 2-colouring of $G_2$ with clustering $c$ such that $w$ is properly coloured. Permute the colours in $G_2$ so that $w$ is assigned the same colour in $G_1$ and $G_2$. We obtain a 2-colouring of $G$ with clustering $c$ such that $v$ is properly coloured.  

Now assume that $G$ is 2-connected. So $G$ has finite girth.
As mentioned above, $\col_2(G)\leq \tw(G)+1\leq\cir(G)\leq k$.
By \cref{DefectiveColNumberTweaked} with $s=t=2$, $G$ is 2-colourable with defect 
$\col_2(G)+1+(t-1)\binom{\col_2(G)+1}{s-1} \leq
2k+2$ such that  $v$ is properly coloured. \citet{Dirac52a} showed that every path in $G$ has length less than $k^2$ (using 2-connectivity). (In fact, \citet{BJMMSS} showed that $G$ has treedepth at most $k$.)\ Hence each monochromatic component has less than $(4k^2)^{k^2}$ vertices. 
\end{proof}

%%%%%%%%%%%%%%%%%%%%%%%%%%%%%%%%%%%
\subsection{Feedback Vertex Number}
\label{FVN}

We now prove upper bounds on the clustered chromatic number of graphs with given feedback vertex number and given girth.

\begin{prop}
Every graph $G$ with $\fvn(G)\leq 1$ and girth at least 5 is 2-colourable with clustering $2$.
\end{prop}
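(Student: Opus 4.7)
Let $G$ satisfy $\fvn(G)\leq 1$ and girth at least $5$. If $G$ is a forest then a proper $2$-colouring gives clustering $1$, so I assume $G$ has a cycle and fix $v\in V(G)$ such that $F:=G-v$ is a forest. The girth condition forces $N_G(v)$ to be independent in $F$ and pairwise at $F$-distance at least $3$ (otherwise $G$ contains a cycle of length at most $4$ through $v$), and in particular no vertex of $F$ has two neighbours in $N_G(v)$. My plan is to assign $v$ colour $1$ and every vertex of $N_G(v)$ colour $2$; then $v$'s monochromatic component is $\{v\}$, and no monochromatic component of $G$ crosses between $v$ and $F$ or between distinct tree components of $F$. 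So it suffices to prove the following tree lemma and apply it independently to each tree component $T$ of $F$ with $S:=N_G(v)\cap V(T)$: for any tree $T$ and any independent set $S\subseteq V(T)$ pairwise at $T$-distance at least $3$, there is a $2$-colouring of $T$ with clustering $2$ in which every vertex of $S$ is coloured $2$.

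Since clustering at most $2$ is equivalent to every vertex having monochromatic degree at most $1$, I would prove the tree lemma by a top-down greedy algorithm. Root $T$ at some vertex $s_0\in S$ (or arbitrarily if $S=\emptyset$) and process the vertices in BFS order. When visiting $u$: assign colour $2$ if $u\in S$; otherwise assign colour $1$ by default, flipping to colour $2$ exactly when colour $1$ would raise the parent's monochromatic degree to $2$.

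The main obstacle is showing this greedy step never gets stuck. When $u\notin S$ the two potential violations (colour $1$ overloading the parent's colour-$1$ side, colour $2$ overloading its colour-$2$ side) require the parent to have different colours, so at least one of the two choices is always safe. The delicate case is $u\in S$, where the forced colour $2$ might in principle overload the parent $p$'s colour-$2$ component. The distance-$\geq 3$ property of $S$ rules this out: it gives $p\notin S$, allows at most one $S$-child of $p$ (necessarily $u$ itself), and the greedy rule only ever assigns colour $2$ to a non-$S$ vertex when that vertex's own parent has colour $1$. A short induction along the BFS order then shows that when $u$ is processed $p$ has no colour-$2$ neighbour among the already-coloured vertices, so assigning $u$ colour $2$ raises $p$'s colour-$2$ degree to exactly $1$---within the allowed bound.
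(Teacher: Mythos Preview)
Your proof is correct and takes a genuinely different route from the paper's. The paper argues by induction on $|V(G)|$: if $G-v$ has a leaf not adjacent to $v$, delete it and extend; otherwise a deepest leaf $\ell$ of $G-v$ is adjacent to $v$, the girth hypothesis forces its parent $p$ to have $G$-degree at most $2$, so one deletes $\{\ell,p\}$, colours the rest by induction, and extends by giving $\ell$ the colour opposite $v$ and $p$ the colour opposite its remaining neighbour. You instead fix the apex $v$ once, pre-colour $v$ with colour $1$ and $N_G(v)$ with colour $2$, and reduce to a standalone tree lemma handled by a top-down greedy; the girth-$5$ hypothesis is invoked exactly once, to obtain the pairwise distance-$\geq 3$ condition on $S=N_G(v)$ in $F$. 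Your analysis of the delicate case is sound: when $u\in S$ is processed, its parent $p\notin S$ can only have colour $2$ via a flip (so $p$'s own parent has colour $1$), and no earlier child of $p$ has colour $2$ since non-$S$ children of a colour-$2$ parent never trigger the flip rule and $u$ is $p$'s unique $S$-child by the distance condition. Your method yields a direct constructive algorithm and isolates a reusable statement about $2$-colouring trees with a sparse pre-coloured set; the paper's induction is shorter and deploys the girth condition locally at each peeled vertex.
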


\begin{proof}
Let $v$ be a vertex of $G$ such that $G-v$ is a forest. Suppose that some leaf $\ell$ of $G-v$ is not adjacent to $v$. So $\deg_G(\ell)\leq 1$. By induction, $G-\ell$ is 2-colourable with clustering $2$. Assign $\ell$ a colour different from its neighbour. Now $G$ is 2-coloured with clustering $2$. Now assume that every leaf of $G-v$ is adjacent to $v$. Fix a root vertex $r$ of $G-v$. Let $\ell$ be a vertex in the same component of $G-v$ as $r$ at maximum distance from $r$. So $\ell$ is a leaf in $G-v$. Let $p$ be the parent of $\ell$ in $G-v$. So $pv\not\in E(G)$ as otherwise $(p,v,\ell)$ would be a 3-cycle in $G$. If $p$ has another child $\ell'$ (in addition to $\ell$), then $\ell'$ is a leaf by the choice of $\ell$, implying that $(\ell,p,\ell',v)$ is a 4-cycle in $G$. Hence $\ell$ is the only child of $p$, and $p$ has degree at most 2 in $G$. By induction, $G-\ell-p$ is  2-colourable with clustering 2. Assign $\ell$ a colour distinct from the colour assigned to $v$, and assign $p$ a colour distinct from the colour of the neighbour of $p$ other than $\ell$. Every monochromatic component created is contained in $\{\ell,p\}$. Thus the clustering is at most 2. 
\end{proof}

\begin{thm}
\label{FVNgirth7}
Every graph $G$ with $\fvn(G)\leq k$ and girth at least 7 is 2-colourable with clustering $\binom{k}{2}+k+1$.
\end{thm}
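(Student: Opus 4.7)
My plan is to fix a feedback vertex set $A\subseteq V(G)$ with $|A|\leq k$, set $F := G-A$ (a forest), and design a 2-colouring of $V(G)$ in which every monochromatic component $C$ satisfies $|C\cap A|\leq k$ (automatic) and $|C\cap V(F)|\leq \binom{k}{2}$, yielding the bound $|C|\leq \binom{k}{2}+k$.

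The key structural observation comes from the girth hypothesis: since $G$ has no $4$-cycle, any two distinct $a,b\in A$ share at most one common neighbour in $V(F)$, for otherwise $a,v,b,v'$ would form a $4$-cycle. Hence the set $B := \{v\in V(F) : |N_G(v)\cap A|\geq 2\}$ of ``bridge'' vertices has size at most $\binom{k}{2}$; the remaining vertices of $V(F)$ are either \emph{pendant} to a unique $A$-vertex or \emph{free} (having no $A$-neighbour).

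My strategy is to arrange that every pendant $v$ with $N_G(v)\cap A=\{a\}$ receives $c(v)\neq c(a)$, thus separating $v$ from $a$ in the monochromatic subgraph. I build the colouring in three steps. First, define an auxiliary graph $H$ on $A$ with $ab\in E(H)$ iff $a,b$ share a common neighbour in $V(F)$, and colour $A$ so that each connected component of $H$ is monochromatic (always possible with two colours, by assigning each component a single colour). Second, for every $v\in V(F)$ with a nonempty $A$-neighbourhood, observe that $N_G(v)\cap A$ lies inside a single $H$-component (any two of its elements are joined by an $H$-edge witnessed by $v$ itself) and is therefore monochromatic in $c|_A$; set $c(v)$ to the opposite colour. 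Third, extend to the free vertices of $V(F)$ by completing a proper $2$-colouring of $F$ wherever possible. By construction, no edge between $A$ and $V(F)$ is monochromatic, so each monochromatic component of $G$ lies entirely inside $A$ (size $\leq k$) or entirely inside $V(F)$, and any component meeting $A$ picks up no $V(F)$-vertices at all.

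The main obstacle I expect is bounding the monochromatic components lying inside $V(F)$. These arise when two adjacent $u,v\in V(F)$ are both constrained in step two and their $A$-neighbourhoods happen to lie in $H$-components assigned the same colour, producing an unavoidable monochromatic edge in $F$. Here the girth $\geq 7$ hypothesis is decisive: two $V(F)$-vertices adjacent to the same $a\in A$ lie at distance at least $5$ in $F$ (otherwise a cycle of length at most $6$ is formed), and two $V(F)$-vertices adjacent to distinct but adjacent $a,b\in A$ lie at distance at least $4$ in $F$. I will exploit these spacing constraints, together with a careful choice of the $2$-colouring on the $H$-components (viewed as properly $2$-colouring a small ``conflict'' graph on the components of $H$, and resorting to the freedom to swap colours per component when conflicts arise), to confine any monochromatic component inside $V(F)$ to at most $\binom{k}{2}+k$ vertices, matching the claimed clustering bound.
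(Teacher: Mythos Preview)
Your setup is sound up to the point where you separate $A$ from $V(F)$: making every $A$--$V(F)$ edge bichromatic does force each monochromatic component to lie entirely in $A$ or entirely in $V(F)$. The gap is that you never actually bound the components inside $V(F)$; you only announce a plan.

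That plan does not work as stated. You propose to resolve conflicts by properly $2$-colouring a ``conflict graph'' whose vertices are the components of $H$, but this graph need not be bipartite. Take $A=\{a_1,a_2,a_3\}$, let $F$ consist of a path $u_1u_2u_3$ together with a disjoint edge $w_1w_3$, and add the edges $a_1u_1$, $a_2u_2$, $a_3u_3$, $a_1w_1$, $a_3w_3$. The resulting graph has girth $7$ and $\fvn\leq 3$; no pair $a_i,a_j$ has a common neighbour, so $H$ is edgeless with three singleton components, yet the conflict graph is a triangle (witnessed by $u_1u_2$, $u_2u_3$, $w_1w_3$). So no choice of colours on $H$-components avoids a monochromatic edge among constrained vertices of $V(F)$, and once you accept such edges you still owe the argument that the resulting monochromatic subtrees have at most $\binom{k}{2}+k$ vertices. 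Your ``spacing constraints'' are stated but never turned into a count, and ``extend to free vertices wherever possible'' is not a well-defined colouring rule.

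The auxiliary graph $H$ is a detour. The paper simply colours all of $A$ white (a legal choice within your own framework: assign every $H$-component the same colour), forcing all of $N:=N_G(A)$ black. It then properly $2$-colours each component of the forest $G-A-N$ with its root white; since each root's parent lies in $N$, every white vertex outside $A$ is properly coloured. The girth $\geq 7$ hypothesis is then used twice, quantitatively: (i) a charging argument gives $|E(G[N])|\leq\binom{|A|}{2}$, because two edges of $G[N]$ charged to the same pair $\{a,b\}\in\binom{A}{2}$ would close a cycle of length at most $6$; and (ii) each black vertex outside $N$ has at most $|A|$ neighbours in $N$, since two such neighbours sharing an $A$-neighbour would give a $4$-cycle. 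Combining these with the tree structure yields the $\binom{|A|}{2}+|A|$ bound on black components. If you specialise your colouring of $A$ to be constant, you are essentially at the paper's starting point; what your write-up is missing is precisely this explicit counting argument, not more freedom in colouring $A$.
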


\begin{proof}
Let $A$ be a set of at most $k$ vertices in $G$ such that  $G-A$ is acyclic. Colour each vertex in $A$ white. Let $N:=N_G(A)$. Colour each vertex in $N$ black. Thus each monochromatic component intersecting $A$ is contained in $A$. Let $E:= E(G[N])$. For each edge $vw\in E$ there is a pair $\{a,b\}\in \binom{A}{2}$ such that $va,wb\in E(G)$; charge $vw$ to $\{a,b\}$. If distinct edges $v_1w_1,v_2w_2\in E$ are charged to the same pair $\{a,b\}\in \binom{A}{2}$, then $G[\{v_1,w_1,v_2,w_2,a,b\}]$ contains a cycle, contradicting that $G$ has girth at least 7. Thus no two distinct edges in $E$ are charged to the same pair in $\binom{A}{2}$. Hence $|E|\leq\binom{|A|}{2}$.

By assumption, $G-A$ is a forest. Root each component of $G-A$. Consider each component $C$ of $G-A-N$. Let $r_C$ be the root of $C$. Colour each vertex of $C$ at even distance from $r_C$ in $C$ white. Colour each vertex of $C$ at odd distance from $r_C$ in $C$ black. In particular, $r_C$ is white. Each monochromatic component of $C$ is a single vertex. The parent of $r_C$ in $G-A$ (if it exists) is in $N$, and is coloured black, which is different from the colour of $r_C$. No vertex in $C$ is adjacent to a vertex in $A$. Thus the white vertices in $C$ are properly coloured. 

Let $B$ be a black component of $G$. By construction, $B$ is a subgraph of $G-A$. So $B$ is a tree. Suppose that $|V(B) \setminus N|\geq 2$. Let $x,y$ be distinct vertices in $V(B)\setminus N$ at minimum distance in $B$. Since $G-A-N$ is properly coloured, $x$ and $y$ are not adjacent. The $xy$-path in $B$ must use the parent of $x$ or the parent of $y$ (possibly both). Without loss of generality, the parent $p$ of $x$ is in $B$. By the choice of $x$ and $y$, $p\in N$. Thus $x$ is the root of a component of $G-A-N$, implying that $x$ is white. This contradiction shows that $|V(B) \setminus N|\leq 1$. 
Say $V(B)\setminus N=\{x\}$. If $\deg_B(x)\geq |A|+1$ then there are two neighbours of $x$ both adjacent to the same vertex in $A$, implying that $G$ has a 4-cycle. Thus $\deg_B(x)\leq |A|$. Hence there at most $|A|$ edges in $B-N$. As shown above, $B[N]$ has at most $|E|\leq \binom{|A|}{2}$ edges. Thus $B$ has at most $\binom{|A|}{2}+|A|$ edges. Since $B$ is a tree, $B$ has at most 
$\binom{|A|}{2}+|A|+1$ vertices. Thus this 2-colouring of $G$ has clustering at most $\binom{|A|}{2}+|A|+1 \leq\binom{k}{2}+k+1$. 
\end{proof}

%Consider a black vertex $v$ in $C$. If $v$ has at least $|A|+1$ neighbours in $N$, then $v$ has two neighbours $x,y\in N$ both adjacent to the same vertex $a\in A$, implying that $(v,x,a,y)$ is a 4-cycle in $G$, contradicting that $G$ has girth at least 7. Hence $v$ has at most $|A|$ neighbours in $N$ which will be in the same monochromatic component as $v$. 

%For each vertex $w\in N$ (which is black), any children of $w$ in $G-A-N$ are the root vertex of their component of $G-A-N$, and are thus white. Hence the parent of $w$ is the only possible black neighbour of $w$ in $G-A-N$. The only other black neighbours of $w$ are also in $N$. 

%Hence each black component $B$ of $G$ consists of at most $\binom{|A|}{2}$ edges in $E$, which are incident to at most   $\binom{|A|}{2}+1$ vertices in $N$, plus, for each such vertex $w$ in $B\cap N$, the parent $v$ of $w$ (if it is black), and the at most $|A|-1$ children of $v$ in $N$ distinct from $w$. In total, $B$ has at most $\binom{|A|}{2}+|A|$ vertices. 

%Thus this 2-colouring of $G$ has clustering at most $\binom{|A|}{2}+|A|\leq \binom{k}{2}+k$.

%%%%%%%%%%%%%%%%%%%%%%%%
\subsection{Apex Graphs}
\label{ApexGraphs}

Here we prove clustered colouring results for $(\gamma,k)$-apex graphs with given girth. 

\begin{thm}
Every $(\gamma,k)$-apex graph with girth at least 9 is 2-colourable with clustering at most some function $f(\gamma,k)$.
\end{thm}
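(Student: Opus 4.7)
The plan is to proceed by induction on $k$.

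For the base case $k=0$, the graph $G$ has Euler genus at most $\gamma$ and girth at least $9$, so by Euler's formula $|E(G)|\leq\tfrac{9}{7}(|V(G)|+\gamma-2)$, which is at most $(2-\tfrac{1}{7})|V(G)|$ once $|V(G)|$ is sufficiently large in terms of $\gamma$. Combined with the $O(\sqrt{n})$-sized balanced separators for Euler-genus-$\gamma$ graphs, \cref{SSS} with $k=2$ yields a $2$-colouring of $G$ with clustering bounded by some $f_0(\gamma)$.

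For the inductive step, let $G$ be a $(\gamma,k)$-apex graph with apex set $A$ and girth at least $9$, fix $a\in A$, and apply induction to $G-a$ (which is $(\gamma,k-1)$-apex with girth at least $9$) to obtain a $2$-colouring $\phi$ of $G-a$ with clustering at most $c:=f(\gamma,k-1)$. The crucial consequence of the girth-$9$ condition is that any two distinct vertices $u,v\in N_G(a)\setminus A$ satisfy $\dist_{G-a}(u,v)\geq 7$: otherwise, combining a shortest $uv$-path in $G-a$ with the length-$2$ path $u$-$a$-$v$ would produce a cycle of length at most $8$ in $G$. In particular, the $3$-balls around the vertices of $N_G(a)\setminus A$ in $G-A$ are pairwise disjoint, and since $G-A$ has girth at least $9\geq 2\cdot 3+2$, each such $3$-ball induces a tree. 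I would then modify $\phi$ locally inside these pairwise-disjoint tree-like $3$-balls so that every vertex of $N_G(a)\setminus A$ receives the same colour, say blue, at the cost of a bounded (in $c$) blow-up in clustering. Finally, colour $a$ red: its red component in $G$ then consists of $\{a\}$ together with the red components (under the modified colouring) of at most $k-1$ apex neighbours of $a$, giving a cluster of size bounded in terms of $\gamma$ and $k$.

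The main obstacle is the local recolouring step. Naively flipping a single vertex $v$ of high degree in $G-a$ would merge many previously-separate monochromatic components through $v$, so it is not obvious how to make $N_G(a)\setminus A$ monochromatic with only a bounded blow-up in clustering. The girth-$9$ hypothesis is essential precisely because it guarantees that the regions to be modified are tree-like and pairwise disjoint, so the recolouring only propagates along trees and different apex neighbours contribute independently. Making this local-flip procedure quantitatively precise---and composing the blow-up factors correctly across the $k$ inductive steps---is the main technical burden of the proof.
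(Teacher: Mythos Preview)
Your approach diverges from the paper's, and the recolouring step you flag is a genuine gap rather than a bookkeeping detail.

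The paper does not induct on $k$. It proves directly (\cref{gkApexEdges}) that every $n$-vertex $(\gamma,k)$-apex graph of girth at least $9$ has at most $\tfrac{25}{14}n + \tfrac{9}{7}\gamma + (k+2)\binom{k}{2}$ edges, and then applies \cref{SSS} once to the whole hereditary class, with $\delta=\tfrac{1}{7}$ and balanced separators supplied by the Alon--Seymour--Thomas theorem for minor-closed classes. The girth-$9$ hypothesis is used only in that edge count: with apex set $A$ and $N:=N_G(A)$, the vertices of $N$ with two apex neighbours number at most $\binom{k}{2}$ (else a $4$-cycle); the remaining vertices of $N$ within distance $2$ in $G-A$ of another vertex of $N$ number at most $\binom{k}{2}$ (else a cycle of length $\leq 8$); and the residual vertices of $N$ are pairwise at distance $\geq 3$ in $G-A$, so there are at most $\tfrac12(n-|A|)$ of them. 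Adding these contributions to the $\tfrac97(n-|A|+\gamma-2)$ edges of $G-A$ yields the bound.

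Your recolouring step, by contrast, hits a structural obstruction already at $k=1$. Any modification inside a ball $T_v$ that flips some vertex $u$ from blue to red joins through $u$ every red $\phi$-cluster meeting $N(u)$. Since $u$'s blue $\phi$-cluster has at most $c$ vertices, $u$ has at least $\deg(u)-c+1$ red $\phi$-neighbours, and nothing---neither the girth hypothesis nor the clustering bound on $\phi$---prevents these from lying in $\deg(u)-c+1$ pairwise distinct red $\phi$-clusters, each previously separated from the others precisely by the blue vertex $u$. The girth-$9$ condition makes the $3$-balls tree-like and disjoint, but trees may have vertices of arbitrary degree, so this does not help. As soon as some $v\in N_G(a)$ has $\phi(v)$ red, at least one flip in $T_v$ is forced, and in the worst case every candidate vertex (including $v$ itself) has unbounded degree; the blow-up is then governed by $\Delta(G)$ rather than by $c$. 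The paper's edge-density route sidesteps this entirely.
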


\begin{proof}
Let $G$ be an $n$-vertex $(\gamma,k)$-apex graph, where $n\geq 
n_0:= 14( \tfrac{9}7 \gamma + (k+2)\tbinom{k}{2})$. 
By \cref{gkApexEdges} below, $|E(G)| \leq \tfrac{25}{14}n + \tfrac97 \gamma + (k+2)\tbinom{k}{2} \leq \tfrac{13}{7}n$. By a result of \citet{AST90}, $G$ has a balanced separator of size $cn^{1/2}$ for some $c=c(\gamma,k)$. The result now follows from \cref{SSS} with $\epsilon=\frac12$ and $\delta=\frac17$ and $k=2$.
\end{proof}

\begin{lem}
\label{gkApexEdges}
For all integers $\gamma,k\geq 0$ every $n$-vertex $(\gamma,k)$-apex graph with girth at least 9 has at most $\tfrac{25}{14}n + \tfrac97 \gamma + (k+2)\tbinom{k}{2}$ edges.
\end{lem}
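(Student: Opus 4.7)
Fix a set $A\subseteq V(G)$ with $|A|\leq k$ such that $G':=G-A$ has Euler genus at most $\gamma$, and write $n_0:=|V(G')|\geq n-k$. The plan is to partition
\[
E(G)=E(G[A])\,\sqcup\, E(A,V(G'))\,\sqcup\, E(G')
\]
and bound each piece, then aggregate. Trivially $|E(G[A])|\leq\binom{k}{2}$.

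For $E(G')$, I would apply Euler's formula in the standard way: $G'$ has Euler genus $\leq\gamma$ and, as a subgraph of $G$, girth at least $9$, so every face of a 2-cell embedding has length at least $9$. Combining $n_0-|E(G')|+f\geq 2-\gamma$ with $2|E(G')|\geq 9f$ gives $|E(G')|\leq \tfrac{9}{7}(n_0+\gamma-2)$, with the usual caveat for very small or disconnected components absorbed into the additive constant.

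For $E(A,V(G'))$, set $d_v:=|N_G(v)\cap A|$ for each $v\in V(G')$. Since girth $\geq 9$ forbids $C_4$, any pair $\{a,b\}\in\binom{A}{2}$ has at most one common neighbour in $V(G')$, so $\sum_{v}\binom{d_v}{2}\leq \binom{k}{2}$. In particular at most $\binom{k}{2}$ vertices of $V(G')$ have $d_v\geq 2$, and these contribute at most $k\binom{k}{2}$ apex edges. The remaining apex edges come from vertices with $d_v=1$. To bound their number I would use the stronger girth consequence that for each $a\in A$, the set $N_G(a)\cap V(G')$ is pairwise at $G'$-distance at least $7$: any shorter path in $G'$ combined with $a$ would form a cycle of length $<9$ in $G$. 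Hence the balls of radius $3$ in $G'$ around the vertices of $N_G(a)\cap V(G')$ are pairwise disjoint.

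The main obstacle is that bounding these three pieces in isolation is insufficient: the star example ($k=1$, $G'$ edgeless, $a$ adjacent to all of $V(G')$) saturates the apex-edge count while $|E(G')|$ lies far below its Euler bound. The plan is therefore to \emph{balance} the apex-edge count against the slack in the Euler bound on $|E(G')|$. Concretely, I would subdivide each apex edge $av$ to a path $a-x_{av}-v$, observe that the subdivided graph still has girth at least $9$ (cycles through any apex edge are extended by the subdivisions), and apply Euler's formula (or the $\tfrac{g}{g-2}(n+\gamma-2)$ edge bound) to the resulting graph using the near-embedding inherited from $G'$, noting that adding the $\leq k$ apex vertices contributes only $O(k^2)$ to the effective genus after accounting for the disjoint radius-$3$ balls around the apex neighbourhoods. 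Rearranging the resulting inequality yields
\[
|E(G')|+\tfrac{5}{7}|E(A,V(G'))|\leq \tfrac{9}{7}(n+\gamma-2)+O(k^2),
\]
and combining with the crude bound $|E(A,V(G'))|\leq n_0+k\binom{k}{2}$ on the residual $\tfrac{2}{7}|E(A,V(G'))|$ gives an overall edge bound of the form $\tfrac{9}{7}(n+\gamma-2)+\tfrac{n}{2}+O(k^2)$, which simplifies to the stated $\tfrac{25}{14}n+\tfrac{9}{7}\gamma+(k+2)\binom{k}{2}$ after collecting constants.
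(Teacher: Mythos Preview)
Your decomposition into $E(G[A])$, $E(A,V(G'))$, $E(G')$ and the bounds on the first and third pieces are fine, and you correctly identify that the naive bound $|E(A,V(G'))|\leq n_0+k\binom{k}{2}$ only gives $\tfrac{16}{7}n+\cdots$, so some balancing is needed. The gap is in your balancing mechanism.

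The claim that the subdivided graph has Euler genus $\gamma+O(k^2)$ is false. Subdivision is a topological operation and does not change the genus, so you are really asserting that $G$ itself has Euler genus $\gamma+O(k^2)$. Here is a counterexample: take $K_{3,m}$ with parts $\{a,b,c\}$ and $\{v_1,\dots,v_m\}$, and subdivide every edge four times. The resulting graph $G$ has girth $20\geq 9$, and removing $A=\{a,b,c\}$ leaves a forest, so $G$ is $(0,3)$-apex. But the Euler genus of $G$ equals that of $K_{3,m}$, which grows linearly in $m$. The ``disjoint radius-$3$ balls'' observation gives no control on genus: those balls constrain distances in $G'$, not the facial structure of any embedding, and adding a single apex vertex of degree $d$ to an embedded graph can raise the genus by $\Theta(d)$. (Your final arithmetic is also off: your displayed inequality plus the crude bound gives a residual $\tfrac{2}{7}n$, not $\tfrac{n}{2}$.)

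The paper's argument balances differently and entirely combinatorially. It partitions $N=N_G(A)\setminus A$ into $X$ (vertices with $\geq 2$ apex neighbours, so $|X|\leq\binom{k}{2}$), $Y$ (vertices of $N\setminus X$ within distance $2$ in $G-A$ of another vertex of $N$; a charging to pairs in $\binom{A}{2}$ using girth $\geq 9$ gives $|Y|\leq\binom{k}{2}$), and $Z=N\setminus(X\cup Y)$. Vertices of $Z$ are pairwise at distance $\geq 3$ in $G-A$, so after disposing of isolated $Z$-vertices by induction their closed neighbourhoods in $G-A$ are disjoint and each has size $\geq 2$, giving $|Z|\leq\tfrac12(n-|A|)$. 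Then $|E(A,V(G'))|\leq k|X|+|Y|+|Z|$, and adding the three pieces yields exactly $\tfrac{9}{7}(n-|A|+\gamma-2)+\tfrac12(n-|A|)+(k+2)\binom{k}{2}$, which is the stated bound. The $\tfrac12$ you were aiming for comes from this disjoint-closed-neighbourhood count, not from any genus argument.
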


\begin{proof}
Let $G$ be an $n$-vertex $(\gamma,k)$-apex graph with girth at least 9. Thus there exists $A\subseteq V(G)$ with $|A|\leq k$ such that $G-A$ has Euler genus at most $\gamma$. By Euler's formula, $G-A$ has at most $\frac97(n-|A|+\gamma-2)$ edges. Let $N$ be the set of vertices in $G-A$ adjacent to at least one vertex in $A$. Let $X$ be the set of vertices in $N$ adjacent to at least two vertices in $A$. Thus $|X|\leq\binom{k}{2}$, as otherwise there exists two vertices in $X$ adjacent to the same two vertices in $A$, implying that $G$ contains a 4-cycle. Let $Y$ be the set of vertices in $N\setminus X$ that are at distance at most 2 in $G-A$ from another vertex in $N$. For each $v\in Y$ there are vertices $a,b\in A$ and there are edges $va,wb\in E(G)$ for some vertex $w\in N$ where $\dist_{G-A}(v,w)\leq 2$. In this case, charge  $v\in Y$ to the set $\{a,b\}$. No two vertices in $Y$ are charged to the same set $\{a,b\}\in\binom{A}{2}$, as otherwise $G$ contains a cycle of length at most 8. Thus $|Y|\leq\binom{k}{2}$. Let $Z:=N\setminus(X\cup Y)$. Every pair of vertices in $Z$ are at distance at least 3 in $G-A$. For each vertex $v\in Z$, if $v$ is isolated in $G-A$ then $\deg_G(v)\leq 1$, and by induction, 
\begin{align*}
|E(G)| \leq |E(G-v)|+1\leq 
\tfrac{25}{14}(n-1) + \tfrac97 \gamma + (k+1)\tbinom{k}{2} + 1 \leq
\tfrac{25}{14}n + \tfrac97 \gamma + (k+1)\tbinom{k}{2}.
\end{align*}
Now assume that no vertex in $Z$ is isolated in $G-A$. 
Thus $|N_{G-A}[v]|\geq 2$ for each vertex $v\in Z$, and 
$N_{G-A}[v] \cap N_{G-A}[w]=\emptyset$ for all distinct $v,w\in Z$. Hence $|Z|\leq \frac12(n-|A|)$. Now, 
\begin{align*}
|E(G)|
& \leq |E(G-A)| +|E(G[A])| + k |X| + |Y| + |Z| \\
& \leq
\tfrac97(n-|A|+\gamma-2) + \tbinom{k}{2} + (k+1)\tbinom{k}{2} + \tfrac12(n-|A|)\\
& \leq \tfrac{25}{14}n + \tfrac97 \gamma + (k+2)\tbinom{k}{2},
\end{align*}
as desired. 
\end{proof}

The next result shows that \cref{SSS} cannot prove 2-colourability for 1-apex graphs with girth 5. 

\begin{prop}
For infinitely many values of $n$ there is an $n$-vertex $1$-apex graph with girth 5 and with $(2-o(1))n$ edges.
\end{prop}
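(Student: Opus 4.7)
My plan is to take a large fullerene~$F$---a $3$-regular planar graph of girth~$5$ whose faces are all pentagonal or hexagonal---insert a degree-$2$ ``chord'' vertex into each hexagonal face, and then attach an apex to all the chord vertices. By Euler's formula a fullerene on $n_0$ vertices has $12$ pentagonal and $h := n_0/2 - 10$ hexagonal faces; fullerenes exist for every even $n_0 \geq 24$, which yields infinitely many admissible orders.

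Concretely, in each hexagonal face $F'$ of $F$ I would pick an ``antipodal pair'' $\{x_{F'}, y_{F'}\}$ of vertices at face-distance~$3$ around the hexagon, introduce a new vertex $w_{F'}$ drawn inside $F'$ with edges to $x_{F'}$ and $y_{F'}$ (splitting $F'$ into two pentagonal faces), and finally add an apex $v$ joined to every $w_{F'}$. The resulting graph $G$ is $1$-apex, and a direct count gives $|V(G)|=n_0+h+1=\tfrac{3}{2}n_0-9$ and $|E(G)|=\tfrac{3}{2}n_0+2h+h=3n_0-30$, so $|E(G)| = 2|V(G)| - 12 = (2-o(1))|V(G)|$ as $n_0 \to \infty$, as required.

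Two things must be checked for girth~$5$. First, no $4$-cycle through any individual $w_{F'}$: this requires that $x_{F'}$ and $y_{F'}$ have no common $F$-neighbour, which holds automatically because $F$ is $3$-connected (any two faces of a $3$-connected planar graph share at most one edge, whereas a common neighbour of $x_{F'}$ and $y_{F'}$ would force a pentagon of $F$ sharing three edges with $F'$). Second, no $4$-cycle through the apex~$v$: this requires the selected antipodal pairs to be \emph{pairwise vertex-disjoint} across distinct hexagonal faces, so that $w_{F'}$ and $w_{F''}$ never share a neighbour in $G-v$. The combinatorial pair-selection step is the main obstacle.

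I plan to resolve it by working with zigzag-nanotube fullerenes of increasing length and, in each bulk hexagon, choosing the ``axial'' antipodal pair---its top and bottom vertex with respect to the tube axis. A short coordinate calculation shows that on the cylindrical bulk these pairs are automatically vertex-disjoint: drawn with pointy-top orientation, hexagon centres lie at heights that are multiples of $3/2$, while the axial pair of a hex at centre-height $y$ uses the vertices at heights $y+1$ and $y-1$, and these never coincide for distinct bulk hexes. Only an $O(1)$ number of hexes abutting the two pentagonal caps may need to be dropped, which is harmless since $h = \Theta(n_0)$, so the asymptotic edge count is unaffected.
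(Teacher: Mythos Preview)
Your approach is correct and genuinely different from the paper's. The paper starts from a square grid, subdivides the vertical edges $(x,y)(x,y+1)$ with $x+y$ even, observes that all faces become pentagons and that any two division vertices lie at distance at least~$3$, and then attaches the apex to the division vertices. This is considerably more elementary: the ``distance $\geq 3$'' condition replaces your pairwise-disjointness requirement, it is verified by a direct parity check on grid coordinates, and no appeal to fullerenes, $3$-connectivity, or nanotube geometry is needed. Your construction, by contrast, starts from a graph that already has girth~$5$ and engineers the apex attachment so as not to shorten it; this is more work but yields a $3$-regular planar part and is arguably more conceptual once the combinatorial selection is done. One small correction: your justification of claim~(1) should say that a common neighbour $a$ of $x_{F'},y_{F'}$ forces the face of $F$ lying between the edges $ax_{F'}$ and $ay_{F'}$ to share \emph{at least two} edges with $F'$ (not necessarily three), which already contradicts $3$-connectivity; the rest of your argument, including the height computation showing that axial top/bottom vertices never coincide in the cylindrical bulk, goes through as written.
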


\begin{proof}
Let $G$ be an $n$-vertex planar square grid graph with vertices labelled by their coordinates $(x,y)$. Let $G'$ be the graph obtained from $G$ by subdividing each edge $(x,y)(x,y+1)$ where $x+y$ is even once. Each face of $G'$ has size 5, and any two division vertices are at distance at least 3 in $G'$. Let $G''$ be obtained from $G'$ by adding a vertex $v$ adjacent to every division vertex in $G'$. 
By construction, $G''$ is a $1$-apex graph. Moreover, $G''$ has girth 5, since any two division vertices are at distance at least 3 in $G'$. Note that $G$ has $(2-o(1))n$ edges and $(1-o(1))n$ faces. Thus $G'$ has $n$ original vertices, $(\frac12-o(1))n$ division vertices, and $(\frac52-o(1))n$ edges. Hence $G''$ has $(\frac32-o(1))n$ vertices and $(3-o(1))n$ edges. The result follows. 
\end{proof}

The next result shows that \cref{SSS} cannot prove 2-colourability for $k$-apex graphs with girth 8. 

\begin{prop}
For infinitely many values of $n$ there is an $n$-vertex $23$-apex graph with girth 8 and with $(2-o(1))n$ edges.
\end{prop}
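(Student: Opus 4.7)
The plan is to mimic the preceding proposition, but to subdivide more aggressively so that the planar part already achieves girth~$8$, and to use $23$ apex vertices in place of one. Begin with an $m\times m$ square grid $G$, and let $G'$ be obtained by subdividing every edge of $G$ exactly once. Every bounded face of $G'$ has length~$8$, so $G'$ is planar with girth~$8$, $|V(G')|=(3-o(1))m^{2}$, and $|E(G')|=(4-o(1))m^{2}$.

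The main step is a $23$-colouring of $E(G)$ in which any two edges of the same colour have no pair of endpoints within $G$-distance~$1$; equivalently, they share no vertex \emph{and} no endpoint of one is joined by an edge of $G$ to an endpoint of the other. Let $H$ be the conflict graph on vertex set $E(G)$ with this adjacency rule. A direct local count around a grid edge $e=(x,y)(x,y+1)$ shows $\Delta(H)\le 22$: there are at most $6$ other edges incident to an endpoint of $e$, and at most $16$ further edges with an endpoint in the neighbourhood of $\{(x,y),(x,y+1)\}$ but disjoint from $e$ --- namely the $18$ ``half-edges'' leaving the $6$ neighbours of the endpoints of $e$, minus the two doubly-counted edges $(x\pm1,y)(x\pm1,y+1)$. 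A greedy colouring of $H$ then produces the required $23$-edge-colouring of $G$.

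Next, form $G''$ by adding apex vertices $v_{1},\dots,v_{23}$, with $v_{i}$ adjacent to every division vertex $d_{e}$ such that $e$ has colour~$i$. Removing the apex vertices returns the planar graph $G'$, so $G''$ is $23$-apex. Since each division vertex gains exactly one apex edge, $|V(G'')|=(3-o(1))m^{2}$ and $|E(G'')|=(4-o(1))m^{2}+(2-o(1))m^{2}=(6-o(1))m^{2}$, giving $|E(G'')|=(2-o(1))|V(G'')|$; the infinitely many values of $n$ are obtained by letting $m\to\infty$.

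Finally I would verify that $G''$ has girth exactly~$8$. The key identity is $\dist_{G'}(d_{e},d_{f})=2+2\dist_{G}(e,f)$ for distinct division vertices, which gives $\ge 2$ in general and $\ge 6$ when $e,f$ share a colour. Classify any cycle $C$ of $G''$ by the number $k$ of apex vertices it meets: $k=0$ yields $|C|\ge 8$ by the girth of $G'$; $k=1$ forces the remainder of $C$ to be a $G'$-path between two same-coloured division vertices, so $|C|\ge 2+6=8$; for $k\ge 2$, each of the $k$ arcs between consecutive apex visits consists of two apex-incident edges together with a $G'$-path of length $\ge 2$ between distinct division vertices (no division vertex is adjacent to two apex vertices), contributing at least $4$, so $|C|\ge 4k\ge 8$. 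A cycle of length~$8$ is realised by any subdivided $4$-face, so the girth equals~$8$. I expect the delicate part of the argument to be the bound $\Delta(H)\le 22$: the slack against the allowed $23$ colours is exactly one, so any miscount in the local analysis would break the construction.
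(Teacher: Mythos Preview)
Your proposal is correct and follows essentially the same construction as the paper. The only cosmetic difference is that the paper phrases the colouring as a $23$-colouring of the division vertices of $G'$ so that like-coloured vertices are at $G'$-distance at least~$6$, whereas you phrase it as a $23$-edge-colouring of $G$ so that like-coloured edges are at edge-distance at least~$2$; via your identity $\dist_{G'}(d_e,d_f)=2+2\dist_G(e,f)$ these are the same condition, and both arguments reduce to the local count $\Delta(H)\le 22$ followed by the same case analysis on $|C\cap A|$.
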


\begin{proof}
Let $G$ be an $n$-vertex planar square grid graph.
%, drawn in the natural way with straight edges, with each vertex having even $(2x,2y)$ coordinates. 
Let $G'$ be the graph obtained from $G$ by subdividing each edge once. 
%Division vertices are at $(2x+1,2y)$ or $(2x,2y+1)$. 
Each division vertex of $G'$ is at distance at most 5 from at most 22 other division vertices. Thus each division vertex of $G'$ can be coloured by one element of $\{1,\dots,23\}$ so that any two vertices of the same colour are at distance at least 6 in $G'$. Let $G''$ be obtained from $G'$ by adding a set of vertices $A:=\{v_1,\dots,v_{23}\}$ where each $v_i$ is adjacent to every vertex in $G'$ coloured $i$. By construction, $G''$ is a $23$-apex graph. We claim it has girth 8. clearly, every cycle in $G$ has length at least 8. Consider a cycle $C$ that intersects $A$. If  $C\cap A=\{v_i\}$, then $|C|\geq 8$ since 
vertices of $G'$ coloured $i$ are at distance at least 6 in $G'$.
If $C\cap A=\{v_i,v_j\}$, then $|C|\geq 8$ since vertices of $G'$ coloured $i$ and $j$ are at distance at least 2 in $G'$. Similarly, if $|C\cap A|\geq 3$ then $|C| \geq 12$. Hence $G$ has girth 8. Note that $G$ has $(2-o(1))n$ edges. Thus $G'$ has $n$ original vertices, $(2-o(1))n$ division vertices, and $(4-o(1))n$ edges. Hence $G''$ has $(3-o(1))n$ vertices and $(6-o(1))n$ edges. The result follows. 
\end{proof}

\begin{thm}
Every $(\gamma,k)$-apex graph with girth at least 5 is 3-colourable with clustering at most some function $f(\gamma,k)$.
\end{thm}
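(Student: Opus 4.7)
The plan is to invoke \cref{SSS} with $k=3$, $\epsilon=\tfrac12$, and some $\delta>0$, applied to the hereditary (indeed, minor-closed) class of $(\gamma,k)$-apex graphs with girth at least $5$. Two ingredients are required. First, balanced separators of order $O(\sqrt{n})$ are supplied by the \citet{AST90} separator theorem, since $(\gamma,k)$-apex graphs form a minor-closed class. Second, one needs an edge-count bound of the form $|E(G)| \leq (3-\delta)n$ for all sufficiently large $G$ in the class, which is the main technical step.

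To establish the edge bound, I would prove an analogue of \cref{gkApexEdges} at girth $5$: every $n$-vertex $(\gamma,k)$-apex graph $G$ with girth at least $5$ has at most $\tfrac{8}{3}n + \tfrac{5}{3}\gamma + (k+1)\binom{k}{2}$ edges. The argument follows the same pattern. Fix an apex set $A \subseteq V(G)$ with $|A|\leq k$, so that $G-A$ has Euler genus at most $\gamma$ and (inheriting from $G$) girth at least $5$. Euler's formula then gives $|E(G-A)| \leq \tfrac{5}{3}(n-|A|+\gamma-2)$. The edges inside $A$ contribute at most $\binom{k}{2}$. For the edges from $A$ to $V(G)\setminus A$, let $X$ be the set of vertices of $V(G)\setminus A$ with at least two neighbours in $A$: any two such vertices sharing a common pair $\{a,b\}\subseteq A$ as neighbours would create a $4$-cycle, which is forbidden. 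Hence $|X|\leq \binom{k}{2}$, contributing at most $k\binom{k}{2}$ edges, while each vertex in $V(G)\setminus (A\cup X)$ contributes at most one edge to $A$, totalling at most $n-|A|$ further edges. Summing yields the stated bound.

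Provided $n\geq n_0(\gamma,k)$, the bound $\tfrac{8}{3}n+O(\gamma+k^3)$ is at most $(3-\tfrac14)n$, so \cref{SSS} with $k=3$, $\epsilon=\tfrac12$, $\delta=\tfrac14$ yields a $3$-colouring with clustering bounded by a function of $\gamma$ and $k$, as desired. The main obstacle is really just the bookkeeping in the apex setting; but the girth hypothesis here is mild enough that the only short cycles one must rule out in the co-degree analysis from $V(G)\setminus A$ into $A$ are $4$-cycles, so the argument is substantially simpler than the girth $\geq 9$ case treated in \cref{gkApexEdges}, where $4$-, $6$-, and $8$-cycles all had to be excluded simultaneously.
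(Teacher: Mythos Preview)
Your proposal is correct and matches the paper's proof essentially line for line: the paper also invokes \cref{SSS} with $k=3$, $\epsilon=\tfrac12$ via the \citet{AST90} separator theorem, and proves the identical edge bound $|E(G)|\leq \tfrac{8}{3}n+\tfrac{5}{3}\gamma+(k+1)\binom{k}{2}$ by exactly the decomposition you describe (Euler on $G-A$, $\binom{k}{2}$ inside $A$, the set $X$ bounded by $\binom{k}{2}$ via the $4$-cycle argument, and one edge per remaining vertex into $A$). The only cosmetic difference is that the paper takes $\delta=\tfrac16$ rather than your $\delta=\tfrac14$.
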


\begin{proof}
Let $G$ be an $n$-vertex $(\gamma,k)$-apex graph, where $n\geq 
n_0:= 6( \tfrac53 \gamma + (k+1)\tbinom{k}{2})$. By \cref{gkApexEdgesGirth5} below, $|E(G)| \leq \tfrac{8}{3}n + \tfrac53 \gamma + (k+1)\tbinom{k}{2} \leq \tfrac{17}{6}n$. By a result of \citet{AST90}, $G$ has a balanced separator of size $cn^{1/2}$ for some $c=c(\gamma,k)$. The result now follows from \cref{SSS} with $\epsilon=\frac12$ and $\delta=\frac16$ and $k=3$.
\end{proof}

\begin{lem}
\label{gkApexEdgesGirth5}
Every $n$-vertex $(\gamma,k)$-apex graph $G$ with girth at least 5 has at most $\tfrac{8}{3}n + \tfrac53 \gamma + (k+1)\tbinom{k}{2}$ edges.
\end{lem}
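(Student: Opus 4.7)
The plan is to split $E(G)$ into the edges of $G-A$, the edges of $G[A]$, and the edges with exactly one endpoint in $A$, and to bound each part separately. Fix $A\subseteq V(G)$ with $|A|\leq k$ such that $G-A$ has Euler genus at most $\gamma$. Since $G-A$ is a subgraph of $G$, it also has girth at least 5, so Euler's formula yields $|E(G-A)|\leq \tfrac{5}{3}(n-|A|+\gamma-2)$. Trivially $|E(G[A])|\leq \binom{k}{2}$.

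For the third class of edges, let $N$ be the set of vertices of $G-A$ adjacent to some vertex of $A$, let $X\subseteq N$ be those with at least two neighbours in $A$, and let $Z:=N\setminus X$, so each vertex of $Z$ has exactly one neighbour in $A$. If two distinct vertices of $X$ had a common pair of neighbours in $A$, these four vertices would span a 4-cycle in $G$, contradicting girth at least 5. Hence the map $v\mapsto \{a,b\}$ (choosing any two neighbours $a,b\in A$ of $v\in X$) is injective, giving $|X|\leq\binom{k}{2}$. The number of edges between $X$ and $A$ is therefore at most $k|X|\leq k\binom{k}{2}$, while the number of edges between $Z$ and $A$ equals $|Z|\leq n-|A|$.

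Summing the four contributions, and combining the two $(n-|A|)$ terms, yields
\[
|E(G)| \leq \tfrac{5}{3}(n-|A|+\gamma-2) + \binom{k}{2} + k\binom{k}{2} + (n-|A|)
= \tfrac{8}{3}(n-|A|) + \tfrac{5}{3}\gamma - \tfrac{10}{3} + (k+1)\binom{k}{2},
\]
and the claimed bound follows because $-\tfrac{8}{3}|A|-\tfrac{10}{3}\leq 0$. The only non-routine point is the girth argument that $|X|\leq\binom{k}{2}$, which is completely analogous to the $|X|\leq\binom{k}{2}$ step in the proof of \cref{gkApexEdges}. The crude estimate $|Z|\leq n-|A|$ suffices precisely because the girth-5 Euler coefficient $\tfrac{5}{3}$ leaves just enough slack: adding $1$ to it gives exactly the target leading coefficient $\tfrac{8}{3}$ on $n$.
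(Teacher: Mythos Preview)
Your proof is correct and follows essentially the same approach as the paper: decompose $E(G)$ into edges inside $G-A$, edges inside $G[A]$, and cross edges, bound $|E(G-A)|$ via Euler's formula for girth~$\geq 5$ graphs on a surface, and bound the cross edges by splitting $N$ into the vertices $X$ with at least two neighbours in $A$ (where the pigeonhole/4-cycle argument gives $|X|\leq\binom{k}{2}$) and the remaining vertices with exactly one neighbour in $A$. The only cosmetic difference is that you bound the single-neighbour contribution by $|Z|\leq n-|A|$ whereas the paper uses the slightly weaker $|N|\leq n$; both lead to the same final inequality.
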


\begin{proof}
By assumption, there exists $A\subseteq V(G)$ with $|A|\leq k$ such that $G-A$ has Euler genus at most $\gamma$. By Euler's formula, $G-A$ has at most $\frac53(n-|A|+\gamma-2)$ edges. Let $N$ be the set of vertices in $G-A$ adjacent to at least one vertex in $A$. Let $X$ be the set of vertices in $N$ adjacent to at least two vertices in $A$. Thus $|X|\leq\binom{k}{2}$, as otherwise there exists two vertices in $X$ adjacent to the same two vertices in $A$, implying that $G$ contains a 4-cycle. Now, 
\begin{align*}
    |E(G)|\leq 
|E(G-A)| + |E(G[A])| + k |X| + |N| 
& \leq
\tfrac53(n-|A|+\gamma -2) + \tbinom{k}{2} + k\tbinom{k}{2} + n\\
& \leq \tfrac{8}{3}n + \tfrac53 \gamma + (k+1)\tbinom{k}{2},
\end{align*}
as desired. 
\end{proof}

%%%%%%%%%%%%%%%%%%%%%%%%%%%%%%%%%%%%%%%%%%%%%%%%
\section{Increasing Girth}
\label{BigGirth}

\citet{Thomassen83} first showed that graphs with minimum degree 3 and sufficiently large girth contains a minor with large minimum degree. In particular, every graph with minimum degree at least 3 and girth at least $4k-3$ contains a minor with minimum degree $k$. \citet{Mader98} improved the bounds substantially, by showing that every graph with minimum degree at least 3 and girth at least $8k+3$ contains a minor with minimum degree $2^k$. 
\citet{KO03} improved the constant $8$ to $4$ as follows:

%\citet{KO03} also prove the following for minimum degree 3 and girth at least 7. 

\begin{lem}[\citep{KO03}]
\label{MinDegree3Girth}
For any integers $k\geq 1$ and $r\geq 3$ every graph $G$ of minimum degree $r$ and girth at least $4k+3$ contains a minor of minimum degree at least
 $\frac{1}{48}(r-1)^{k+1}$.
\end{lem}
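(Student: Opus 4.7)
I would prove this by the classical breadth-first-search contraction technique that goes back to Mader's density-from-girth bounds.

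First, assume WLOG that $G$ is connected, fix an arbitrary vertex $v_0 \in V(G)$, and do a BFS from $v_0$, writing $V_i := \{v : \dist_G(v_0, v) = i\}$. The girth condition $g(G) \geq 4k+3$ forces a near-tree structure on the ball $B_{2k+1}(v_0)$: any edge inside $V_i$ for $i \leq 2k$ would close a cycle of length at most $2i+1 \leq 4k+1$, and any vertex of $V_i$ (for $i \leq 2k+1$) with two neighbours in $V_{i-1}$ would close a cycle of length at most $4k+2$. Both are forbidden, so every vertex in $V_i$ for $0 \leq i \leq 2k$ has at least $r-1$ distinct children in $V_{i+1}$. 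Hence $|V_i| \geq (r-1)^i$ for all $0 \leq i \leq 2k+1$, and in particular $|V_{k+1}| \geq (r-1)^{k+1}$.

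Next, I extend a BFS spanning tree $T$ of $B_{2k+1}(v_0)$ arbitrarily to a BFS spanning tree of all of $G$, and for each $u \in V_{k+1}$ let $B_u$ be the set of all $T$-descendants of $u$ (at any depth, including $u$ itself). The collection $\{B_u : u \in V_{k+1}\}$ partitions $V(G) \setminus (V_0 \cup \cdots \cup V_k)$, each $B_u$ induces a connected subgraph of $G$, and so contracting each $B_u$ to a single vertex $\hat u$ and deleting the remaining vertices $V_0 \cup \cdots \cup V_k$ yields a minor $H$ of $G$ on vertex set $\{\hat u : u \in V_{k+1}\}$ with at least $(r-1)^{k+1}$ vertices.

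The heart of the argument is to lower-bound $\deg_H(\hat u)$. By the tree-like structure, any vertex of $B_u$ at depth $\leq 2k$ has all its $G$-neighbours inside $B_u$, so cross-edges must originate from $B_u \cap V_{\geq 2k+1}$. Focusing on the $(r-1)^k$ vertices of $B_u \cap V_{2k+1}$, each has $r-1$ non-parent neighbours that lie \emph{outside} $B_u$ (a further short-cycle check rules out horizontal edges within $B_u$ at level $2k+1$). This produces $(r-1)^{k+1}$ outgoing incidences from $\hat u$ in $H$. Invoking the girth once more, two parallel cross-edges from $B_u$ to the same $B_{u'}$ would close a cycle through the BFS tree whose length is bounded linearly in $k$, so the multiplicity of every pair $(\hat u, \hat{u'})$ is an absolute constant. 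Combined with the standard fact that every graph contains a subgraph of minimum degree at least half its average degree, one extracts a subminor of $H$ of minimum degree at least $\tfrac{1}{48}(r-1)^{k+1}$.

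The main technical hurdle is the multiplicity analysis of the previous paragraph: one must carefully account for cross-edges at different depths (horizontal at level $2k+1$; descending to level $2k+2$, where a vertex may have several parents lying in different top-level branches of $T$; and even deeper excursions that re-enter $B_u$), and check that the combined losses — including the factor of $2$ from the average-to-minimum-degree step — fit inside the explicit constant $\tfrac{1}{48}$. The geometric growth of the BFS layers and the identification of the branch sets are routine; the constant $48$ is an artefact of this bookkeeping and is not the essential content of the lemma.
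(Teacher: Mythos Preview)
The paper does not prove this lemma; it is quoted from K\"uhn and Osthus~\citep{KO03} and used as a black box, so there is no in-paper argument to compare against. Your BFS-and-contract outline is in the right spirit and resembles the argument the paper \emph{does} supply for the weaker \cref{Mader}.

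However, your sketch has a gap prior to the multiplicity step you flag. The assertion that each $w\in B_u\cap V_{2k+1}$ has $r-1$ non-parent neighbours \emph{outside} $B_u$ is not justified: your short-cycle check handles horizontal edges inside $B_u$ at level $2k+1$, but it says nothing about the $T$-children of $w$, which lie in $B_u\cap V_{2k+2}$ and may absorb any or all of $w$'s remaining incidences. Hence the $(r-1)^{k+1}$ outgoing-edge count is not established. The same unbounded-depth issue undermines the multiplicity claim: once branch sets $B_u,B_{u'}$ extend arbitrarily deep, two cross-edges between them can sit far down the BFS tree and close a cycle of length $\gg 4k+3$, so the girth hypothesis alone no longer bounds the multiplicity by an absolute constant.

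The repair is to keep every branch set of bounded radius. In the paper's proof of \cref{Mader} (girth $8k+3$) this is done by taking a maximal set of vertices at pairwise distance $\geq 2k+1$, growing disjoint radius-$k$ balls, and extending them to a partition into parts of radius $\leq 2k$; two cross-edges between parts then force a cycle of length $\leq 8k+2$, giving multiplicity~$1$. Obtaining the same conclusion under the tighter girth hypothesis $4k+3$, and with exponent $k+1$ rather than $k$, requires the more delicate construction of~\citep{KO03}, which your sketch does not reproduce.
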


We use \cref{MinDegree3Girth} and the following lemma independently due to \citet{Kostochka82,Kostochka84} and \citet{Thomason84,Thomason01} to show that graphs excluding a fixed minor and with large girth are properly 3-colourable.

\begin{lem}[\citep{Thomason84,Thomason01,Kostochka82,Kostochka84}]
\label{KtMinorExtremal}
Every $K_t$-minor-free graph has minimum degree at most $ct\sqrt{\log t}$ for some absolute constant $c$. 
\end{lem}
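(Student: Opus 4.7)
My plan is to prove the contrapositive: if $G$ has sufficiently large minimum degree $d$, then $G$ contains $K_t$ as a minor with $t \geq c' d / \sqrt{\log d}$, which rearranges to the stated bound.

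The first step is to reduce to a regular, structured setting. A standard degeneracy argument (iteratively remove low-degree vertices) shows that a graph with average degree $\bar d$ has a subgraph with minimum degree at least $\bar d /2$, so we may assume $G$ has minimum degree roughly $d$ throughout. It is convenient to further pass to a subgraph where no small vertex set has an unusually small neighbourhood; such a "no-sparse-cut" subgraph always exists by a removal argument, and within it one can grow connected sets of prescribed size with control on their external neighbourhoods.

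The heart of the proof, following Bollob\'as--Catlin--Erd\H{o}s and refined by Kostochka and Thomason, is a probabilistic construction of a $K_t$-minor model $(B_1,\dots,B_t)$. Choose a parameter $r = \Theta(\sqrt{\log t})$ and a set of $t$ random seed vertices. Around each seed grow a connected branch set $B_i$ of size $r$ via a short breadth-first process, reserving the remaining vertices of $G$ as "glue". Because the minimum degree is $d$, each $B_i$ sees $\Omega(rd)$ edges leaving it, and a union bound shows that with positive probability the branch sets can be made pairwise disjoint. The key density computation is that the probability a given pair $(B_i,B_j)$ has \emph{no} edge between them is roughly $\exp(-\Omega(r^2 d / n))$; taking a union bound over all $\binom{t}{2}$ pairs, the expected number of non-adjacent pairs is bounded by $t^2 \exp(-\Omega(r^2 d / n))$, and balancing this against $r \cdot t \leq n$ forces $d \leq C t\sqrt{\log t}$. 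Any remaining non-adjacent pairs are then corrected by augmenting branch sets with short paths through the reserved glue vertices, which is affordable because the expander-like structure from the first step guarantees short connecting paths exist and can be chosen vertex-disjoint.

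The main obstacle is the joint analysis in the probabilistic step: one must simultaneously ensure that the $t$ branch sets are disjoint, connected, of the prescribed size $r$, and pairwise joined by either a direct edge or a short augmenting path, while keeping the whole construction feasible. Getting the $\sqrt{\log t}$ factor rather than $\log t$ requires the sharp concentration estimates mentioned above, and in particular a careful trade-off between $r$ (which must be large enough that $r^2 d/n$ exceeds the logarithm of the number of pairs) and $t$ (which must be large relative to $n/r$). A looser argument with slightly larger branch sets yields only $O(t \log t)$ on the minimum degree, so the $\sqrt{\log t}$ improvement is genuinely the technical crux.
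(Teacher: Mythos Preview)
The paper does not prove this lemma; it is simply quoted from the literature (Kostochka, Thomason) and used as a black box in \cref{KtMinorGirth3Colours,KtMinorFreeGirthColour,GirthLogt}. So there is no in-paper argument to compare against.

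On its own merits, your sketch has the right high-level shape but a real gap at the probabilistic step. The estimate that a pair $(B_i,B_j)$ is non-adjacent with probability roughly $\exp(-\Omega(r^2 d/n))$ presupposes random-graph-style independence among potential edges, which is exactly what the Bollob\'as--Catlin--Erd\H{o}s argument exploits \emph{in $G(n,p)$}. In a fixed host graph $G$ with BFS-grown branch sets from random seeds, no such independence is available: once the seeds are chosen the branch sets and their adjacencies are determined, and the ``probability'' you write down has no clear meaning. The actual proofs proceed quite differently --- Kostochka via connected dominating sets and a hypergraph-colouring argument, Thomason via an extremal density lemma and an iterative contraction --- and neither is a union bound over pairs of randomly placed balls. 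Your final ``balancing $r^2 d/n$ against $\log t$'' paragraph is therefore built on an unjustified estimate, and this is precisely the place where the $\sqrt{\log t}$ (rather than $\log t$) is won or lost, so the gap is not cosmetic.
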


\begin{prop}
\label{KtMinorGirth3Colours}
Every $K_t$-minor-free graph $G$ with girth at least
 $(4+o(1))\log_2 t$ is 3-colourable.
\end{prop}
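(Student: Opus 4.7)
The plan is to prove the stronger statement that every $K_t$-minor-free graph with girth at least $(4+o(1))\log_2 t$ is $2$-degenerate, from which $3$-colourability follows by the standard greedy argument. Since the class of $K_t$-minor-free graphs with girth at least $g$ is closed under taking subgraphs (subgraphs are $K_t$-minor-free and only have larger girth), it suffices to show that every such graph $G$ contains a vertex of degree at most $2$; then deleting this vertex and iterating yields a $2$-degenerate ordering.

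To show that $G$ has a vertex of degree at most $2$, I would argue by contradiction: suppose every vertex of $G$ has degree at least $3$. Then $G$ has minimum degree at least $3$, and by \cref{MinDegree3Girth} applied with $r=3$ and a suitably chosen $k$, $G$ contains a minor $H$ with
\[
\delta(H) \ge \tfrac{1}{48} \cdot 2^{k+1}.
\]
On the other hand, $H$ is still $K_t$-minor-free (as a minor of $G$), so by \cref{KtMinorExtremal},
\[
\delta(H) \le c\, t\sqrt{\log t}.
\]
Combining the two inequalities gives $2^{k+1} \le 48 c\, t \sqrt{\log t}$, that is,
\[
k \le \log_2 t + \tfrac{1}{2}\log_2\log_2 t + O(1).
\]
Therefore, if we pick $k := \lceil \log_2 t + \tfrac{1}{2}\log_2\log_2 t \rceil + C$ for a sufficiently large absolute constant $C$, the inequality is violated and we obtain a contradiction. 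For this choice of $k$, \cref{MinDegree3Girth} only requires girth at least $4k+3 = 4\log_2 t + O(\log \log t) = (4+o(1))\log_2 t$, matching the hypothesis.

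The main obstacle, and really the only nontrivial calculation, is choosing $k$ so that the lower bound on $\delta(H)$ from \cref{MinDegree3Girth} exceeds the extremal bound of \cref{KtMinorExtremal}; once this is done, the rest is bookkeeping. I would record the chosen $k$ explicitly, verify that $4k+3 \le (4+o(1))\log_2 t$, and then conclude that $G$ has a vertex $v$ of degree at most $2$. Finally, $G - v$ is again $K_t$-minor-free with girth at least $(4+o(1))\log_2 t$, so by induction on $|V(G)|$ it admits a proper $3$-colouring, which extends to $v$ since $v$ has at most $2$ neighbours.
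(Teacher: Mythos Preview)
Your proof is correct and follows essentially the same approach as the paper: show $G$ is $2$-degenerate by applying \cref{MinDegree3Girth} with $r=3$ to a hypothetical subgraph of minimum degree $3$, then contradict \cref{KtMinorExtremal}. The paper's choice $k:=\lfloor \log_2(48ct\sqrt{\log t})\rfloor$ is the same as yours up to the absorbed constant, and the final step (2-degenerate implies properly $3$-colourable) is identical.
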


\begin{proof}
The precise girth bound we assume is $4k+3$ where 
$k:=\floor{\log_2 (48ct\sqrt{\log t})}$
and $c$ is from \cref{KtMinorExtremal}.
We claim that $G$ is 2-degenerate. Otherwise, $G$ has a subgraph with minimum degree at least 3. By \cref{MinDegree3Girth}, we have $\frac{1}{48}2^{k+1} \leq ct\sqrt{\log t}$, implying
 $k \leq \log_2 (48ct\sqrt{\log t})-1$,
 which contradicts the definition of $k$.
 Hence $G$ is 2-degenerate and properly 3-colourable. 
\end{proof}

Of course, three colours is best possible in \cref{KtMinorGirth3Colours} since odd cycles are $K_4$-minor-free with arbitrarily large girth. 

\citet{KO03} used \cref{MinDegree3Girth} and a variant of \cref{KtMinorExtremal} with explicit bounds on the constant $c$ to conclude that $K_t$-minor-free graphs with girth at least $19$ are properly $(t-1)$-colourable, thus satisfying Hadwiger's Conjecture. In fact, the method of \citet{KO03} gives asymptotically stronger results for graphs of girth $g\geq 5$. To see this, we use the following lemma of \citet{KO03}, which is applicable for girth at least 5 whereas \cref{MinDegree3Girth} assumes girth at least 7.

% \citet{KO03}: 
% Let $k\geq 1$ and $r\geq 3$ be integers and put $g:=4k+3$. Then every graph
% $G$ of minimum degree $r$ and girth at least $g$ contains a minor of minimum degree at least
% $\frac{1}{48}(r-1)^{(g+1)/4}$.

% Say $G$ is $K_t$-minor-free with girth $g=4k+3$ for some integer $k\geq 1$. 
% Let $r$ be the minimum degree of $G$. 
% By \cref{KtMinorExtremal}, 
% $\frac{1}{48}(r-1)^{(g+1)/4} \leq ct\sqrt{\log t}$, implying
% $r \leq (48ct\sqrt{\log t})^{4/(g+1)} +1$.
% That is, $G$ is 
% $\max\{(48ct\sqrt{\log t})^{4/(g+1)} +1,3\}$-degenerate. 
% Hence 
% $\chi(G)\leq \max\{(48ct\sqrt{\log t})^{4/(g+1)},3\} +2$.

% Now the girth $g\geq 5$ method. 

\begin{lem}[\citep{KO03}]
\label{KO} 
For any integers $k\geq 1$ and $r\geq \max\{5k,2\,000\,000\}$, every graph $G$ of minimum degree at least $4r$ and girth at least $4k+1$ contains a minor of minimum degree at least
$\frac{1}{288}r^{(2k+1)/2}$.
\end{lem}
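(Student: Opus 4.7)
The plan is to adapt the BFS-ball contraction strategy underlying \cref{MinDegree3Girth} to exploit the extra information provided by the girth hypothesis. Note first that under girth at least $4k+1$, BFS from any vertex is a tree out to radius $2k$: two internally disjoint paths of length at most $2k$ with common endpoints would combine into a cycle of length at most $4k < 4k+1$. To set up clean neighborhood lower bounds, I would first pass to a subgraph of $G$ of minimum degree at least $2r$ (say) by iteratively deleting vertices of low degree, which preserves girth.

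Next, for each vertex $v$ of this pruned graph, let $T_v$ be the BFS tree of radius $k$ rooted at $v$ and let $L_v$ be its leaves. The tree property up to radius $2k$ forces $|L_v| \geq 4r(4r-1)^{k-1}$, and since each leaf has at least $4r-1$ neighbors outside $T_v$, all distinct across leaves, the number of edges leaving $T_v$ is $\Omega(r^{k+1})$. I would then select a collection $\{v_1,\dots,v_m\}$ of centers pairwise at distance at least $2k+1$, so that the trees $T_{v_i}$ are vertex-disjoint, and take as the candidate minor $H$ the graph obtained by contracting each $T_{v_i}$ to a single vertex $u_i$.

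The crucial step, and the source of the exponent $(2k+1)/2$, is a double-counting argument for $\deg_H(u_i)$. Each $T_{v_i}$ emits $\Omega(r^{k+1})$ boundary edges, distributed among the $T_{v_j}$ with $j \neq i$. Either these edges hit $\Omega(r^{(2k+1)/2})$ distinct indices $j$ for most $i$, in which case $H$ already has the claimed minimum degree, or for many $i$ a single $T_{v_j}$ absorbs $\Omega(r^{(k+1)/2})$ of the boundary edges from $T_{v_i}$, providing $\Omega(r^{(k+1)/2})$ internally disjoint paths between the contracted vertices. In the latter case these paths can be re-routed into enlarged branch sets, yielding the claimed bound through a Cauchy--Schwarz-type rebalancing.

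The main obstacle will be making this rebalancing argument quantitatively tight while tracking the absolute constants, so as to justify the explicit factor $\tfrac{1}{288}$: the pruning, center packing, and pigeonholing steps each cost a small multiplicative factor, and these have to combine cleanly. A secondary subtlety is that the girth bound $4k+1$ is sharp only for BFS trees of radius $2k$, not $2k+1$; this is precisely why the exponent drops from the $k+1$ appearing in \cref{MinDegree3Girth} (which assumed girth $4k+3$) to $(2k+1)/2$ here, and any argument tacitly assuming an extra half-layer of tree structure would have to be reworked carefully at the boundary.
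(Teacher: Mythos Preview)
The paper does not prove this lemma; it is quoted from K\"uhn and Osthus and used as a black box in the derivation of \cref{KtMinorFreeGirthColour}. There is therefore no in-paper argument to compare your sketch against.

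That said, your sketch has a real gap at the step you yourself flag as ``crucial''. The dichotomy you state is arithmetically invalid. If a ball $T_{v_i}$ emits $\Omega(r^{k+1})$ boundary edges and these land in fewer than $r^{(2k+1)/2}$ distinct balls, pigeonhole only guarantees that some single $T_{v_j}$ absorbs at least $r^{k+1}/r^{(2k+1)/2}=r^{1/2}$ of them, not $\Omega(r^{(k+1)/2})$ as you write. Indeed $r^{(2k+1)/2}\cdot r^{(k+1)/2}=r^{(3k+2)/2}\gg r^{k+1}$, so both branches of your dichotomy can fail simultaneously. And even if one granted $r^{(k+1)/2}$ parallel $G$-paths between two branch sets, you have not said what ``re-routing into enlarged branch sets'' actually does or why it would manufacture $r^{(2k+1)/2}$ \emph{distinct} neighbours in the minor; many paths between the same pair contribute a single edge after contraction. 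The half-integer exponent in the K\"uhn--Osthus bound does not fall out of the straightforward ball-packing you describe plus a Cauchy--Schwarz balancing; the BFS-ball contraction is the easy part, and what you have written does not yet bridge from $r^{k+1}$ outgoing edges per ball to $r^{(2k+1)/2}$ distinct neighbouring balls.
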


\begin{prop}
\label{KtMinorFreeGirthColour}
For any integer $k\geq 1$, every  $K_t$-minor-free graph $G$ with girth at least $4k+1$ is properly
$O( (t\sqrt{\log t})^{2/(2k+1)})$-colourable. 
\end{prop}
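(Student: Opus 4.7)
The plan is to mirror the proof of \cref{KtMinorGirth3Colours} almost verbatim, using \cref{KO} in place of \cref{MinDegree3Girth}. The goal is to show that $G$ has bounded degeneracy, which immediately yields a proper colouring via a greedy argument.

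Concretely, let $c$ be the constant from \cref{KtMinorExtremal} and choose $r$ to be the smallest integer such that $\tfrac{1}{288} r^{(2k+1)/2} > c t \sqrt{\log t}$. Solving gives $r = \Theta\bigl((t\sqrt{\log t})^{2/(2k+1)}\bigr)$. I claim $G$ is $(4r-1)$-degenerate. If not, then $G$ has a subgraph $H$ of minimum degree at least $4r$. Since $H$ inherits girth at least $4k+1$ from $G$, \cref{KO} produces a minor $H'$ of $H$ with minimum degree at least $\tfrac{1}{288} r^{(2k+1)/2} > c t \sqrt{\log t}$. But $H'$ is a minor of $G$, hence $K_t$-minor-free, contradicting \cref{KtMinorExtremal}. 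Thus $G$ is $(4r-1)$-degenerate, and a greedy colouring produces a proper $4r$-colouring, giving the claimed bound.

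The only real subtlety is the side condition $r \geq \max\{5k, 2\,000\,000\}$ in \cref{KO}. For fixed $k$ this is automatic once $t$ is large enough, since our chosen $r$ grows with $t$. For the finitely many small values of $t$ where $r$ would fall short of this threshold, \cref{KtMinorExtremal} already yields a trivial proper $O(t\sqrt{\log t})$-colouring; since in this regime $t$ (and hence $t\sqrt{\log t}$) is bounded by an absolute constant depending only on $k$, this trivial bound is absorbed by the asymptotic estimate $O\bigl((t\sqrt{\log t})^{2/(2k+1)}\bigr)$. So I expect no genuine obstacle beyond this bookkeeping, and the whole argument amounts to one application of \cref{KO} followed by one application of \cref{KtMinorExtremal}, exactly as in \cref{KtMinorGirth3Colours}.
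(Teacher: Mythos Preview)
Your proposal is correct and follows essentially the same route as the paper: show $G$ has degeneracy $O(r)$ with $r=\Theta\bigl((t\sqrt{\log t})^{2/(2k+1)}\bigr)$ by combining \cref{KO} with \cref{KtMinorExtremal}, then colour greedily. The only cosmetic difference is that the paper absorbs the side condition $r\geq\max\{5k,2\,000\,000\}$ directly into the definition of $r$ by taking a maximum, whereas you handle it as a separate small-$t$ case; both are fine.
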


\begin{proof}
We claim that $G$ is $4r$-degenerate where
$$r:=\max\{\floor{(288ct\sqrt{\log t})^{2/(2k+1)}}+1,5k,2\,000\,000\}$$ 
and $c$ is from \cref{KtMinorExtremal}. Suppose that a subgraph of $G$ has minimum degree at least $4r$. By \cref{KtMinorExtremal,KO}, we have
$\frac{1}{288}r^{(2k+1)/2} \leq ct\sqrt{\log t}$, implying
$r \leq (288ct\sqrt{\log t})^{2/(2k+1)}$,
which contradicts the definition of $r$. 
Hence $G$ is $4r$-degenerate and
$\chi(G)\leq 4r+1
\in O( (t\sqrt{\log t})^{2/(2k+1)})$. 
\end{proof}

For example, \cref{KtMinorFreeGirthColour}  shows $\chi(G)\leq O( (t\sqrt{\log t})^{2/3})$ for girth $5$, and $\chi(G)\leq O( (t\sqrt{\log t})^{2/5})$ for girth $9$. 

For clustered colouring, we show that two colours suffice if we allow the smallest possible amount of clustering.

\begin{thm}
\label{GirthLogt}
Every $K_t$-minor-free graph with girth at least $(16+o(1))\log_2t$ is 2-colourable with clustering  2.
\end{thm}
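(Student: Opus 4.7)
The plan is to use the following reformulation: a graph $G$ is 2-colourable with clustering at most $2$ if and only if $G$ contains a matching $M \subseteq E(G)$ such that $G - M$ is bipartite. For the forward direction, clustering $2$ means every vertex has monochromatic degree at most $1$, so the set of monochromatic edges forms a matching $M$, and $G - M$ is properly 2-coloured and hence bipartite. For the converse, given such a matching $M$, 2-colour $G - M$ via its bipartition; the only monochromatic edges of $G$ then lie in $M$, and since $M$ is a matching, each monochromatic component of $G$ has at most $2$ vertices.

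It then remains to produce such a matching $M$ under the theorem's hypotheses. First, I would show, by essentially the same argument as in \cref{KtMinorGirth3Colours} via \cref{MinDegree3Girth} with $r = 3$, that $G$ is 2-degenerate; this already follows from the weaker girth bound $(4+o(1))\log_2 t$. Next, using the extra girth available up to $(16+o(1))\log_2 t$, I would prove a structural claim about the odd cycles of $G$: from each odd cycle one can choose a ``breaker'' edge so that the chosen edges are pairwise non-adjacent. Each odd cycle has length at least the girth, so it has ``middle'' edges far from any shared vertex with another cycle, and a careful choice of one such middle edge per odd cycle yields the desired matching.

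Once such a matching $M$ is in hand, $G - M$ has no odd cycles and is therefore bipartite; the reformulation then completes the proof.

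The main obstacle is the structural step producing the matching of breakers. Odd cycles in a 2-degenerate high-girth graph may share vertices — for instance, several long odd cycles can pass through a common high-degree vertex — so simply picking an arbitrary edge from each odd cycle need not give a matching. Making the ``middle edge'' heuristic precise and quantitative is the crux of the argument, and justifying the factor of $16$ in the girth bound likely requires a second application of \cref{MinDegree3Girth} to a contracted minor capturing the interaction between two nearby odd cycles, where the remaining girth is just enough to force a $K_t$-minor and hence a contradiction.
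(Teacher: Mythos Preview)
Your reformulation is correct: $G$ is 2-colourable with clustering $2$ if and only if some matching $M$ meets every odd cycle of $G$. But from that point on you do not have a proof --- you have a heuristic. The ``breaker edge'' idea faces a real obstruction: a $K_t$-minor-free graph of large girth can still have a great many odd cycles, overlapping in complicated ways, and there is no mechanism in your sketch that forces the chosen breakers to be pairwise non-adjacent, or even that a single matching can hit all odd cycles simultaneously. The vague appeal to a ``second application of \cref{MinDegree3Girth} to a contracted minor capturing the interaction between two nearby odd cycles'' is not an argument; you would need to show that \emph{whenever} no suitable matching exists, some explicit configuration arises from which a $K_t$-minor can be extracted, and nothing in your outline does this.

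The paper takes a completely different and much shorter route that never reasons about odd cycles globally. It proves the statement by induction on $|V(G)|$ using two local reductions: (i) delete any vertex of degree at most $1$ and extend the colouring; (ii) delete any edge $vw$ with $\deg(v)=\deg(w)=2$ and extend the colouring (colour $v$ unlike its other neighbour, and $w$ unlike its other neighbour; the only possible new monochromatic component is $\{v,w\}$). If neither reduction applies, then $G$ has minimum degree at least $2$ and no edge joins two degree-$2$ vertices. A Mader-type lemma (\cref{Mader}) then shows that such a graph with girth at least $8k+3$ has a minor of minimum degree at least $2^{k/2}$; combined with \cref{KtMinorExtremal} this contradicts $K_t$-minor-freeness once the girth is at least $(16+o(1))\log_2 t$. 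The factor $16$ comes directly from this lemma ($8k$ in the girth versus $2^{k/2}$ in the minor degree), not from any two-cycle interaction. If you want to salvage your approach, you would need an entirely new structural lemma guaranteeing the matching; the paper's argument shows that no such lemma is needed.
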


\begin{proof}
The precise girth lower bound we use is $16\log_2(ct\sqrt{\log t})+11$, where $c$ is from \cref{KtMinorExtremal}. We proceed by induction on $|V(G)|$. First suppose that $\deg_G(v)\leq 1$ for some vertex $v$. By induction,
$G-v$ is 2-colourable with clustering at most 2. Colour $v$ by a colour not used by its neighbour. So 
$G$ is 2-coloured with clustering at most 2. Now assume that $G$ has minimum degree at least 2. Suppose that $G$ has an edge $vw$ with $\deg_G(v)=\deg_G(w)=2$. By induction, $G-v-w$ is 2-colourable with clustering at most 2. Colour $v$ by a colour not used by its neighbour different from $w$. Colour $w$ by a colour not used by its neighbour different from $v$. So the monochromatic component containing $v$ or $w$ has at most two vertices (namely, $v$ and $w$). Now  $G$ is 2-coloured with clustering at most 2. Now assume that for each edge $vw$ of $G$, at least one of $v$ and $w$ has degree at least 3. Let $g$ be the integer such that $G$ has girth in $\{8g+3,\dots,8g+10\}$. By \cref{Mader} below, there is a minor $G'$ of $G$ with minimum degree at least $2^{g/2}$. By \cref{KtMinorExtremal}, we have 
$2^{g/2}\leq ct\sqrt{\log t}$.
Hence $G$ has girth at most 
$8g+10\leq 10+16\log_2( ct\sqrt{\log t})$, which contradicts the starting assumption.
\end{proof}

It remains to prove the following lemma, which is a variant of the above-mentioned result of \citet{Mader98}. 

\begin{lem}
\label{Mader}
Let $G$ be a graph with:
\begin{itemize}
    \item girth at least $8k+3$ for some integer $k\geq 1$,
    \item minimum degree at least 2, and
    \item no edge $vw$ with $\deg_G(v)=\deg_G(w)=2$.
\end{itemize}
Then $G$ has a minor with minimum degree at least $2^{k/2}$. \end{lem}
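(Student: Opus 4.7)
The plan is to reduce to the minimum-degree-at-least-$3$ case by suppressing all degree-$2$ vertices, and then to apply the K\"uhn--Osthus bound \cref{MinDegree3Girth}.

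Obtain $G'$ from $G$ by suppressing every degree-$2$ vertex: by hypothesis the degree-$2$ vertices of $G$ form an independent set, so each such vertex $v$ has two distinct neighbours $x,y$, both of degree at least $3$; I delete $v$ and add the edge $xy$. A loop or a multi-edge produced by this operation would witness a cycle of length at most $4$ in $G$, contradicting the girth lower bound $8k+3\ge 11$, so $G'$ is simple. Each non-suppressed vertex retains its $G$-degree, so $G'$ has minimum degree at least $3$. Moreover, every cycle of length $\ell'$ in $G'$ lifts to a cycle of length between $\ell'$ and $2\ell'$ in $G$ (each edge of $G'$ expands back to a path of length $1$ or $2$), so $G'$ has girth at least $\lceil(8k+3)/2\rceil = 4k+2$. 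Note that $G'$ is itself a minor of $G$, so it suffices to find a minor of $G'$ with minimum degree at least $2^{k/2}$.

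The key step is to apply \cref{MinDegree3Girth} to $G'$ with $r=3$ and with the lemma's parameter $k$ replaced by $k-1$ (valid for $k\ge 2$): the required girth bound $4(k-1)+3=4k-1$ is at most $4k+2$, so the conclusion yields a minor of $G'$ (hence of $G$) with minimum degree at least $\tfrac{1}{48}\cdot 2^{k}$. Since $2^{k}/48\ge 2^{k/2}$ precisely when $2^{k/2}\ge 48$, this finishes the proof for $k\ge 12$. The remaining small values need separate attention: for $k\in\{1,2\}$ the graph $G$ itself has minimum degree $\ge 2\ge 2^{k/2}$; for $k=3$ the graph $G'$ has minimum degree $\ge 3\ge 2^{3/2}$; and for $4\le k\le 11$ the finitely many remaining cases can be dispatched directly, for instance by \cref{MinDegree3Girth} applied to $G'$ with a better-chosen parameter, or by invoking \citet{Mader98}'s original theorem on $G'$ (whose girth $\ge 4k+2$ is ample for these small $k$).

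The main obstacle, and the reason the exponent drops from $k$ (as in Mader's original conclusion) to $k/2$ here, is the girth-halving incurred by suppressing degree-$2$ vertices. Once this reduction is in place the result is essentially a direct appeal to \cref{MinDegree3Girth}; the only delicate point is that the constant $\tfrac{1}{48}$ in that lemma means the inequality $2^{k}/48\ge 2^{k/2}$ only activates for $k\ge 12$, forcing us to treat the handful of small cases separately.
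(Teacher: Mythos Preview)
Your reduction idea—suppress the degree-$2$ vertices to obtain a simple graph $G'$ of minimum degree at least $3$ and girth at least $4k+2$, then invoke known girth-versus-minor results—is a natural strategy and genuinely different from the paper's proof. The paper gives a self-contained ball-contraction argument instead: it takes a maximal set $X$ of vertices at pairwise distance at least $2k+1$, observes that each radius-$k$ ball $B_x$ induces a tree which, thanks to the hypotheses (minimum degree $2$ and no edge joining two degree-$2$ vertices), has at least $2^{k/2}$ leaves; extending these balls to a partition of $V(G)$ and contracting each part yields the minor directly. That argument handles every $k\ge 1$ uniformly, with no multiplicative constant to absorb.

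Your argument, by contrast, has a genuine gap for the intermediate values $4\le k\le 11$, and your hand-wave does not close it. After the girth halving, $G'$ has girth only $\ge 4k+2$, and neither \cref{MinDegree3Girth} nor Mader's theorem (as quoted in the paper: girth $\ge 8m+3$ yields a minor of minimum degree $2^m$) is strong enough. For a concrete failure: take $k=8$, so you need a minor of minimum degree at least $2^4=16$. Here $G'$ has girth $\ge 34$; applying \cref{MinDegree3Girth} with parameter $7$ gives only $2^8/48\approx 5.3$, and Mader's result with parameter $3$ (requiring girth $\ge 27$) gives only $2^3=8$. The same shortfall occurs for every $k\in\{6,\dots,11\}$, so the claim that these cases ``can be dispatched directly'' by one of the two cited results is false. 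Your reduction does yield the lemma for all sufficiently large $k$, which is in fact all that the paper's application in \cref{GirthLogt} needs, but it does not prove the lemma as stated.
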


\begin{proof}
Let $X$ be a maximal set of vertices in $G$ at pairwise distance at least $2k+1$. For each $x\in X$, let $B_x$ be the ball of radius $k$ around $x$. So $B_x\cap B_y=\emptyset$ for all distinct $x,y\in X$. Since $G$ has girth greater than $2k+1$, the induced subgraph $G[B_x]$ is a tree. By assumption, $G$ has minimum degree at least 2, and no edge $vw$ in $G$ has $\deg_G(v)=\deg_G(w)=2$. Thus $G[B_x]$ has at least  $2^{k/2}$ leaves. Let $V_i:=\{v\in V(G):\dist_G(v,X)=i\}$. By the maximality of $X$, we have $V(G)=\bigcup_{i=0}^{2k} V_i$. For each $x\in X$, let $B'_x$ be a superset of $B_x$ obtained as follows. For $i=k+1,k+2,\dots,2k$ each vertex $v\in V_i$ has a neighbour in $V_{i-1}\cap B'_x$ for some $x\in X$; add $v$ to $B'_x$. Thus $B'_x$ is a subset of the ball of radius $2k$ around $x$. Since $G$ has girth greater than $4k+1$, the induced subgraph $G[B'_x]$ is a (connected) tree. By construction,  $\bigcup_{x\in X}B'_x=V(G)$. Moreover, the number of edges between $B'_x$ and $\bigcup_{y\neq x}B'_y$ is at least the number of leaves in $G[B_x]$ which is at least $2^{k/2}$. For all distinct $x,y\in X$ there is at most one edge between $B'_x$ and $B'_y$, since otherwise $G$ would contain a cycle of length at most $8k+2$. Let $G'$ be obtained from $G$ by contracting each set $B'_x$ to a single vertex. Thus $G'$ has minimum degree at least $2^{k/2}$.
\end{proof}

\section{Open Problems}

Closing the gaps in the lower and upper bounds in \cref{TableTW,TableMinor} are natural open problems that arise from this work. In particular:

(1) For the class of triangle-free graphs with treedepth $k$, \cref{KpFreeTredepthDefClusPropEqual} says that the defective chromatic number,  clustered chromatic number and chromatic number are equal. But what is this value?  The best known lower bound is $(1-o(1))\sqrt{2k}$ (see \cref{TriangleFreeTreedepthProperLowerBound}).
\cref{DK} provides an upper bound of $\ceil{\frac{k+1}{2}}$. A $o(k)$ upper bound can be obtained as follows. 
\citet{DK17} showed that the maximum chromatic number of triangle-free graphs with treewidth $k$ equals the maximum online chromatic number of triangle-free graphs with pathwidth $k$. An analogous approach shows that the maximum chromatic number of triangle-free graphs of treedepth $k$ equals the maximum online chromatic number of triangle-free graphs on $k$ vertices. This result holds with the triangle-free condition replaced by $K_p$-free for any fixed $p$. Results of \citet{LST89} imply\footnote{\citet{LST89} describe an online algorithm that partitions an $n$-vertex graph into $\ceil{n/\log_2\log_2n}$ independent sets and at most $4n/\log_2\log_2n$ `residual' sets, where each residual set is a subset of the neighbourhood of a vertex. So for triangle-free graphs, each residual set is independent. Hence the online chromatic number of $n$-vertex triangle-free graphs is in $O(n / \log \log n)$. See \citep{GKMZ14} for related work.} that the online chromatic number of triangle-free graphs on $k$ vertices is in  $O(k/\log \log k)$. Thus, triangle-free graphs of treedepth $k$ are $O(k/\log \log k)$-colourable. As far as we are aware, it is possible that triangle-free graphs with treedepth $k$ are properly $O(\sqrt{k})$-colourable. This would strengthen the fact that triangle-free $k$-vertex graphs are properly $O(\sqrt{k})$-colourable, which is easily proved as a consequence of Brooks' Theorem.

%Here is a proof. If $\Delta(G)\leq \sqrt{k}$ then we are done by Brooks Theorem. Otherwise, there is a vertex $v$ with degree at least $\sqrt{k}$. Colour $G-N_G[v]$ by induction, use one new colour for $N_G(v)$ and re-use one colour for $v$. We need to generalise this idea  for triangle-free treedepth $k$ graphs. This argument can actually show $\chi(G)\in O(\sqrt{k/\log k})$, using the $O(\Delta/\log\Delta)$ bound for triangle-free graphs of maximum degree $\Delta$. }

(2) Are triangle-free $(\gamma,k)$-apex graphs 3-colourable with bounded clustering? Note that $K_{k+2,n}$ is a triangle-free $k$-apex graph with $(k+2)n$ edges. So \cref{SSS} does not help here. 

(3) Are graphs with girth at least some constant $g$ (possibly 7) and with bounded treewidth (or bounded pathwidth) 2-colourable with bounded clustering? The answer is `yes' for  graphs $G$ with treewidth~2 and girth at least 5, since such graphs are planar, so $|E(G)| < \frac53 |V(G)|$ by Euler's Formula, and the claim follows from \cref{SSS}. Perhaps \cref{SSS} can be used to answer the treewidth~3 case. But for larger values of treewidth, it seems a more tailored approach is needed. Note that $g\geq 7$ by \cref{FeedbackLowerBound}.

(4) Are $K_t$-minor-free graphs with girth at least some constant $g$ (possibly 5) 3-colourable with bounded clustering? Note that $g\geq 5$ by \cref{TriangleFreeTreedepth}. In fact, two colours may suffice. Are $K_t$-minor-free graphs with girth at least some constant $g$ (possibly 7) 2-colourable with bounded clustering? Here $g\geq 7$ by \cref{FeedbackLowerBound}.

(5) What is the clustered chromatic number of the class of triangle-free $K_t$-minor-free graphs. Without the triangle-free assumption, the answer is $t-1$, as proved by \citet{DEMW23} (and announced earlier by \citet{DN17}). The best lower bound is $\ceil{\frac{t+1}{2}}$, which follows from \cref{TriangleFreeTreewidthDefectiveClusteredProper} since graphs of treewidth $k$ are $K_{k+2}$-minor-free. 

Note that for proper colourings of $K_t$-minor-free graphs, improved bounds are known when we additionally exclude a fixed triangle or fixed complete graph. In particular, \citet{DP25} proved a $O(t)$ upper bound for $K_t$-minor-free graphs excluding any fixed complete subgraph.

{\fontsize{10pt}{11pt}
\selectfont
%\bibliographystyle{DavidNatbibStyle}
%\bibliography{DavidBibliography}
\def\soft#1{\leavevmode\setbox0=\hbox{h}\dimen7=\ht0\advance \dimen7
  by-1ex\relax\if t#1\relax\rlap{\raise.6\dimen7
  \hbox{\kern.3ex\char'47}}#1\relax\else\if T#1\relax
  \rlap{\raise.5\dimen7\hbox{\kern1.3ex\char'47}}#1\relax \else\if
  d#1\relax\rlap{\raise.5\dimen7\hbox{\kern.9ex \char'47}}#1\relax\else\if
  D#1\relax\rlap{\raise.5\dimen7 \hbox{\kern1.4ex\char'47}}#1\relax\else\if
  l#1\relax \rlap{\raise.5\dimen7\hbox{\kern.4ex\char'47}}#1\relax \else\if
  L#1\relax\rlap{\raise.5\dimen7\hbox{\kern.7ex
  \char'47}}#1\relax\else\message{accent \string\soft \space #1 not
  defined!}#1\relax\fi\fi\fi\fi\fi\fi}

}
\end{document}